\documentclass[a4paper, 12pt, reqno]{amsart} 

\usepackage{amsmath,amsfonts,amssymb,amsthm}
\usepackage{enumerate,enumitem}

\usepackage{geometry}
\geometry{
	hmargin={25mm, 25mm}, 
	vmargin={25mm, 25mm},
	headsep=10mm,
	headheight=5mm,
	footskip=10mm
}

%\setlength{\parskip}{4pt}
%\linespread{1.1}

\usepackage[utf8]{inputenc}
\usepackage[alphabetic,nobysame,abbrev]{amsrefs}
\newcommand{\citeref}[2]{\cite{#1}*{#2}}

\usepackage[hidelinks]{hyperref} %remove option hidelinks to make the internal links surrounded by a rectangle

% for easy preparation of the PDF - just comment drafttrue and uncomment draftfalse
% all ifdraft blocks should be removed for submission of the source file
\newif\ifdraft
%\drafttrue %uncomment this for the draft version
\draftfalse %uncomment this for the final version (arxiv or journal)

\newif\ifarxiv
\arxivtrue %this includes parts that should occur only in the arXiv version 
%\arxivfalse %uncomment this in the journal version

\ifdraft % only for debugging
  \usepackage[notcite,notref]{showkeys}

\fi % only for debugging

\newcommand{\eq}[1]{\begin{equation}\label{eq:#1}}
\newcommand{\eqe}{\end{equation}}

\newcommand\eps{\varepsilon}

\newcommand\E{{\mathbb E}}
\newcommand\Ec[2]{{\E\left[#1\,\middle|\,#2\right]}}

\newcommand\Var{\operatorname{Var}}

\newcommand\Cov{\operatorname{Cov}}

\renewcommand\Pr{{\mathbb P}}
\newcommand\prob[1]{\Pr\left(#1\right)}
\newcommand\Pc[2]{{\Pr\left(#1\,\middle|\,#2\right)}}

\newcommand\pto{\overset{\mathrm{p}}{\to}}

\newcommand\floor[1]{\lfloor #1 \rfloor}
\newcommand\ceil[1]{\lceil #1 \rceil}

\newcommand\bigcpar[1]{\bigl\{#1\bigr\}}
\newcommand\Bigcpar[1]{\Bigl\{#1\Bigr\}}
\newcommand\biggcpar[1]{\biggl\{#1\biggr\}}

\newcommand\Bigsqpar[1]{\Bigl[#1\Bigr]}
\newcommand\bigpar[1]{\bigl(#1\bigr)}
\newcommand\Bigpar[1]{\Bigl(#1\Bigr)}
\newcommand\biggpar[1]{\biggl(#1\biggr)}

\newcommand\Bin{\operatorname{Bin}}

\newcommand{\RR}{\mathbb{R}}

\newcommand{\G}{{\mathbb G}}
\newcommand{\Gnp}{{\mathbb G}_{n,p}} % \newcommand{\Gnp}{\G(n,p)}
\newcommand\bG{{\mathbb G}}

\newcommand\cB{\mathcal{B}}

\newcommand\cE{\mathcal{E}}

\newcommand\cG{{\mathcal G}}

\newcommand\cT{{\mathcal T}}
\newcommand\cU{\mathcal{U}}

%\newcommand\whp{w.h.p\punkt}
 % LW: looks nicer

\newcommand{\e}{{\mathrm{e}}} 

\newcommand{\xx}{{\mathbf x}}

\newcommand{\aut}{\operatorname{aut}}

 %number of ordered sets of cardinality #1
 %formerly this was [#1]_{#2}
\newcommand{\randset}[2]{[#1]_{#2}} %#2-random subset of [#1]
\newcommand{\neighs}{\randset{n}{p}} %formerly this was N
\newcommand{\neighsstar}{\randset{n^*}{p}} %formerly this was N^*

 %number of disjoint extensions in the strictly balanced proof
 %number of disjoint extensions in the strictly balanced proof

%\newcommand{\vr}[2]{{v(#1,#2)}}
%\newcommand{\er}[2]{{e(#1,#2)}}
 %notation for $v(G,H) = v_H - v_G$
 %notation for $e(G,H) = e_H - e_G$
 %outdegree in a rooted tree

\newcommand{\refT}[1]{Theorem~\ref{#1}}
\newcommand{\refC}[1]{Corollary~\ref{#1}}
\newcommand{\refL}[1]{Lemma~\ref{#1}}
\newcommand{\refR}[1]{Remark~\ref{#1}}
\newcommand{\refS}[1]{Section~\ref{#1}}

\newcommand{\refP}[1]{Proposition~\ref{#1}}

\newcommand{\refApp}[1]{Appendix~\ref{#1}}

%Commands for justifying steps in equations

\newcommand{\justify}[1]{\fbox{\tiny{#1}}\quad}

\newtheorem{theorem}{Theorem}
\newtheorem{fact}[theorem]{Fact}

\newtheorem{proposition}[theorem]{Proposition}
\newtheorem{corollary}[theorem]{Corollary}
\newtheorem{lemma}[theorem]{Lemma}

\newtheorem{remark}{Remark}

\newtheorem{problem}{Problem}

%collaboration commands, can be removed in final version
\usepackage{xcolor} % for command \textcolor
\ifdraft
\newcommand{\todo}[1]{\textcolor{red}{TODO: #1}}
\newcommand{\remM}[1]{\textcolor{teal}{(M\v{S}: #1)}}
\newcommand{\remS}[1]{\textcolor{magenta}{(SG: #1)}}
\newcommand{\remP}[1]{\textcolor{blue}{(PA: #1)}}
\newcommand{\remL}[1]{\textcolor{orange}{(LW: #1)}}
\else 
\newcommand{\todo}[1]{}
\newcommand{\remM}[1]{}
\newcommand{\remS}[1]{}
\newcommand{\remP}[1]{}
\newcommand{\remL}[1]{}
\fi

 % Make Blbliography more compact
\let\OLDthebibliography\thebibliography
\renewcommand\thebibliography[1]{
  \OLDthebibliography{#1}
  \setlength{\parskip}{0pt}
  \setlength{\itemsep}{0pt plus 0.3ex}
}

\title[Maximum tree extension counts in random graphs]{Extreme local statistics in random graphs:\\ maximum tree extension counts}
\author{Pedro Ara\'ujo}
\address{Department of Mathematics, Faculty of Nuclear Sciences and Physical
Engineering, Czech Technical University
in Prague, Trojanova 13, 120 00 Prague, Czechia. E-mail: {\tt pedro.araujo@cvut.cz}}
\author{Simon Griffiths}
\address{Departamento de Matem\'atica, PUC-Rio, Rua Marqu\^{e}s de S\~{a}o Vicente 225, G\'avea, 22451-900 Rio de Janeiro, Brasil. E-mail: {\tt simon@mat.puc-rio.br.} }
\author{Matas \v{S}ileikis}
\address{Institute of Computer Science of the Czech Academy of Sciences, Pod Vod\'{a}renskou v\v{e}\v{z}\'{\i} 2, 182 00 Prague, Czechia; Vilnius University, Faculty of Mathematics and
Informatics, Institute of Computer Science, Didlaukio g. 47, 08303 Vilnius, Lithuania. E-mail: {\tt matas@cs.cas.cz}.} 
\author{Lutz Warnke}
\address{Department of Mathematics, University of California San Diego, La Jolla CA~92093, USA. E-mail: {\tt lwarnke@ucsd.edu}.}

\thanks{PA and M\v{S} were supported by the Czech Science Foundation, project No. 20-27757Y, with institutional support RVO:67985807. Work was done while PA was affiliated with Institute of Computer Science of the Czech Academy of Sciences.  SG was supported by FAPERJ (Proc.~201.194/2022) and by CNPq (Proc.~307521/2019-2).  LW was supported by NSF~CAREER grant~DMS-2225631 and a Sloan Research Fellowship.}
\date{October~27, 2023; revised January~16, 2026}

\begin{document}

\begin{abstract}
We consider maximum rooted tree extension counts in random graphs, i.e., we consider~$M_n = \max_v X_v$ where~$X_v$ counts the number of copies of a given tree in~$\Gnp$ rooted at vertex~$v$. 
We determine the asymptotics of~$M_n$  when the random graph is not too sparse, specifically when the edge probability~$p=p(n)$ satisfies~${p(1-p)n \gg \log n}$.  
The problem is more difficult in the sparser regime ${1 \ll pn\ll \log{n}}$, where we determine the asymptotics of~$M_n$ for specific classes of trees. 
Interestingly, here our large deviation type optimization arguments reveal that the behavior of~$M_n$ changes as we vary~${p=p(n)}$, due to different mechanisms that can make the maximum~large. 
\end{abstract}
\maketitle
 
\ifdraft
 \tableofcontents
 \fi

  \section{Introduction}
In extreme value theory, the distribution of the maximum~$\max_{i \in [n]} X_i$ of~$n$ many i.i.d.\ random variables is a classical topic~\cites{LLR1983, EKM1997, BGTS2004}. 
In this paper we study a variant of this problem from random graph theory, where the relevant random \mbox{variables~${X_1, \ldots, X_n}$} are not independent (and may also depend on~$n$). 
Interestingly, for the binomial random graph~$\Gnp$ we (i)~discover that the qualitative behavior of the relevant maximum changes as we vary the edge probability~$p=p(n)$, 
and (ii)~identify the different underlying mechanisms that can make the corresponding maximum large.

The oldest extreme value problem 
in random graph theory concerns the maximum vertex~degree~${\Delta = \Delta(\Gnp)}$ over all~$n$ vertices of the binomial random graph $\Gnp$. 
Indeed, already in the 1960s it was folklore that for most edge probabilities~$p=p(n)$ of interest the maximum degree~$\Delta$ is concentrated around the mean degree $pn$, 
or more formally~that
\begin{equation*}
 \frac{\Delta}{pn} \,  \pto\,  1 \,,
\end{equation*} 
where~$\pto$ denotes convergence in probability. 
In the 1970s and 1980s the \mbox{asymptotic} behavior of the centered random variable~$\Delta-pn$ was established for many edge probabilities~$p=p(n)$ of interest, 
yielding in particular that, for\footnote{see Subsection~\ref{ss:asympt} for asymptotic notation}~$p(1-p) n \gg \log n$,
\begin{equation}\label{eq:deg:center:pto}
\frac{\Delta-pn}{\sqrt{2p(1-p)n\log n}} \,  \pto\,  1 \,,
\end{equation} 
where henceforth $\log$ stands for the natural logarithm. 
See~\cites{Ivchenko73,B80,Bollobas01} for more details about the maximum~degree~$\Delta$.

In this paper we investigate perhaps the simplest generalization of the maximum degree that is not well understood for the binomial random graph~$\Gnp$. Namely, given a rooted tree~$T$, for each vertex~$v \in [n]$ we define a local statistic~$X_v = X_{T,v}$ (deferring the precise definition for a while) that counts copies of~$T$ rooted at~$v$, 
write~$\mu_T:=\E X_{v}$ for the mean (which by symmetry does not depend on the choice of~$v$), 
and then study the 
maximum rooted tree extension~count
\begin{equation*}
M_n = M_{T,n} \, :=\, \max_{v \in [n]} X_{v} 
\end{equation*}
taken over all~$n$ vertices of the binomial random graph~$\Gnp$.
This includes the maximum degree problem as the simplest case where~$T$ is an edge, rooted at one endvertex. 
Conceptually, the main difficulty is that~$M_n$ is an extreme order statistic of~$n$ random variables~$X_v$ that are not independent (and their distribution depends on~$n$). 
As we shall discuss in \mbox{Sections~\ref{sec:res1}--\ref{sec:res2}}, the main results of this paper answer the following fundamental extreme value theory questions concerning~$M_n$, 
which are 
also interesting random graph theory questions in their own~right:
\begin{enumerate}[leftmargin=2.5em]
\item[(i)] For what range of edge probabilities~$p=p(n)$ does $M_n/\mu_T\pto 1$ hold, i.e., is the maximum~$M_n$ concentrated around the mean extension count~$\mu_T$?
\item[(ii)] When~$M_n/\mu_T\pto 1$ holds, then what is the correct asymptotic expression for the centered random variable $M_n-\mu_T$, i.e., how much does the maximum~$M_n$ typically deviate from the mean extension count~$\mu_T$?
\item[(iii)] When~$M_n/\mu_T\pto 1$ fails, then what is the asymptotic behavior of~$M_n$, i.e., can we identify a sequence~$\alpha_n$ such that~$M_n/\alpha_n\pto 1$ holds for certain trees? 
\end{enumerate}

We now provide some definitions omitted so far. 
Let~$\Gnp$ denote the random graph with vertex set $[n] := \{1, \dots, n\}$, 
in which each of the~$\binom{n}{2}$ possible edges is included independently with probability~$p=p(n)$;  see~\cites{G1959,ER60,Bollobas01,JLRbook}. 
Given a rooted graph~$H$ with root~$\rho$, we consider the maximum number of copies of~$H$ `rooted' at a vertex~${v\in [n]}$, counted with multiplicity\footnote{Using this convention we end up counting each copy of $H$ exactly~$\aut(H)$ times, where $\aut(H)$ is the number of automorphisms of rooted graph~$H$ which fix the~root.} to simplify the formulas. 
For each vertex~$v\in [n]$ we denote by~${X_v=X_{H,v}}$ the number of $H$-\emph{extensions of~$v$} in~$\Gnp$, i.e., $X_v$ denotes the number of injective functions~${\phi:V(H)\to [n]}$ such that~${\phi(\rho)=v}$ and~${\phi(u)\phi(w)\in E(\Gnp)}$ for every edge~${uw\in E(H)}$. 
Note that if~$H$ has~$v_H$ vertices and~$e_H$ edges, then\footnote{We henceforth denote by~$(n)_k$ the falling factorial $(n)_k=n(n-1)\dots (n-k+1)$, as usual.} 
\[
{\mu_H = \E X_v =(n-1)_{v_H-1}\, p^{e_H}} \sim n^{v_{H}-1}p^{e_H}.
\]
(Note that~$e_T=v_T-1$ for any tree~$T$.)

\enlargethispage{\baselineskip} 

Maximum tree extension counts are a natural generalization of the maximum degree, 
but there is important
further motivation 
from random graph theory. 
Indeed, a rooted graph~$H$ is called \textit{balanced} if, among all subgraphs~${F\subseteq H}$ containing the root vertex~$\rho$ and at least one edge, the maximizers of~${e_F/(v_F-1)}$ include~$H$ itself. 
Similarly, $H$ is called \textit{strictly balanced} if~${F=H}$ is the only maximizer (see \citeref{JLRbook}{Section 3.4}). 
As usual for such definitions, the first step is to study the maximum extension count~${M_n=M_{H,n}}$ for strictly balanced~$H$, which was successfully done in~\cites{S90,SW19}; see~\refS{eq:background} for more details.
To gain further insight into extension counts, 
the natural next step is to consider balanced~$H$, for which even the answer to question~(i) is only known for sufficiently large~$p=p(n)$; see~\cites{S90,SW19}.  
In this paper we address this fundamental  knowledge gap by studying questions~(i)~to~(iii) for the class of rooted trees, 
which are balanced\footnote{A rooted tree~$T$ is balanced but not strictly balanced, because ${e_F/(v_F-1)} = 1$ for all connected subgraphs~$F \subseteq T$ (since these are again trees).} but not strictly balanced and thus sit at the boundary between the~known and the~unknown.

\subsection{How and when does \texorpdfstring{$M_n$}{M\_n} concentrate around the mean?}\label{sec:res1}
In this section we answer the closely related questions~(i) and~(ii) concerning the case when the maximum tree extension count~$M_n$ is concentrated at the mean. 
In particular, we establish that~${M_n/\mu_T\pto 1}$ holds when~${p(1-p) n \gg \log n}$, 
in which case we also obtain the correct asymptotic expression for~${M_n-\mu_T}$ (see \refT{thm:max_ext}) 
and show that the maximum degree essentially determines the behavior of~$M_n$ (see Proposition~\ref{prop:degtoext}).
Furthermore, we demonstrate that the aforementioned range of edge probabilities~${p=p(n)}$ is best possible, since~${M_n/\mu_T\pto 1}$ fails when~$pn = \Theta(\log n)$ holds (see \refC{cor:breaks}). 

Turning to the details, set~$q:=1-p$ for brevity. 
As discussed, in the `dense' case $pqn\gg \log{n}$ the maximum degree $\Delta=\Delta(\Gnp)$ is known to satisfy the asymptotic behavior~\eqref{eq:deg:center:pto}; see also Proposition~\ref{prop:max_deg_dense}. 
Since the degree variance is asymptotically~$pqn$, in concrete words the  limit~\eqref{eq:deg:center:pto} says that `the maximum degree typically deviates from the mean degree by~$\sqrt{2 \log n}$ standard deviations'.
Our first main result shows that this strong concentration assertion in fact holds for all maximum rooted tree extension~counts. 
\begin{theorem}[Maximum for general trees, dense case] 
\label{thm:max_ext} 
Fix a rooted tree~$T$, with root degree~$a$. 
If~$pqn \gg \log n$, then 
    \begin{equation}
      \label{eq:LLNvariance}
      \frac{M_n\, -\, \mu_T}{\sigma_T \sqrt{2\log n} }\, \pto\, 1\,,
    \end{equation}
where the variance~$\sigma_T^2:=\Var X_v$ satisfies~$\sigma_T^2 \sim a^2\mu_T^2 q/(pn)$. 
\end{theorem}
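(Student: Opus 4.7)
The plan is to leverage two simpler ingredients that the paper has set up earlier: \refP{prop:max_deg_dense} giving the asymptotics of the maximum degree~$\Delta$, and \refP{prop:degtoext}, which (as its name suggests) should assert that, uniformly over~$v \in [n]$, the extension count~$X_v$ is close to a smooth function of the degree~$d_v$. With these in hand the theorem reduces to a short Taylor expansion.

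First I would verify the claimed asymptotic variance~$\sigma_T^2 \sim a^2 \mu_T^2 q / (pn)$. Decompose the rooted tree~$T$ into its root~$\rho$ and the $a$~subtrees~$T_1,\dots,T_a$ hanging off~$\rho$, and condition on the neighbourhood~$N(v)$ of~$v$. Writing $X_v = \sum Y(u_1,\dots,u_a)$, summed over ordered $a$-tuples of distinct neighbours of~$v$ (with~$Y$ counting injective disjoint extensions of~$T_1,\dots,T_a$ rooted at~$u_1,\dots,u_a$), the conditional mean is $(1+o(1))(d_v)_a \mu_F$ with $\mu_F := \mu_T/(pn)^a$. The law of total variance yields
$$
\sigma_T^2 \, = \, \Var\bigl[\Ec{X_v}{N(v)}\bigr] \, + \, \E\bigl[\Varc{X_v}{N(v)}\bigr].
$$
A direct second-moment computation shows the first term dominates: since $\Var[(d_v)_a] \sim a^2 (pn)^{2a-2} pqn$, one obtains $\mu_F^2 \Var[(d_v)_a] \sim a^2 \mu_T^2 q/(pn)$, while the conditional-variance term is of smaller order because, for fixed~$N(v)$, the remaining randomness affects deeper levels of the tree, whose relative fluctuations in this dense regime are much smaller than those already produced by the degree~$d_v$.

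With the variance formula in hand, by \refP{prop:degtoext} we have whp that every~$v \in [n]$ satisfies $X_v = \mu_T(d_v/(pn))^a + o(\sigma_T \sqrt{\log n})$. Since the regime~$pqn\gg\log n$ makes $(\Delta - pn)/(pn) = o(1)$ by~\eqref{eq:deg:center:pto}, Taylor expansion yields
$$
\bigl(\tfrac{d_v}{pn}\bigr)^a \, = \, 1 \, + \, a\,\tfrac{d_v - pn}{pn} \, + \, O\bigl(((d_v - pn)/(pn))^2\bigr).
$$
Taking the maximum over~$v$ and using $\Delta - pn = (1+o(1))\sqrt{2pqn\log n}$ whp then gives
$$
M_n \, - \, \mu_T \, = \, (1+o(1)) \cdot \tfrac{a \mu_T}{pn}(\Delta - pn) \, + \, o(\sigma_T \sqrt{\log n}) \, = \, (1+o(1)) \, a \, \mu_T \sqrt{\tfrac{2q\log n}{pn}},
$$
which equals $(1+o(1))\sigma_T\sqrt{2\log n}$ by the variance formula; here the lower bound comes from plugging in the argmax of~$d_v$, while the upper bound uses that the approximation is uniform over~$v$.

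The main obstacle is clearly \refP{prop:degtoext}, i.e., the approximation $X_v \approx \mu_T(d_v/(pn))^a$ with additive error $o(\sigma_T\sqrt{\log n})$ uniformly in~$v$. Since the target error is much smaller than~$\mu_T$ itself, this requires delicate conditional moment estimates (conditioning on the first neighbourhood of~$v$, or possibly a deeper ball around it), followed by concentration bounds strong enough to survive a union bound over all~$n$ vertices. The remaining ingredient, \refP{prop:max_deg_dense}, is classical and enters only through the scalar asymptotics of~$\Delta$.
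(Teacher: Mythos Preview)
Your proposal is correct and follows essentially the same route as the paper: cite Proposition~\ref{prop:degtoext} to replace $X_v$ by $d(v)^a(pn)^{e_T-a}$ uniformly in~$v$ with error $o(\sigma_T\sqrt{\log n})$, plug in the maximum-degree asymptotics from Proposition~\ref{prop:max_deg_dense}, and Taylor-expand $(d_v/(pn))^a$. The only cosmetic difference is that the paper obtains the variance asymptotics by citing a separate Proposition~\ref{prop:variance} (proved by a direct covariance expansion) rather than via your law-of-total-variance sketch.
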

En route to Theorem~\ref{thm:max_ext} we establish Proposition~\ref{prop:degtoext} below, which is stronger in two ways:
(a)~it allows us to understand the number of $T$-extensions of $v$ for \emph{every} vertex~$v$ as a simple function of the degree of~$v$, 
and (b)~it gives a smaller error. 
This in particular demonstrates that vertices of near maximum degree determine the asymptotic behavior~\eqref{eq:LLNvariance} of~$M_n$. 
We remark that in the regime $pq n \ge C_0\log n$ the technical condition $C_0 > 1$ implies that the minimum degree is of order~$pn$, which somewhat simplifies our arguments. 
\begin{proposition}\label{prop:degtoext}
Fix a rooted tree~$T$, with root of degree~$a$. 
For any constant~$C_0>1$ there exists~$C>0$ such that the following holds. 
If~$pq n \ge C_0\log n$, then whp\footnote{Henceforth \emph{whp} stands for \emph{with high probability}, i.e., with probability tending to $1$ as~$n \to \infty$.} 
\begin{equation}
\label{eq:Yt_conc}
\max_{v \in [n]}\big| X_{T,v}\, -\, d(v)^{a}(pn)^{e_T - a}\big|\, \le\,   C (pn)^{e_T-1}\sqrt{\log{n}}, 
\end{equation}
where~$d(v)$ denotes the degree of vertex $v$ in the random graph~$\Gnp$. 
\end{proposition}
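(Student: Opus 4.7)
The plan is to prove Proposition~\ref{prop:degtoext} by conditioning on the neighborhood~$N(v)$ of each vertex~$v$, reducing the claim to a concentration statement around the conditional mean. The key observation is that~$d(v)^a(pn)^{e_T-a}$ is, up to negligible error, precisely $\E[X_{T,v}\mid N(v)]$, so the bulk of the variability of~$X_{T,v}$ across vertices---coming from the fluctuations of the degree---is already absorbed into the main term.

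Before conditioning, a standard Chernoff bound followed by a union bound yields the uniform degree concentration $\max_v|d(v)-pn|\le C_1\sqrt{pn\log n}$ whp, and in particular $d(v)=\Theta(pn)$ for every~$v$ thanks to~$C_0>1$. Next, exploiting that the edges incident to~$v$ are independent of those of~$\Gnp[[n]\setminus\{v\}]$, I would count $T$-extensions of~$v$ by first choosing an ordered tuple $(w_1,\dots,w_a)\in N(v)^{(a)}$ of distinct children-images (with $(d(v))_a$ possibilities) and then injectively extending the remaining forest $T-\rho$ into $[n]\setminus\{v,w_1,\dots,w_a\}$ using its $e_T-a$ non-top edges. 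This yields the identity
\[
\E[X_{T,v}\mid N(v)]\;=\;(d(v))_a\cdot(n-1-a)_{v_T-1-a}\cdot p^{e_T-a}\;=\;d(v)^a(pn)^{e_T-a}+O\bigl((pn)^{e_T-1}\bigr),
\]
where the error from expanding the falling factorials is well within the target.

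It remains to establish the conditional concentration $|X_{T,v}-\E[X_{T,v}\mid N(v)]|\le C_2(pn)^{e_T-1}\sqrt{\log n}$ whp, uniformly over~$v$. Conditioned on~$N(v)$, the quantity $X_{T,v}$ is a polynomial of degree $e_T-a$ in the independent edge indicators of~$\Gnp[[n]\setminus\{v\}]$. A case analysis of pairs of extensions, organized by their edge-overlap pattern, should give the conditional variance bound $\Var[X_{T,v}\mid N(v)]=O(q(pn)^{2(e_T-1)})$. A polynomial concentration inequality (Kim--Vu, or a higher-moment Chebyshev computation) then yields deviations of order $\sqrt{\Var\cdot\log n}=O((pn)^{e_T-1}\sqrt{\log n})$ whp for each fixed~$v$, and since the per-vertex tail probability can be made polynomially small in~$n$, a union bound over $v\in[n]$ is easily absorbed.

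The main obstacle is the sharp conditional variance bound. The unconditional variance $\sigma_T^2\sim a^2q(pn)^{2e_T-1}$ is a factor of~$pn$ larger, and this excess comes entirely from pairs of $T$-extensions that share all $a$ top-level edges but differ below---the very pairs whose contribution is absorbed into~$d(v)^a$ in the main term. Once we condition on~$N(v)$, these dominant pairs contribute only to the conditional expectation, and the remaining (strictly sub-root) overlap patterns need to be shown to sum to the stated bound. Carrying out this overlap analysis, and propagating the resulting error estimates uniformly across all~$v$, is the key technical step.
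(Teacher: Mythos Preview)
Your conditioning idea and the computation of $\E[X_{T,v}\mid N(v)]$ are correct and coincide with what the paper does in its core case. The difficulty is the concentration step, where your proposal has a genuine gap.

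The claim that ``Kim--Vu, or a higher-moment Chebyshev computation, then yields deviations of order $\sqrt{\Var\cdot\log n}$ with polynomially small failure probability'' is not justified by either of those tools as stated. Kim--Vu gives deviations of order $(\log n)^{d}\sqrt{E_0E'}$ for a degree-$d$ polynomial; here $d=e_T-a$, so for any tree of height at least~$2$ you pick up at least one full $\log n$ factor, not the $\sqrt{\log n}$ you need. A moment argument could in principle succeed, but it would require proving that the $2k$-th central moment is at most $(C\sigma^2)^k k^k$ uniformly in $k$, which amounts to re-deriving sub-Gaussian behaviour for these specific polynomials --- a substantial task you have not sketched. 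A direct McDiarmid/bounded-differences attempt also fails for height $\ge 3$: after conditioning only on $N(v)$ all $\binom{n-1}{2}$ edge variables remain, the typical Lipschitz constant is $r\asymp (pn)^{e_T-2}$, and the variance proxy $Npr^2\asymp n(pn)^{2e_T-3}$ is a factor $n/(pn)$ too large.

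The paper resolves this by an induction on the height of~$T$. The height-$2$ case is the only place where concentration is proved from scratch: there, conditioning on $N(v)$ leaves only $N=O(|N(v)|\cdot n)=O(pn^2)$ relevant edge variables, and the paper applies Warnke's \emph{typical bounded differences} inequality, which combines the Lipschitz bound $r=O((pn)^{e_T-2})$ (valid on the high-probability event that all degrees are $O(pn)$) with this smaller $N$ to obtain the sharp sub-Gaussian tail. For height $h\ge 3$ one writes $Z_{T,v}=\sum_{w_1,\dots,w_a\in N(v)}\prod_i Z_{T_i,w_i}$, applies the induction hypothesis to each subtree $T_i$ (height $\le h-1$) and to the depth-$2$ truncation of $T$, and multiplies the resulting estimates. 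This decomposition is what lets the paper avoid ever facing the full $\binom{n-1}{2}$-variable concentration problem.
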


The following simple corollary of Proposition~\ref{prop:degtoext} (proved in Section~\ref{sec:proof_dense})  
shows that the maximum~$M_n$ 
is no longer concentrated around~$\mu_T$ when~$pn=\Theta(\log{n})$. 
\begin{corollary}
\label{cor:breaks}
Fix a rooted tree~$T$. 
There is a strictly decreasing function $f : (1, \infty) \to (1, \infty)$ 
such that the following holds. 
If~$pn \sim C \log n$ for some constant~$C > 1$, then
\begin{equation}\label{eq:cor:breaks}
    \frac{M_n}{\mu_T} \pto f(C) > 1 \,.
\end{equation}
\end{corollary}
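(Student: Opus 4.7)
The plan is to use Proposition~\ref{prop:degtoext} to reduce the behavior of $M_n$ to that of the maximum degree $\Delta := \max_{v \in [n]} d(v)$, and then to determine the asymptotics of $\Delta$ in the sparse regime $pn \sim C \log n$.

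\emph{Step 1 (Reduction to $\Delta$).} Since $pn \sim C \log n$ with $C > 1$ forces $p = o(1)$, we have $pqn \sim C \log n \ge C_0 \log n$ for any fixed $C_0 \in (1, C)$ and $n$ large enough. Applying Proposition~\ref{prop:degtoext} and taking the maximum over $v$ (using the monotonicity of $x \mapsto x^a$ on $[0,\infty)$), we obtain whp that
\[
\bigabs{M_n - \Delta^a (pn)^{e_T - a}} \le C'(pn)^{e_T - 1}\sqrt{\log n}
\]
for some constant $C' > 0$. Dividing by $\mu_T \sim (pn)^{e_T}$, and noting that $\sqrt{\log n}/(pn) = O(1/\sqrt{\log n}) = o(1)$, this yields
\[
\frac{M_n}{\mu_T} = (1 + o(1)) \left(\frac{\Delta}{pn}\right)^a + o(1) \quad \text{whp}.
\]

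\emph{Step 2 (Asymptotics of $\Delta$).} Next I would show that $\Delta/(pn) \pto \gamma(C)$, where $\gamma(C) > 1$ is the unique solution of $C \psi(\gamma) = 1$ with the Poisson rate function $\psi(\alpha) := \alpha \log \alpha - \alpha + 1$. Since each $d(v) \sim \Bin(n-1, p)$ with $p = o(1)$, standard Chernoff bounds give $\Pr(d(v) \ge \alpha pn) = n^{-C\psi(\alpha) + o(1)}$ for every fixed $\alpha > 1$. The upper bound $\Delta \le (\gamma(C) + \eps) pn$ whp then follows by a union bound over $v \in [n]$, since $\psi(\gamma(C) + \eps) > 1/C$. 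The matching lower bound uses the second moment method applied to $N := |\{v : d(v) \ge (\gamma(C) - \eps) pn\}|$: one has $\E N \to \infty$ polynomially in $n$, and the weak dependence between degrees in $\Gnp$ (any two degrees share at most one potential edge) gives $\E[N^2] = (1 + o(1))(\E N)^2$, so $N \ge 1$ whp by Chebyshev's inequality.

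\emph{Step 3 (Properties of $f$).} Setting $f(C) := \gamma(C)^a$, Steps~1 and~2 combine to yield the limit in~\eqref{eq:cor:breaks}. Since $\psi$ is strictly increasing on $[1, \infty)$ with $\psi(1) = 0$ and $\psi(\alpha) \to \infty$ as $\alpha \to \infty$, the equation $\psi(\gamma) = 1/C$ has a unique solution $\gamma(C) > 1$ for every $C > 1$, and $\gamma(\cdot)$ is strictly decreasing on $(1, \infty)$. Because $a \ge 1$ (the root of any tree with at least one edge has positive degree), $f(C) = \gamma(C)^a$ is strictly decreasing and satisfies $f(C) > 1$, as required.

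The principal obstacle is the lower bound in Step~2: while the union bound gives the upper bound painlessly, matching it requires careful second-moment control in order to pin down the exact constant $\gamma(C)$, rather than merely a constant multiple of $pn$.
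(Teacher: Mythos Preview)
Your proposal is correct and follows essentially the same approach as the paper: reduce via Proposition~\ref{prop:degtoext} to the maximum degree, then use that $\Delta/(pn)\pto 1+\phi^{-1}(1/C)$ (your $\gamma(C)$ is exactly $1+\phi^{-1}(1/C)$, since $\psi(1+\eta)=\phi(\eta)$). The only difference is cosmetic: the paper invokes its preliminary Proposition~\ref{prop:max_deg_asymp} for the maximum-degree asymptotics, whereas you sketch that result directly via Chernoff bounds and a second-moment argument, which is precisely how such results are proved (cf.\ the paper's Appendix, using \citeref{Bollobas01}{Theorems~3.1--3.2}).
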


\subsection{Can \texorpdfstring{$M_n$}{M\_n} behave differently for smaller edge probabilities?}% 
\label{sec:res2}
Question~(iii) for maximum tree extension counts asks us to identify a sequence~$\alpha_n$ such that $M_n/\alpha_n\pto 1$, 
and in view of the results from \refS{sec:res1} we henceforth restrict our attention to the sparser regime $pn\ll \log{n}$. 
Here we expect that~$\alpha_n$ will be significantly larger than the mean~$\mu_T$, but it seems difficult to determine~$\alpha_n$ for general trees~$T$. 
The results in this section demonstrate that indeed new complexities emerge in the answer to question~(iii): 
for an interesting class of trees we establish that the form of~$\alpha_n$ changes for different ranges of~$p=p(n)$, 
and show that the behavior~$M_n$ is not always associated with the maximum degree, i.e., our large deviation type optimization proofs reveal that there are several different mechanisms that can make~$M_n$ large
(see~Theorems~\ref{thm:symm_trees_abr}, \ref{thm:sparseP2}, and~Corollary~\ref{cor:Taone}).

To illustrate different types of behaviors that the maximum extension count~$M_n$ can have when~$pn\ll \log{n}$, 
we shall restrict our attention to \textit{spherically symmetric} trees~$T_{a,b}$ of height two, where the root has~$a$ neighbors (children), and each of these vertices has~$b$ children.  
To motivate the statement of \refT{thm:symm_trees_abr} below for~$T=T_{a,b}$, 
it is instructive to consider different possible `strategies' for finding a vertex $v$ for which the extension count~$X_v=X_{T,v}$ is particularly large.  
One natural strategy would be to select a vertex~$v$ of maximum degree~$\Delta$, 
which for~$1 \ll pn \ll \log n$ satisfies (see Proposition~\ref{prop:max_deg_asymp}):
\begin{equation}
  \label{eq:LLN_maxdeg_sparse}
  \frac{\Delta}{D} \pto 1 \qquad \text{with} \qquad D = D(n,p) :=  \frac{\log n}{\log\frac{\log n}{pn}}\,.
\end{equation}
It is plausible that most neighbors of such a vertex~$v$ should have degree $(1+o(1))pn$, 
so this strategy ought to produce a vertex~$v$ such that, whp,
\begin{equation}
  \label{eq:strategy1}
X_v\, \ge\, (1+o(1))D^a (pn)^{ab} \,. 
\end{equation}

Another possible strategy would be to select a vertex~$v$ which is adjacent to one or more vertices of large degree, i.e., of order~$D$. 
Since for any~$x \in (0,1)$ the `probability cost' associated with having degree around $xD$ turns out to be $n^{-x+o(1)}$ (see \refL{lem:n_alpha}), 
it is usually possible to find a vertex~$v$ which has, amongst its neighbors, a set of~$a$ neighbors which all have degrees of the form~$(1/a +o(1))D$. 
This strategy thus ought to produce a vertex~$v$ such that, whp,
\[
X_v\, \ge \, (1+o(1))a!\, \left(\frac{D}{a}\right)^{ab} . 
\]
In fact, the latter strategy is more versatile, as one may consider other sequences which sum to one in place of the sequence $(1/a,\dots ,1/a)$.
Intuitively, 
\begin{equation}
    \label{eq:Lambda}
\Lambda := \bigcup_{k \ge 1} \Bigcpar{(x_1,\dots ,x_k)\in [0,\infty)^k\, :\, \sum_{1 \le i \le k}x_i\, \le\, 1\, }  
\end{equation}
represents the space of possible implementations of this strategy. 
To evaluate the number of extensions each achieves, for nonnegative integers $a, b$ we define the function $f_{a,b}:\Lambda\to \RR$ by setting
\begin{equation}
\label{eq:deg_func}
f_{a,b}(x_1,\dots ,x_k)\, :=\, \sum_{\text{distinct } i_1, \ldots, i_a\in [k]}\quad  \prod_{j\in [a]}\, x_{i_j}^b \, . 
\end{equation}
Note that~$f_{a,b}$ is trivially zero whenever~$k < a$. 
Putting things together, for any vector~${\xx\in \Lambda}$ the discussed strategy ought to produce a vertex~$v$ such that, whp,
\begin{equation}
\label{eq:strategy2}
X_v\, \ge \, (1+o(1))f_{a,b}(\xx)\, D^{ab} \,. 
\end{equation}
Naturally, to get the best lower bound one should then optimize~$f_{a,b}$ over~$\Lambda$.

Our second main result determines the asymptotic behavior of~$M_n$ for spherically symmetric trees~$T=T_{a,b}$ with~$b\ge 2$: it shows that one of the two strategies discussed above is optimal over practically the whole sparse range~$1\ll pn\ll \log{n}$. 
\begin{theorem}[Maximum for spherically symmetric trees~$T_{a,b}$, simplified]\label{thm:symm_trees_abr} 
Fix~$T=T_{a,b}$, with $a\ge 1$ and~$b\ge 2$. 
If~$(\log{n}/\log{\log{n}})^{1-1/b}\ll pn \ll \log{n}$, then
\begin{equation}
\label{eq:sph_symm_denser}
\frac{M_n}{D^{a}(pn)^{ab}}\, \pto\, 1 \,. 
\end{equation}
If~$1\ll pn\ll (\log{n}/\log{\log{n}})^{1-1/b}$, then 
\begin{equation}
\label{eq:sph_symm}
\frac{M_n}{D^{ab}}\, \pto\, \sup_{\xx\in \Lambda} \, f_{a,b}(\xx) \,.
\end{equation}
\end{theorem}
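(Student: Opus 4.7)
The plan is to prove \refT{thm:symm_trees_abr} by reducing the problem to an optimisation over degree profiles of~$v$'s neighbourhood, then running matched moment arguments for the upper and lower bounds. The key reduction is that for any vertex~$v$ with neighbours~$u_1,\dots ,u_k$ of degrees~$d_1,\dots ,d_k$, one has
\[ X_v\, =\, (1+o(1))\, f_{a,b}(d_1,\dots ,d_k), \]
since the correction for requiring the grandchildren to be distinct (from each other, the~$u_i$'s, and~$v$) contributes~$O(b d_i/n)=o(1)$ per factor in the range~$pn \ll \log n$. Writing~$\xx_v:=(d(u)/D)_{u\in N(v)}$ for the normalised neighbourhood degree profile of~$v$, this gives~${M_n =(1+o(1)) D^{ab}\max_{v\in [n]} f_{a,b}(\xx_v)}$, so the remaining task is to analyse this maximum.

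For the lower bound I use the two strategies outlined in the introduction. In Regime~1, take~$v^\star$ to be a vertex of maximum degree, which by~\eqref{eq:LLN_maxdeg_sparse} satisfies~$d(v^\star)\sim D$; a concentration argument shows whp that all but~$o(D)$ of its neighbours have degree~$(1+o(1))pn$, giving~$X_{v^\star}\ge (1-o(1))D^a(pn)^{ab}$. In Regime~2, let~$\xx^\star=(x_1^\star,\dots ,x_k^\star)$ be an approximate maximiser of~$f_{a,b}$ on~$\Lambda$, which may be taken of bounded length by a short Lagrange-multiplier analysis (long profiles suffer from the~$k^{a-ab}$ scaling of the symmetric vector~$(1/k,\dots ,1/k)$). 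Using \refL{lem:n_alpha} to count vertices with~$d(u)\approx x_i^\star D$, the expected number of~$v$ whose neighbourhood contains a matching profile is of order
\[ n\cdot (pn)^k \cdot n^{-\sum_i x_i^\star + o(1)}\, \ge\, (pn)^k\, \to\, \infty , \]
and a second-moment argument establishes the existence of such a~$v$ whp.

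For the upper bound I exploit the scaling relation~$f_{a,b}(c\xx)=c^{ab}f_{a,b}(\xx)$: any~$\xx\in [0,\infty)^k$ with~$f_{a,b}(\xx)>(1+\eps)\sup_\Lambda f_{a,b}$ must satisfy~$\sum_i x_i>(1+\eps)^{1/(ab)}=:1+\delta(\eps)$. After truncating~$\xx_v$ to its top~$k_0=k_0(a,b,\eps)$ entries (losing at most a factor~$1+\eps/2$ in~$f_{a,b}$, by the same Lagrange analysis), a first-moment union bound over a fine grid of length-$k_0$ profiles~$\xx$ bounds the expected number of violating vertices~$v$ by
\[  (pn)^{k_0}\cdot n^{1-\sum_i x_i+o(1)}\, =\, n^{o(1)-\delta(\eps)}\, =\, o(1), \]
since~$(pn)^{k_0}=n^{o(1)}$ throughout the range~$pn\ll\log n$. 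The analogous argument handles Regime~1, where~$D^a(pn)^{ab}$ replaces~$D^{ab}\sup_\Lambda f_{a,b}$ as the target, and one additionally uses~$\Delta\sim D$ together with concentration of neighbour degrees around~$pn$ to rule out atypical~$v$ with many high-degree neighbours.

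The main obstacle is controlling, in both moment computations, the correlation between the events ``$u_i$ adjacent to~$v$'' and ``$d(u_i)\approx x_i D$'': since typical codegrees in~$\Gnp$ are~$p^2 n=o(1)$, these events are nearly independent, but making this precise requires careful Chernoff-type tail bounds for degrees after conditioning on a bounded number of edges, and one must verify that the dependence structure does not spoil the sharp constant~$\sup_\Lambda f_{a,b}$. A secondary technical point is establishing the boundedness of near-optimal profiles for~$f_{a,b}$ over~$\Lambda$, which controls the size of the union-bound grid in the upper bound and ensures that the construction in the lower bound uses only finitely many degree classes.
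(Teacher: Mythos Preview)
Your overall plan---reduce $X_v$ to $f_{a,b}$ evaluated at the neighbour degrees, then argue upper and lower bounds separately---captures the right intuition, and your lower-bound constructions are essentially the content of \refL{lem:symm_trees_GW_large} and \refL{lem:symm_trees_GW_small}. The paper takes a somewhat different route overall: it first reduces to extension counts in a Galton--Watson tree via the coupling Lemmas~\ref{lem:sparse_lower_GW}--\ref{lem:sparse_upper_GW} (which cleanly removes the correlation issues you flag at the end), proves the more general \refT{thm:symm_trees} covering also the intermediate regime $pn\asymp(\log n/\log\log n)^{1-1/b}$, and then reads off \refT{thm:symm_trees_abr} as an immediate corollary.

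The genuine gap is your truncation step in the upper bound. The claim that passing from $\xx_v$ to its top $k_0$ entries loses at most a factor $1+\eps/2$ in $f_{a,b}$ is false for arbitrary vectors: take $\xx=(c,\dots,c)\in[0,\infty)^N$ with $c=\Theta(N^{-1/b})$, so that $f_{a,b}(\xx)\asymp N^a c^{ab}\asymp 1$, while the top-$k_0$ truncation $\yy$ gives $f_{a,b}(\yy)\asymp (k_0/N)^a\to 0$. Your Lagrange analysis only locates near-optima of $f_{a,b}$ \emph{restricted to $\Lambda$}; it says nothing about truncating an unconstrained profile, and the actual $\xx_v$ is not in $\Lambda$---indeed $\|\xx_v\|_1=\sum_{u\in N(v)}d(u)/D$ is typically of order $pn$ when $d(v)\sim D$. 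What is needed is a \emph{probabilistic} truncation: whp, neighbours of $v$ with ``intermediate'' degree (between $(1+\eps)pn$ and $\eta D$ with $\eta=(\log\log n)^{-5}$) contribute only $o(\max\{D(pn)^b,D^b\}^a)$ to $X_v$, so one may restrict attention to the few neighbours of degree $\ge\eta D$ (whose degrees whp sum to at most $(1+\delta)D$, by a separate tail argument) together with the bulk of neighbours of degree $\sim pn$. This is exactly the content of \refL{lem:crude} (via Facts~\ref{fact:int}--\ref{fact:midd}) and \refL{lem:large}, and without it your union bound over bounded-length profiles does not cover all ways $X_v$ can be large. The same issue resurfaces in Regime~1, where ruling out mixed profiles (a few high-degree neighbours alongside many moderate-degree ones) again requires control of intermediate degrees; the paper packages both regimes uniformly in \refP{prop:upperbound} and then uses \refP{prop:optimizer} to collapse the resulting optimisation to the two simple expressions in \eqref{eq:sph_symm_denser}--\eqref{eq:sph_symm}.
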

\begin{remark} 
Theorem~\ref{thm:symm_trees} in Section~\ref{sec:symm_trees} shows that the correct asymptotics in the `missing range' $pn \asymp (\log{n}/\log{\log{n}})^{1-1/b}$ are obtained by a combination of the above strategies. 
\end{remark}
		
The most important spherically symmetric tree not covered by Theorem~\ref{thm:symm_trees_abr} is the two-edge path~${T = T_{1,1}=P_2}$ rooted at an endvertex. 
In this case \refT{thm:sparseP2} below shows that the first strategy `wins' whenever~$pn\gg \log{\log{n}}$, yielding~$M_n/Dpn\pto 1$. 
Here the second strategy only gives $\Theta(D)$ extensions (since~$a=b=1$), meaning that it is always inferior to the first strategy. 
Note that in the two strategies discussed above, the vertex~$v$ which maximizes the number of extensions is either itself of (near) maximum degree~$D$ or has neighbors of degree~$\Theta(D)$.
Interestingly, in the very sparse regime $pn \ll \log \log n$ a new optimal strategy emerges for paths~$P_2$ of length two: 
the proof of \refT{thm:sparseP2} below shows that the maximum number of extensions can be attained by a vertex of degree~$o(D)$, 
or, more precisely, by a vertex~$v$ whose degree is asymptotically equal~to 
\[
\frac{1}{\log\frac{\log{\log{n}}}{pn}}\, \cdot \, D\, \approx\, \frac{\log n}{(\log \log n) \log \frac{\log \log n}{pn}} \,, 
\]
and whose neighbors have, on average, degree about~$\log \log n$.
\begin{theorem}[Maximum for paths of length two, sparse case]\label{thm:sparseP2}
Let~$T = T_{1,1}$ be a path of length two rooted at one endvertex.  
If~$1 \ll pn \ll \log{n}$, then
\begin{equation*}
  \frac{M_n}{\alpha_n}\, \pto\, 1 \,,
\end{equation*}
where the sequence~$\alpha_n$ satisfies
\begin{equation*}
  \alpha_n \sim \begin{cases}
    Dpn  \quad &\text{if } \log{\log{n}}\ll pn \ll \log{n}\,, \\
    \frac{\log n}{\log\left( 1 +\frac{\log \log n}{pn} \right)}  \quad &\text{if } 1\ll pn=O(\log{\log{n}}) \,.
  \end{cases}
\end{equation*}
\end{theorem}
\begin{remark}
\refT{thm:paths} in \refS{sec:paths} extends \refT{thm:sparseP2} to paths~$P_m$ of fixed length~${m \ge 1}$, 
in which case the optimal strategy changes more often; see~the discussion below~\eqref{eq:km_lambda_kmminus}. 
\end{remark}

Finally, the following corollary of Theorem~\ref{thm:sparseP2} (proved in Section~\ref{sec:paths}) 
covers the remaining spherically symmetric trees~$T_{a,b}$ with~$b=1$ that were excluded so far. 
\begin{corollary}
  \label{cor:Taone}
Fix~$T = T_{a,1}$, with~$a \ge 1$. 
If~$1 \ll pn \ll \log n$, then
\begin{equation}\label{eq:Taone}
    \frac{M_n}{\alpha_n^a} \pto 1 \,,
\end{equation}
where the sequence~$\alpha_n$ is as in \refT{thm:sparseP2}. 
\end{corollary}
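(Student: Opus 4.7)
The plan is to derive Corollary~\ref{cor:Taone} from~\refT{thm:sparseP2} via the sandwich $X_{T_{a,1},v}=(1+o(1))\,Y_v^{\,a}$, where $Y_v:=X_{P_2,v}=\sum_{u\sim v}(d(u)-1)$ is the $P_2$-extension count, holding whp uniformly over all vertices $v$ for which $Y_v$ is close to its maximum $\alpha_n$. Taking maxima and invoking~\refT{thm:sparseP2} then immediately yields $M_n\sim\alpha_n^{a}$.

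\emph{Upper bound.} A $T_{a,1}$-extension of~$v$ is an ordered tuple $(u_1,\dots,u_a,w_1,\dots,w_a)$ of pairwise distinct vertices, all distinct from~$v$, with $u_i\sim v$ and $w_i\sim u_i$. Dropping all distinctness constraints on the $w_i$'s gives
\[
X_{T_{a,1},v}\; \le\; \sum_{\substack{u_1,\dots,u_a\in N(v)\\ \text{distinct}}}\,\prod_{i=1}^a(d(u_i)-1)\; \le\; \Bigpar{\sum_{u\sim v}(d(u)-1)}^{a}\; =\; Y_v^{a},
\]
and taking maxima yields $M_n\le (1+o(1))\alpha_n^{a}$ whp.

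\emph{Lower bound.} Expand $Y_v^{a}=\sum_{u_1,\dots,u_a\in N(v)}\prod_i(d(u_i)-1)$ over all (possibly coinciding) tuples. Then $X_{T_{a,1},v}$ differs from $Y_v^{a}$ only by three `forbidden' contributions: $(E_1)$ tuples with $u_i=u_j$ for some $i\ne j$; $(E_2)$ realizations with $w_i\in\{u_1,\dots,u_a\}\setminus\{u_i\}$ (which force $u_iu_j\in E$); and $(E_3)$ realizations with $w_i=w_j$ ($i\ne j$) (which force a common neighbor of $u_i,u_j$). Elementary overcounting gives, for some constant $C_a$ depending only on~$a$,
\[
(E_1)\;\le\; C_a\,\Delta\,Y_v^{a-1},\qquad (E_2)+(E_3)\;\le\; C_a\,\bigl(\Delta\,T(v)+C_2(v)\bigr)\,Y_v^{a-2},
\]
where $T(v)$ counts triangles through~$v$ and $C_2(v)$ counts ordered pairs of distinct neighbors of~$v$ sharing a further common neighbor. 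To show all three terms are $o(Y_v^{a})$ uniformly over vertices with $Y_v\ge (1-o(1))\alpha_n$, I would combine: (i)~the sparse maximum-degree asymptotic $\Delta=(1+o(1))D$ from~\eqref{eq:LLN_maxdeg_sparse}, which together with $Y_v\le d(v)\Delta$ forces $d(v)\ge (1-o(1))\alpha_n/D\to\infty$ in all subregimes of~\refT{thm:sparseP2} and handles~$(E_1)$; and (ii)~standard first-moment plus Chernoff estimates for $T(v)$ and the number of $4$-cycles through~$v$, using that $np^2=p\cdot pn=o(1)$ throughout the regime $1\ll pn\ll \log n$ (since necessarily $p=o(1)$), so that common neighbors of pairs of vertices are rare. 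Applying the resulting sandwich to the random vertex $v^\star$ achieving $Y_{v^\star}=M_{P_2,n}\sim \alpha_n$ gives $M_n\ge (1-o(1))\alpha_n^{a}$ whp.

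\emph{Main obstacle.} The only real difficulty is ensuring the bounds on $T(v)$ and $C_2(v)$ hold \emph{uniformly} over the random vertex~$v^\star$; this is handled by a union bound over~$[n]$ combined with Chernoff-type tail estimates, conditional on the neighborhood of~$v$. No large-deviation tool beyond those already used in the proof of~\refT{thm:sparseP2} is required.
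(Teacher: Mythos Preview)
Your upper bound is identical to the paper's. For the lower bound, your decomposition into $(E_1),(E_2),(E_3)$ is correct, but you then take an unnecessarily complicated route for $(E_2)$ and $(E_3)$: you introduce the triangle count~$T(v)$ and the common-neighbor count~$C_2(v)$, which forces you to establish probabilistic control of these quantities uniformly over the random maximizer~$v^\star$ --- exactly the ``main obstacle'' you flag.

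The paper sidesteps this entirely. It observes that the whole defect $Y_v^{a}-X_{T_{a,1},v}$ satisfies the \emph{deterministic} bound
\[
Y_v^{a}-X_{T_{a,1},v}\;=\;O\bigl(\Delta\,Y_v^{a-1}\bigr),
\]
by the one-line argument: having chosen any $(a-1)$ of the $P_2$-extensions, the number of $P_2$-extensions $(u,w)$ that overlap one of them is $O(\Delta)$ (if a vertex of $(u,w)$ is prescribed, the other has at most $\Delta$ choices). This is precisely the bound you already wrote down for~$(E_1)$, and it covers $(E_2)$ and $(E_3)$ just as well --- no triangles, no $4$-cycles, no Chernoff. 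Plugging in $\Delta\sim D$ and $\max_v Y_v\sim\alpha_n$ from~\refT{thm:sparseP2} and~\eqref{eq:LLN_maxdeg_sparse} gives $Y_v^{a}-X_{T_{a,1},v}=O(\alpha_n^{a-1}D)$, and the only remaining (easy) fact is $D\ll\alpha_n$.

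So your proposal is correct in principle, but the ``main obstacle'' is self-inflicted: replacing your $(E_2)+(E_3)$ estimate by the same $C_a\Delta Y_v^{a-1}$ bound you used for $(E_1)$ eliminates it.
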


\subsection{Background and discussion}\label{eq:background}
The study of subgraph counts in random graphs has become an extremely well-established area of research~\cites{B81,R1988,JOR,JW2016,SW18,HMS2022}. 
Extension counts are an important and natural variant, which frequently arises in many probabilistic proofs and applications, 
including zero-one~laws in random graphs~\cites{SS1988,LS1991}, 
games on random graphs~\cites{LP2010}, random graph processes~\cites{BK2010,BW2019}, 
and random analogues of classical extremal and Ramsey results~\cites{SS2018,BK2019}.

For strictly balanced rooted graphs~$H$, the concentration of the maximum extension count~$M_n=M_{H,n}$ around the mean count~$\mu_H$ is fairly well understood.\footnote{We remark that~\cite{S90} and~\cite{SW19} both treat more general extension counts, where the root may consist of a set of vertices (while we focus on the case where the root consists of a single vertex).} 
Indeed, for such graphs Spencer~\cite{S90} proved that~${M_n/\mu_H \pto 1}$ when~$\mu_H\gg \log{n}$, and asked whether this condition on~$\mu_H$ was necessary.
\v{S}ileikis and Warnke~\cite{SW19} answered this question, by showing 
that~${M_n/\mu_H \pto 1}$ breaks down when~$\mu_H=\Theta(\log{n})$ . 

For rooted graphs~$H$ that are not strictly balanced our understanding remains unsa\-tisfactory, and we are not aware of a general formula for the order of magnitude of~$M_n$ for all edge probabilities~$p=p(n)$ of interest. 
In particular, as pointed out in~\cite{SW19}, the behavior of the maximum extension count~$M_n$ can be quite different if $H$ is not strictly balanced.
For example, for graphs that are not balanced it can even happen\footnote{Consider for example a rooted triangle with a pendant edge added at one of the non-root vertices, 
with~$p=n^{-\gamma}$ for some $\gamma\in (3/4,1)$.
By rooting at a vertex of which is in a triangle of~$\Gnp$, it then is straightforward to see that whp~$M_n \ge \Theta(pn) = \Theta(n^{1-\gamma})$, despite~$\mu_H = \Theta(n^3 p^4) \ll 1$.}
that~$M_n = n^{\Theta(1)}$ whp despite~$\mu_H \ll 1$. 
In view of this it remains an interesting open problem to find, for all rooted graphs,  a suitable sequence~$\alpha_n$ such that $M_n/\alpha_n$ is tight.

In Sections~\ref{sec:res1}--\ref{sec:res2} we have seen that in some cases it is possible to find a sequence~$\alpha_n$ such that $M_n/\alpha_n \to~1$, or, even better, to find yet another sequence~$\beta_n$ such that 
\begin{equation}
\label{eq:second_term}
\frac{M_n\, -\, \alpha_n}{\beta_n}\, \pto\, 1 \,.
\end{equation}
It remains an intriguing open problem to understand for which rooted graphs and for which edge probabilities such detailed results hold. 
To stimulate more research into this circle of problems, we propose the following modest generalization of \refT{thm:max_ext}.
\begin{problem}
  \label{pr:std_dev}
Determine for which connected rooted graphs~$H$ the two natural conditions $\Phi_H := \min_{G \subseteq H} \mu_G \gg \log n$ and~$p \ll 1$
together imply that
\begin{equation}\label{eq:pr:std_dev}
    \frac{M_n - \mu_H}{ \sigma_H \sqrt{2  \log n}} \pto 1\,.
\end{equation}
\end{problem}
To motivate why~\eqref{eq:pr:std_dev} is plausible for many rooted graphs, first note that in \mbox{Problem~\ref{pr:std_dev}} the assumption~$\Phi_H \gg \log n$ and the standard variance estimate~$\sigma_H^2 \asymp \mu_H^2/\Phi_H$ (see~\citeref{SW19}{eq.(10)}) together imply that~$\sigma_H \sqrt{2  \log n} \ll \mu_H$ holds, 
which intuitively means that~\eqref{eq:pr:std_dev} concerns the so-called `moderate deviations regime' of extension counts (where one often hopes for the tail behavior to be asymptotically normal). 
In view of the known fact that~$X_v$ is asymptotically normal (see~\citeref{SW19}{Claim~17(ii)}), 
the hope is then (i)~that for $x = \Theta(\sigma_H \sqrt{2\log n})$ the lower and upper tail satisfy $-\log \prob{X_v \le \mu_H - x} \sim x^2/(2\sigma_H^2)$ and $-\log \prob{X_v \ge \mu_H + x} \sim x^2/(2\sigma_H^2)$, 
and (ii)~that the union bound over the $n$ choices of~$v$ is not wasteful, 
which together lead to~\eqref{eq:pr:std_dev}.
In concrete words, \refT{thm:max_ext} shows that this heuristic is correct for rooted trees, and we believe that it is also correct for many other rooted graphs. 
However, the general case is somewhat delicate, since this heuristic can fail: see~\citeref{SW19}{Proposition~2} for a counterexample (where the upper tail behaves differently). 
An interesting generalization of Problem~\ref{pr:std_dev} concerns graphs with~$r$ root vertices, defining~$M_n$ as the maximum of extension counts over all $r$-tuples 
(here the natural denominator in~\eqref{eq:pr:std_dev} would then  be~$\sigma_H \sqrt{2r \log n}$, since the tail probability would need to beat the number of $r$-tuples, which is $(n)_r \sim n^r$). Recently results on such graphs have been obtained by Rodionov and Zhukovskii~\cite{RZ2023} for stars rooted on all leaves, for a wide range of $p = p(n)$, and also by Vakhrushev and Zhukovskii~\cite{vakhrushev2023maximum} for a larger class of rooted graphs, focusing on constant $p$.

\begin{remark} We believe our methods can be extended to derive similar results for all trees of depth two (but did not check all details).  We did not include these extensions because the statements become more complex and the proofs more technical.  If the degrees of the neighbors are given by $\mathbf{b}=(b_{1},\dots, b_a)$ where $2\le b_1\le b_2\le \dots \le b_a$, then the natural analogue of the function $f_{a,b}$ would be
\[
f_{\mathbf{b}}(x_1,\dots ,x_k)\, :=\, \sum_{\text{distinct } i_1, \ldots, i_a\in [k]}\quad  \prod_{j\in [a]}\, x_{i_j}^{b_j} \, .
\]
Let $B:=\sum_i b_i$. By adapting our arguments, we believe that the following can be shown: 
if
\( (\log{n}/\log{\log{n}})^{1-1/b_a}\ll pn \ll \log{n},
\)
then
\[
\frac{M_n}{D^{a}(pn)^{B}}\, \pto\, 1 \, ,
\]
and if~$1\ll pn\ll (\log{n}/\log{\log{n}})^{1-1/b_1}$, then
\[
\frac{M_n}{D^{B}}\, \pto\, \sup_{\xx\in \Lambda} \, f_{\mathbf{b}}(\xx) \,.
\]
However, in the missing range $(\log{n}/\log{\log{n}})^{1-1/b_1}\le pn\le (\log{n}/\log{\log{n}})^{1-1/b_a}$ there may be a number of intermediate regimes in which different intermediate strategies are optimal.  
This illustrates that the problem is surprisingly subtle even for trees of depth two. 
For trees of depth three or more, we believe that further technical difficulties emerge.
\end{remark}

Limit ~\eqref{eq:second_term} is equivalent to $(M_n - \alpha_n - \beta_n)/\beta_n \to 0$. Occasionally it is possible to further refine this by showing that for some sequence $\gamma_n \ll \beta_n$ the random variable ${(M_n-\alpha_n-\beta_n)/\gamma_n}$ converges to a nondegenerate distribution. 
For the maximum degree, such results were obtained by Ivchenko~\cite{Ivchenko73} and Bollob{\'a}s~\cite{B80} in the 1970s and 1980s; see also~\cites{RZ2023, vakhrushev2023maximum}. 
For rooted cliques, such a result was recently obtained by Isaev, Rodionov, Zhang and Zhukovskii~\cite{Isaev2021extremal},  
for a restricted range of edge probabilities~${p=p(n)}$ that assumes $\Phi_{H} \ge n^{\Omega(1)}$ (rather than the natural target range proposed by \mbox{Problem~\ref{pr:std_dev}}), using a conditional maximization argument that differs from our approach.  
As a simple corollary of Proposition~\ref{prop:degtoext} (proved in Section~\ref{sec:proof_dense}), 
for rooted trees we also obtain such a limiting distribution result 
for most edge probabilities~${p=p(n)}$ of interest. 
\begin{corollary}[Maximum for general trees: Gumbel distribution]
\label{cor:Gumbel}
Fix a rooted tree~$T$. If $pqn \gg (\log n)^2$, then 
there exist positive sequences~$a_n$ and~$b_n$ 
such that~${(M_n - a_n)/b_n}$ converges in distribution to the standard Gumbel~distribution: for any fixed~$x \in \RR$, we~have 
\begin{equation}\label{eq:cor:Gumbel}
\lim_{n \to \infty} \prob{\frac{M_n - a_n}{b_n} \le x}  = \exp\bigpar{-\e^{-x}}.
  \end{equation}
\end{corollary}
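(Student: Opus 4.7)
The plan is to transfer the classical Gumbel limit for the maximum degree $\Delta = \Delta(\Gnp)$ to $M_n$ via the pointwise approximation from Proposition~\ref{prop:degtoext}. The classical work of Bollob\'as and Ivchenko (see~\cites{Ivchenko73,B80,Bollobas01}) provides, in the relevant regime, positive sequences $\tilde a_n \sim pn$ and $\tilde b_n \asymp \sqrt{pqn/\log n}$ such that $(\Delta - \tilde a_n)/\tilde b_n \dto \text{Gumbel}$; the strategy is to push this through the smooth map $d \mapsto d^a(pn)^{e_T-a}$ implicit in Proposition~\ref{prop:degtoext}.

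First I would set $g(d) := d^a (pn)^{e_T - a}$, where $a$ is the root degree of~$T$, so that Proposition~\ref{prop:degtoext} reads
\[
\max_{v \in [n]}|X_{T,v} - g(d(v))| \le C(pn)^{e_T-1}\sqrt{\log n}\quad\text{whp.}
\]
Since $g$ is strictly increasing on $[0,\infty)$, maximising both sides over $v$ yields $|M_n - g(\Delta)| \le C(pn)^{e_T-1}\sqrt{\log n}$ whp, which reduces the task to the distributional analysis of $g(\Delta)$.

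I would then linearise $g$ around $\tilde a_n$ and set $a_n := g(\tilde a_n)$ and $b_n := g'(\tilde a_n)\tilde b_n$; noting $g'(\tilde a_n) \sim a(pn)^{e_T-1}$, one has $b_n \asymp (pn)^{e_T-1}\sqrt{pqn/\log n}$. Since $\Delta, \tilde a_n \asymp pn$ whp, a second-order Taylor expansion gives
\[
g(\Delta) - g(\tilde a_n) - g'(\tilde a_n)(\Delta - \tilde a_n) = O\bigpar{(pn)^{e_T-2}(\Delta - \tilde a_n)^2},
\]
and the tightness $(\Delta-\tilde a_n)^2 = O_{\mathrm P}(\tilde b_n^2)$ bounds this remainder in probability by $(pn)^{e_T-1}q/\log n$. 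Combined with the approximation bound above, dividing by $b_n$ gives
\[
\frac{M_n - a_n}{b_n}\, =\, \frac{\Delta - \tilde a_n}{\tilde b_n}\, +\, o_{\mathrm P}(1),
\]
and Slutsky's lemma delivers~\eqref{eq:cor:Gumbel}.

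The main (and really only) obstacle is verifying that both errors above are $o(b_n)$, and this is precisely what pins down the hypothesis $pqn \gg (\log n)^2$. The ratio of the Proposition~\ref{prop:degtoext} error to $b_n$ is of order $\log n/\sqrt{pqn}$, which is $o(1)$ exactly when $pqn \gg (\log n)^2$; the Taylor remainder ratio is of the smaller order $\sqrt{q/(pn\log n)}$. The edge case $a=1$ is trivial, since $g$ is then linear and no Taylor step is required.
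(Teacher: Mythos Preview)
Your proposal is correct and follows essentially the same route as the paper: both arguments use the known Gumbel limit for~$\Delta$ with scaling~$B_n \asymp \sqrt{pqn/\log n}$, transfer it to~$\hat M_n = \Delta^a(pn)^{e_T-a}$ via the approximation in Proposition~\ref{prop:degtoext}, define $a_n = A_n^a(pn)^{e_T-a}$ and $b_n = aA_n^{a-1}B_n(pn)^{e_T-a}$, and then verify that both the linearisation error (your Taylor remainder, the paper's expansion of $(A_n+B_nx)^a$) and the Proposition~\ref{prop:degtoext} error $\theta_n = C(pn)^{e_T-1}\sqrt{\log n}$ are $o(b_n)$, the latter being exactly where $pqn \gg (\log n)^2$ enters. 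The only cosmetic difference is that you phrase the conclusion via Slutsky's lemma, whereas the paper sandwiches the CDF directly and invokes continuity of the Gumbel distribution function.
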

In the aforementioned results and Corollary~\ref{cor:Gumbel}, the limit is standard Gumbel distribution. Recent work~\cite{vakhrushev2023maximum} demonstrates that other limiting distributions can arise.

A natural refinement of the maximum extension counts are order statistics, that is, $m$-th largest extension count for $m = 1, 2, \dots$. In the case of vertex degrees, results for such statistics were obtained in \cite{B80}, \cite{Ivchenko73}, \cite{RZ2023}. We have omitted this direction, but remark that some results can be obtained, provided $p$ does not tend to $0$ or $1$ too fast, by combining Proposition~\ref{prop:degtoext} and results on the degree sequence from, say,  \citeref{Bollobas01}{Chapter~3}.

\subsection{Organization of the paper}
In Section~\ref{sec:prelim} we present some preliminary random graph theory results. 
In Section~\ref{sec:proof_dense} we prove Theorem~\ref{thm:max_ext} for arbitrary trees~$T$ in the `dense' case~$pqn \gg \log n$.  
In Section~\ref{sec:reduction} we prove two lemmas which allow us to consider extension counts in random trees rather than random graphs, 
and these auxiliary results are subsequently used to prove our main results in the `sparse' case~$1 \ll pn \ll \log n$: 
in Section~\ref{sec:paths} we prove Theorem~\ref{thm:sparseP2} for paths~$P_m$ of any fixed length~$m \ge 1$ (see \refT{thm:paths}), 
and in Section~\ref{sec:symm_trees} we give the more involved proof of Theorem~\ref{thm:symm_trees_abr} for spherically symmetric trees~$T_{a,b}$ of height two in the entire range~$1 \ll pn \ll \log n$ (see \refT{thm:symm_trees}).  
Finally, in \refS{sec:minimum} we demonstrate that our methods also carry over to minimum rooted tree extension counts, i.e., to the random variable~${\min_{v \in [n]}X_v}$ (see \refT{thm:min_ext}).

\pagebreak[3]

\section{Preliminaries}
\label{sec:prelim}
In this preparatory section we introduce basic concepts and results on the binomial distribution and the degrees in~$\Gnp$. We also provide an asymptotic formula for the variance of the number of $T$-extensions. Since all proofs are more or less routine (and some of them even follow from textbook results~\cite{Bollobas01}), we defer them to~\refApp{app:proofs} to avoid clutter. 
On a first reading the reader may wish to skip straight to~Section~\ref{sec:proof_dense}. 

\subsection{Asymptotic notation} 
\label{ss:asympt}
We use standard asymptotic notation with respect to two sequences $a_n, b_n$ (which are sometimes implicit): we write $a_n \sim b_n$ whenever $a_n/b_n \to 1$; moreover, assuming that eventually $b_n > 0$, we write $a_n = o(b_n)$ or $a_n \ll b_n$ or $b_n \gg a_n$ whenever $a_n /b_n \to 0$, write $a_n = O(b_n)$ whenever $\limsup_{n \to \infty} |a_n| / b_n < \infty$, write $a_n = \Omega(b_n)$ whenever $\liminf_{n \to \infty} a_n/b_n > 0$, and write $a_n \asymp b_n$ or $a_n = \Theta(b_n)$ whenever both $a_n = O(b_n)$ and $a_n = \Omega(b_n)$ hold.
\subsection{Asymptotics of binomial probabilities}\label{ss:bin_asymp}
We recall rather universal upper and lower bounds for binomial probabilities.
Let~$q := 1 - p$ and $\xi \sim \Bin(n,p)$.
To this end we recall the large deviation rate function for the Poisson distribution, defined~as
\begin{equation}
    \label{eq:entropy}
    \phi(x) := (1 + x) \log (1 + x) - x\,, \qquad x \in [-1,\infty)\,,
\end{equation}
with~$\phi(-1) = 1$ defined as the right limit (as usual). 
We will use the following well-known \emph{Chernoff~bound} (see~\citeref{JLRbook}{Theorem~2.1}) for the upper tail:
\begin{equation}
\label{eq:Chern_upper}
\prob{\xi \ge (1 + \eta)pn}  \le \exp \Bigpar{ - pn \phi(\eta) } 
\,,  \qquad \eta \ge 0.\\
\end{equation}
Noting that~$\phi(\eta) \ge \phi(\eta)-1 = (1 + \eta) \log\bigpar{(1 + \eta)/\e}$, inequality~\eqref{eq:Chern_upper} yields the following simple upper bound, which is often useful when~$x > \e pn$:
\begin{equation}
  \label{eq:Chern_upper_simpler}
  \prob{\xi \ge x} \le \exp \left( - x \log \frac{x}{\e pn}  \right)\,, \qquad x > 0\,.
\end{equation}

The following proposition gives a lower bound to the point probability. When  \eqref{eq:bin_lower2} is used as a lower bound for the upper tail, typically the $\log k$ term is negligible, in which case it matches the upper bound~\eqref{eq:Chern_upper}.
\begin{proposition}
  \label{prop:lower_poisson}
  Let $\xi \sim \Bin(n,p)$. Let $k$ be an integer and define a number $\eta$ so that $k = (1 + \eta)pn$. If
$1 \ll k \ll \sqrt{n}$ and $pn \ll \sqrt{n}$, then
\begin{equation}
\label{eq:bin_lower2}
\prob{\xi = k} \ge \exp\Bigpar{- pn \phi(\eta) + O(\log k)}\,.
\end{equation}
\end{proposition}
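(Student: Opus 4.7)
\medskip

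\noindent\textbf{Proof plan for Proposition~\ref{prop:lower_poisson}.}
My plan is to start from the exact identity
\[
\prob{\xi = k} \,=\, \binom{n}{k} p^k (1-p)^{n-k},
\]
take logarithms, and estimate each of the three factors under the assumptions $1 \ll k \ll \sqrt{n}$ and $pn \ll \sqrt{n}$. The key point is that both hypotheses combine to make the natural error terms (coming from the Taylor expansion of $\log(1-p)$ and from the $(n)_k \approx n^k$ approximation) all $o(1)$, so that nothing except Stirling's square-root correction survives. In particular, the proof will actually give a matching upper bound, yielding equality up to a factor $\e^{O(\log k)}$.

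\medskip

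\noindent\textbf{Step 1: handle $\binom{n}{k}$.}
Write $(n)_k = n^k \prod_{j=0}^{k-1}(1 - j/n)$. Since $k = o(\sqrt{n})$, Taylor expanding $\log(1-j/n) = -j/n + O(j^2/n^2)$ and summing yields $(n)_k = n^k \exp\bigpar{-k^2/(2n) + O(k^3/n^2)} = n^k(1+o(1))$. Combining with Stirling's formula $k! = \sqrt{2\pi k}(k/\e)^k(1+O(1/k))$ gives
\[
\log \binom{n}{k} \,=\, k \log(n/k) + k - \tfrac{1}{2}\log k + O(1).
\]

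\noindent\textbf{Step 2: handle $(1-p)^{n-k}$.}
From $pn \ll \sqrt{n}$ we get $p = o(1/\sqrt{n})$, so $np^2 = o(1)$ and also $pk \le p\sqrt{n} = o(1)$. Hence $\log(1-p) = -p - O(p^2)$ gives
\[
(n-k)\log(1-p) \,=\, -pn + pk + O(np^2) \,=\, -pn + pk + o(1).
\]

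\noindent\textbf{Step 3: combine and substitute $k = (1+\eta)pn$.}
Summing the estimates from Steps 1--2 with the elementary identity $\log p^k = k \log p$, I obtain
\[
\log \prob{\xi = k} \,=\, k \log(np/k) + k + pk - pn - \tfrac{1}{2}\log k + O(1).
\]
Now $k = (1+\eta)pn$ means $np/k = 1/(1+\eta)$, so $k\log(np/k) = -(1+\eta)pn \log(1+\eta)$, and $k - pn = \eta pn$, while $pk = o(1)$ by Step 2. Therefore
\[
\log \prob{\xi = k} \,=\, -pn\bigpar{(1+\eta)\log(1+\eta) - \eta} - \tfrac{1}{2}\log k + O(1) \,=\, -pn\,\phi(\eta) + O(\log k),
\]
which is exactly the claimed bound (and in fact the matching upper bound as well).

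\medskip

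\noindent\textbf{Main obstacle.}
There is no single hard step: the proof is essentially bookkeeping. The only point that requires care is to verify that every error term introduced (the $(n)_k \approx n^k$ correction, the quadratic term in the expansion of $\log(1-p)$, and the term $pk$) is genuinely $o(1)$, and for this the roles of the two distinct hypotheses $k \ll \sqrt n$ and $pn \ll \sqrt n$ must be kept separate: the former controls the binomial-coefficient expansion, the latter controls the $(1-p)^{n-k}$ expansion. If $\eta$ is close to $-1$ (so that $k$ is much smaller than $pn$) the two conditions are genuinely independent, which is the only reason both are listed.
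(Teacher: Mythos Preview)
Your proof is correct. It differs from the paper's route: rather than expanding $\binom{n}{k}p^k(1-p)^{n-k}$ directly, the paper invokes a ready-made Poisson approximation from \cite{Bollobas01} (namely $\prob{\xi = k} \ge \frac{(pn)^k}{k!\,\e^{pn}}\cdot \e^{-((pn)^2+k^2)/n}$), applies Stirling to $k!$, and then observes that the correction exponent $((pn)^2+k^2)/n$ is $o(1)$ under the hypotheses. Your approach is more elementary and self-contained (no external citation), and as you note it actually yields the two-sided estimate $\log\prob{\xi=k} = -pn\,\phi(\eta) + O(\log k)$, whereas the paper only records the lower bound it needs. The paper's version is shorter on the page because the Taylor-expansion bookkeeping is hidden in the cited inequality; otherwise the two arguments are doing the same work.
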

It is also convenient to have the following inequalities (which follow from inequalities above) when one aims at probabilities of the form $n^{-c}$.
\begin{lemma}
  \label{lem:n_alpha}
  Let $D = D(n,p)$ be as defined in~\eqref{eq:LLN_maxdeg_sparse}. Assume that $pn \ll \log n$.
  Assume that~$\alpha = \alpha(n)>0$ satisfies $|\log \alpha| = o\Bigpar{\log \frac{\log n}{pn} }$, which includes any constant $\alpha > 0$. Then the random variable $\xi \sim \Bin(n,p)$ satisfies 
\begin{equation}
  \label{eq:n_alpha_upp}
  \prob{\xi \ge \alpha D}   \le n^{-\alpha(1 + o(1))}\,.
\end{equation}
Furthermore, under the additional assumption~$pn \ge 1$, for any constants $\alpha, \eps > 0$ we~have 
\begin{equation}
  \label{eq:n_alpha_low}
  \prob{\alpha D \le \xi < (\alpha + \eps)D} \ge n^{-\alpha + o(1)}\,.
\end{equation}
\end{lemma}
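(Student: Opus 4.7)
The plan is to prove the two inequalities separately, both by direct application of the binomial estimates from this section. Throughout it is convenient to set $L := \log(\log n / pn)$, so that $D = \log n / L$ and $L \to \infty$ under the hypothesis $pn \ll \log n$.

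For the upper bound~\eqref{eq:n_alpha_upp}, I would simply apply the Chernoff bound~\eqref{eq:Chern_upper_simpler} with $x = \alpha D$, which reduces the task to estimating the exponent $\alpha D \log(\alpha D / \e pn)$. A direct computation gives
\[
\log \frac{\alpha D}{\e pn} \, = \, \log \alpha + \log \log n - \log L - 1 - \log (pn) \, = \, L - \log L - 1 + \log \alpha,
\]
and the assumptions $L \to \infty$ and $|\log \alpha| = o(L)$ yield $\log(\alpha D/\e pn) = L(1+o(1))$. Multiplying by $\alpha D = \alpha \log n/L$ then gives $\alpha \log n (1 + o(1))$, so the Chernoff bound becomes $\exp\bigpar{-\alpha \log n (1 + o(1))} = n^{-\alpha(1+o(1))}$, which is~\eqref{eq:n_alpha_upp}.

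For the lower bound~\eqref{eq:n_alpha_low} (with $\alpha, \eps > 0$ constant and $pn \ge 1$), I would pick any integer $k \in [\alpha D, (\alpha + \eps) D)$, which exists for $n$ large since $pn \ge 1$ forces $L \le \log \log n$ and hence $\eps D \ge \eps \log n / \log \log n \to \infty$. Then $\prob{\alpha D \le \xi < (\alpha + \eps)D} \ge \prob{\xi = k}$, and writing $k = (1 + \eta) pn$ the hypotheses $k = O(\log n) \ll \sqrt{n}$ and $pn \ll \sqrt{n}$ of Proposition~\ref{prop:lower_poisson} are satisfied, giving
\[
\prob{\xi = k} \ge \exp\bigpar{-pn \, \phi(\eta) + O(\log k)}.
\]
Substituting $1+\eta = k/pn$ into the definition of $\phi$ yields the identity $pn \, \phi(\eta) = k \log(k/\e pn) + pn$, which reduces the matter to the same asymptotic $k \log(k/\e pn) \sim \alpha \log n$ evaluated above, provided $pn = o(k)$. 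This follows from $pn = o(D)$: setting $s := \log n / pn \to \infty$, one has $pn/D = pn L / \log n = s^{-1} \log s \to 0$. Since $\log k = O(\log \log n) = o(\log n)$, the error term is absorbed and we arrive at $\prob{\xi = k} \ge n^{-\alpha + o(1)}$, as required.

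The main (modest) obstacle is the bookkeeping around the double logarithm $L$: one has to verify that the lower-order terms $\log L$, $\log \alpha$, and $\log k$ are truly negligible at the relevant scales. The hypothesis $|\log \alpha| = o(L)$ in~\eqref{eq:n_alpha_upp} is precisely calibrated to make the expansion of $\log(\alpha D/\e pn)$ collapse to $L(1+o(1))$, while the extra assumption $pn \ge 1$ in~\eqref{eq:n_alpha_low} ensures that $D$ grows fast enough for the window $[\alpha D, (\alpha+\eps) D)$ to contain an integer to which Proposition~\ref{prop:lower_poisson} can be applied.
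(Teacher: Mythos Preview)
Your proof is correct and follows essentially the same approach as the paper: the Chernoff bound~\eqref{eq:Chern_upper_simpler} for~\eqref{eq:n_alpha_upp}, and Proposition~\ref{prop:lower_poisson} for~\eqref{eq:n_alpha_low}, with the same asymptotic bookkeeping around $L = \log(\log n / pn)$. The only (cosmetic) difference is that for~\eqref{eq:n_alpha_low} the paper first shows $\prob{\xi \ge \alpha D} \ge n^{-\alpha + o(1)}$ and then subtracts the tail $\prob{\xi \ge (\alpha+\eps)D} \le n^{-(\alpha+\eps)+o(1)}$, whereas you bound a single point probability in the window; note, however, that your ``any integer $k$'' should really be $k = \lceil \alpha D \rceil$, since the claimed asymptotic $k\log(k/\e pn) \sim \alpha \log n$ needs $k \sim \alpha D$ and fails for $k$ near $(\alpha+\eps)D$.
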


\subsection{Extremal degrees of binomial random graphs}% 
We now recall some well-known results about the maximum and minimum degrees in the binomial random graph~$\Gnp$, denoted by $\Delta=\Delta(\Gnp)$ and $\delta=\delta(\Gnp)$ respectively.
\refP{prop:max_deg_dense} gives detailed information about the extremal degrees degrees in the `dense' case~${pqn \gg \log n}$. 
\begin{proposition}
  \label{prop:max_deg_dense}
\begin{equation}
\label{eq:minmaxdegree}
\frac{\Delta -pn }{\sqrt{2pqn \log n}} \pto 1
\quad 
\text{ and }
\quad 
\frac{pn - \delta}{\sqrt{2pqn \log n}} \pto 1\,.
\end{equation}
\end{proposition}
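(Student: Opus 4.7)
The plan is a classical second moment argument. I would prove the maximum degree claim; the minimum degree claim then follows by complementation, since $\overline{\Gnp}\sim G_{n,q}$ implies $\delta(\Gnp) = (n-1) - \Delta(G_{n,q})$, and the hypothesis $pqn\gg \log n$ is symmetric in~$p$ and~$q$.

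For the upper bound, fix $\eps>0$ and set $t_+:=(1+\eps)\sqrt{2pqn\log n}$. Since $d(v)\sim \Bin(n-1,p)$, I would invoke a Bernstein-type moderate-deviation inequality, which, in view of $t_+\ll pqn$ (equivalent to $pqn\gg\log n$), yields $\Pr[d(v) \ge (n-1)p + t_+]\le n^{-(1+\eps)^2(1-o(1))}$; a union bound over $v\in[n]$ then gives $\Pr[\Delta\ge pn+t_+]\to 0$. A caveat here is that the Poisson-type Chernoff bound~\eqref{eq:Chern_upper} is too weak on its own: its exponent is $t_+^2/(2pn)$, missing the factor~$q$ that is essential once $p$ is bounded away from~$0$.

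For the lower bound, fix $\eps>0$, set $t_-:=(1-\eps)\sqrt{2pqn\log n}$, and let $N := |\{v\in[n]:\, d(v) \ge (n-1)p + t_-\}|$. First, a sharp local estimate for $\Bin(n-1,p)$ in the moderate-deviation regime (via a local central limit theorem or a direct Stirling calculation) provides the matching lower tail $\Pr[d(v) \ge (n-1)p + t_-] = n^{-(1-\eps)^2(1+o(1))}$, so that $\E N \to \infty$. Second, to control $\Var N = \sum_{v \ne w}\Cov(Y_v, Y_w)$ with $Y_v := \indic{d(v)\ge (n-1)p + t_-}$, I would exploit that conditioning on the edge variable $X_{vw}$ decouples the two degrees: given $X_{vw}$, the variables $d(v)-X_{vw}$ and $d(w)-X_{vw}$ are independent $\Bin(n-2,p)$. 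Since shifting the threshold by $O(1)$ changes the tail only by a $1+o(1)$ factor in this regime, one obtains $\Pr[Y_vY_w=1] \le (1+o(1))\Pr[Y_v=1]^2$, hence $\Var N = o((\E N)^2)$. Chebyshev then gives $N\ge 1$ whp, i.e., $\Delta \ge pn + t_-$ whp.

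The main technical obstacle is obtaining sharp upper and lower moderate-deviation estimates with matching Gaussian constant $t^2/(2pqn)$ in the exponent, uniformly in $p=p(n)$ across the allowed range. This requires going beyond the Poisson-type bounds of \refS{sec:prelim} (whose exponents effectively carry $pn$ rather than $pqn$) to a Bernstein-type concentration for the upper bound and an LCLT/Stirling-based analysis for the lower bound. Once these sharp tail estimates are in hand, the union bound and the second moment computation are routine.
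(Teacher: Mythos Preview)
Your proposal is correct and follows essentially the same route as the paper: complementation to reduce to the maximum degree, a Bernstein-type bound plus union bound for the upper tail, and a second-moment argument together with a sharp Stirling/LCLT lower bound on the binomial tail for the lower tail. The only cosmetic difference is that the paper outsources the second-moment step to a black-box criterion from Bollob\'as (stated here as Theorem~\ref{thm:threshold}: $\prob{\Delta \ge k}\to 1$ whenever $n\cdot\prob{\Bin(n-1,p)\ge k}\to\infty$), and also cites \citeref{Bollobas01}{Corollary~3.4} directly for the range $pqn\gg(\log n)^3$, whereas you carry out the covariance computation by hand.
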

\begin{proposition}
 \label{prop:degree_range}
Let $C_0 > 1$ be a constant. If $pn \ge C_0 \log n$, then there are constants $0 < C_1 < 1 < C_2$ depending on~$C_0$ such~that whp $C_1 pn \le \delta \le \Delta \le C_2 pn$.
\end{proposition}
In the `sparse' case~$1 \ll pn = O(\log n)$, \refP{prop:max_deg_asymp} states that the asymptotics of the maximum degree~$\Delta$ of~$\Gnp$ is more involved (and, when~$pn \ll \log n$, then the minimum degree of~$\Gnp$ is in fact typically zero, see \refS{sec:minimum}). 
Asymptotic properties of the inverse~$\phi^{-1}$ of the function~$\phi$ defined  in~\eqref{eq:entropy} are discussed in~\refR{rem:phi}~below. 
\begin{proposition}
  \label{prop:max_deg_asymp}
If~$1 \ll pn = O(\log n)$, then 
\begin{equation}
\label{eq:LLNmaxdeg}
\frac{\Delta - pn}{\alpha_n} \pto 1 \,,
  \end{equation}
where~$\alpha_n := pn \phi^{-1}\left(\frac{\log n}{pn}\right)$.
If furthermore~$pn \ll \log n$, then 
\begin{equation}
\label{eq:alpha_n_sim}
  \alpha_n \sim 
  \frac{\log n}{\log \frac{\log n}{pn} } \gg pn \,.
\end{equation}
\end{proposition}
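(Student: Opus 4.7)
The plan is to establish matching one-sided bounds $\prob{\Delta \geq pn + (1+\eps)\alpha_n} \to 0$ and $\prob{\Delta \geq pn + (1-\eps)\alpha_n} \to 1$ for every fixed $\eps > 0$ via the first- and second-moment methods, and then derive the asymptotic formula for $\alpha_n$ from the behavior of $\phi^{-1}$ at infinity. Throughout, set $\eta^* := \phi^{-1}(\log n / pn)$, so $pn\phi(\eta^*) = \log n$ and $\alpha_n = pn\eta^*$. The key analytic input is that $\phi$ is convex with $\phi(0) = 0$, which gives $\phi(\lambda x) \geq \lambda \phi(x)$ for $\lambda \geq 1$ and $\phi(\lambda x) \leq \lambda \phi(x)$ for $\lambda \in [0,1]$, thereby converting the defining equation of $\eta^*$ into the tail estimates we need.

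For the upper bound, fix $\eps > 0$ and let $k^+ := \ceil{pn(1 + (1+\eps)\eta^*)}$. Convexity yields $pn\phi((1+\eps)\eta^*) \geq (1+\eps)\log n$, so the Chernoff bound~\eqref{eq:Chern_upper} applied to $d(v) \sim \Bin(n-1,p)$ (stochastically dominated by $\Bin(n,p)$) gives $\prob{d(v) \geq k^+} \leq n^{-(1+\eps)}$, and a union bound over $v \in [n]$ then yields $\prob{\Delta \geq k^+} \leq n^{-\eps} \to 0$.

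For the lower bound, set $k^- := \floor{pn(1 + (1-\eps)\eta^*)}$, so that $pn\phi((1-\eps)\eta^*) \leq (1-\eps)\log n$. Since $k^- = O(\log n)$ in our regime, \refP{prop:lower_poisson} gives $\prob{d(v) = k^-} \geq \exp(-(1-\eps)\log n + O(\log\log n)) = n^{-(1-\eps) + o(1)}$, so $Y := \sum_{v \in [n]} \indic{d(v) \geq k^-}$ satisfies $\E Y \geq n^{\eps - o(1)} \to \infty$. For the variance, conditioning on whether the edge $uv$ is present exposes the explicit formula $\Cov(\indic{d(u) \geq k^-}, \indic{d(v) \geq k^-}) = pq \prob{\Bin(n-2,p) = k^- - 1}^2$, and summing over ordered pairs $u \neq v$ yields a total covariance of $O(pq)(\E Y)^2 = o((\E Y)^2)$ since $pq = O((\log n)/n) \to 0$. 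Chebyshev's inequality then gives $Y > 0$, and hence $\Delta \geq k^-$, \whp.

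Finally, the asymptotic formula for $\alpha_n$ is read off from $\phi^{-1}$: since $\phi(x) \sim x\log x$ as $x \to \infty$, the identity $\phi(\phi^{-1}(y)) = y$ yields $\phi^{-1}(y)\log\phi^{-1}(y) \sim y$, from which a short calculation gives $\phi^{-1}(y) \sim y/\log y$. Substituting $y = \log n/pn \to \infty$ (valid when $pn \ll \log n$) and multiplying by $pn$ produces $\alpha_n \sim \log n/\log(\log n/pn)$, while $\alpha_n/pn = \phi^{-1}(\log n/pn) \to \infty$ gives $\alpha_n \gg pn$. The main obstacle is tracking error terms carefully across both subregimes of $1 \ll pn = O(\log n)$: in the `very sparse' case $pn \ll \log n$ the $O(\log k^-) = O(\log\log n)$ slack in \refP{prop:lower_poisson} is comfortably $o(\log n)$, whereas in the boundary case $pn \asymp \log n$ the value $\eta^*$ is only a positive constant, so one must verify that the convexity-induced gaps of order $\eps$ in both tail bounds survive the rounding used in defining $k^{\pm}$.
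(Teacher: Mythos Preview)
Your approach is correct and matches the paper's: Chernoff for the upper tail, the point-probability estimate of \refP{prop:lower_poisson} for the lower, and the $\phi^{-1}$ asymptotics for \eqref{eq:alpha_n_sim}. The only difference is packaging: the paper invokes the Bollob\'as criterion \refT{thm:threshold} (which already encapsulates the second-moment argument) rather than computing $\Var Y$ directly, and it perturbs inside $\phi^{-1}$ via $\eta_\pm := \phi^{-1}\bigl((1\pm\eps)\tfrac{\log n}{pn}\bigr)$ instead of using convexity of $\phi$ to scale outside by $(1\pm\eps)\eta^*$. These are equivalent reformulations.

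One slip to fix: the claim that the off-diagonal covariance sums to $O(pq)(\E Y)^2$ is not correct in the subregime $pn \ll \log n$. There $k^-/(pn) \sim (1-\eps)\eta^* \to \infty$, and the ratio of consecutive binomial point masses gives $\prob{\Bin(n-2,p)=k^--1}\big/\prob{\Bin(n-2,p)\ge k^-}$ of order $k^-/(pn)\asymp\eta^*$, so that $\prob{\Bin(n-2,p)=k^--1}$ exceeds $\prob{d(v)\ge k^-}$ by a factor $\Theta(\eta^*)$ rather than $O(1)$. The covariance sum is therefore $\Theta\bigl(pq\,(\eta^*)^2\bigr)(\E Y)^2$, not $O(pq)(\E Y)^2$. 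This is harmless for your conclusion: since $\eta^* = O\bigl(\tfrac{\log n}{pn}\bigr)$ throughout the range $1\ll pn = O(\log n)$, one has $pq\,(\eta^*)^2 = O\bigl(\tfrac{(\log n)^2}{pn^2}\bigr)\to 0$, so Chebyshev still yields $Y>0$ whp once you insert this extra factor.
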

\begin{remark}
   \label{rem:phi} 
   The function~$\phi : [-1, \infty) \to [0, \infty)$ satisfies the following well-known asymptotics that can be proved by basic calculus: 
\begin{equation}
\label{eq:phi_asymp}
  \phi(x) \sim \begin{cases}
    x^2/2 \quad & \text{if $x \to 0$}\,, \\
    x \log x \quad & \text{if $x \to \infty$}\,.
  \end{cases}  
\end{equation}
When restricted to $[0, \infty)$, $\phi$ is an increasing bijection onto $[0, \infty)$ and therefore has an increasing inverse $\phi^{-1} : [0, \infty) \to [0, \infty)$. 
In view of \eqref{eq:phi_asymp}, the function $\phi^{-1}$ satisfies
  \begin{equation}
    \label{eq:phi_inv}
    \phi^{-1}(y) \sim \begin{cases}
      \sqrt{2y} \quad & \text{if $y \to 0$}\,, \\
      \frac{y}{\log y} \quad &\text{if $y \to \infty$}\,.
    \end{cases}
  \end{equation}
\end{remark}

\subsection{Expectation and variance of  tree extension counts}
Recall that~$X_v=X_{T,v}$ denotes the number of $T$-extensions of~$v$.
For trees~$T$ we now record the (routine) asymptotics of the expectation and variance of~$X_v$. 
\begin{proposition}
  \label{prop:variance}
Fix a rooted tree~$T$, with root of degree~$a$. 
Let~$\mu_T$ and~$\sigma_T^2$ denote the expectation and variance of~${X_v = X_{T,v}}$, for some vertex~$v \in [n]$. 
  If~$pn \gg 1$, then
  \begin{equation}
\label{eq:var_trees}
\mu_T = (pn)^{e_T}(1 + O(1/n))
\qquad \text{ and } \qquad 
\sigma_T^2 \sim a^2 \mu_T^2 q/(pn)\,.
  \end{equation}
\end{proposition}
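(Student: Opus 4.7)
The first asymptotic is direct: by linearity of expectation and the definition of~$X_v$, one has $\mu_T = (n-1)_{v_T-1} p^{e_T}$; using $(n-1)_{v_T-1} = n^{v_T-1}(1+O(1/n))$ together with~$e_T = v_T-1$ (since~$T$ is a tree) yields $\mu_T = (pn)^{e_T}(1 + O(1/n))$.

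For the variance, my plan is to use the standard covariance expansion
\[
\sigma_T^2 = \sum_{\phi, \psi} \Cov(I_\phi, I_\psi),
\]
over ordered pairs of injective maps $\phi, \psi : V(T) \to [n]$ with $\phi(\rho) = \psi(\rho) = v$, where~$I_\phi$ indicates that $\phi(T) \subseteq \Gnp$. Writing $k = k(\phi,\psi) \ge 0$ for the number of edges of~$K_n$ lying in both $\phi(T)$ and $\psi(T)$, one has $\Cov(I_\phi, I_\psi) = p^{2e_T-k}(1 - p^k)$, which vanishes when $k=0$. I would then group the contributing pairs by their \emph{gluing pattern}: the subforest $F$ of $T$ formed by the shared edges, together with the identifications of any further vertices that $\phi$ and $\psi$ happen to share. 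Since~$T$ is a tree, $F$ is a forest; if the combined graph has~$s$ shared vertices (including~$\rho$) and $F$ has $k \ge 1$ edges, then $s \ge k+1$, with equality iff $F$ is a subtree containing~$\rho$. The number of pairs realising a given gluing pattern is $O(n^{2v_T - s - 1})$, each contributing $p^{2e_T-k}(1-p^k)$, so using~$e_T = v_T - 1$, the total per pattern is $O\bigpar{(pn)^{2e_T-k}(1-p^k)}$, which is maximised at $k=1$ with~$F$ a single edge incident to~$\rho$.

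It remains to pin down the leading asymptotic, which I would do by direct enumeration: choose one of the $a$ root-edges of $T$ in each copy ($a^2$ options), identify their non-root endpoints, and injectively embed the remaining vertices into $[n]$; this gives $a^2 (n-1)_{v_T-1}(n-2)_{v_T-2} = a^2 n^{2e_T-1}(1+O(1/n))$ pairs, each contributing $p^{2e_T-1}q$, for a total of $a^2 (pn)^{2e_T-1} q\,(1+o(1)) = a^2 \mu_T^2 q/(pn)\,(1+o(1))$, using $\mu_T^2 \sim (pn)^{2e_T}$ and $pn \gg 1$. The main (admittedly mild) obstacle I foresee is bookkeeping: the above count over-counts pairs sharing two or more root-edges, but any such pair forces an extra vertex identification and hence contributes only $O((pn)^{2e_T - 2})$ to the variance. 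Similarly, all other gluing patterns---with $k \ge 2$, or $k=1$ but the shared edge not at~$\rho$ (in which case $s \ge k+2$, costing a further factor $1/n$)---each contribute $O((pn)^{2e_T - 2})$. Under the assumption $pn \gg 1$ these corrections are~$o(\mu_T^2/(pn))$, and combining with the leading term gives $\sigma_T^2 \sim a^2 \mu_T^2 q/(pn)$ as claimed.
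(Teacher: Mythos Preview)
Your approach is essentially identical to the paper's: both expand the variance as a covariance sum over pairs of extensions, classify pairs by the number $k$ of shared edges (and shared vertices), and identify the leading contribution as coming from pairs sharing exactly one edge incident to the root. Your enumeration of the leading term and the observation that $s \ge k+1$, with equality iff the shared edges form a subtree through~$\rho$, mirror the paper's argument.

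There is one small but genuine slip in the final step. You bound the subleading gluing patterns by $O((pn)^{2e_T-2})$ and then assert these are $o(\mu_T^2/(pn))$. That is true, but $\mu_T^2/(pn)$ is not the target: you need the corrections to be $o(a^2\mu_T^2 q/(pn))$, and when $q \to 0$ (which the hypothesis $pn \gg 1$ allows) the bound $O((pn)^{2e_T-2})$ need not be $o((pn)^{2e_T-1}q)$. The fix is already implicit in your own computation: since $1 - p^k = q(1+p+\cdots+p^{k-1}) \asymp q$ for bounded~$k$, every covariance term---and hence every error contribution---carries a factor of~$q$, so the true bound is $O((pn)^{2e_T-2}q)$, which is $o((pn)^{2e_T-1}q)$ once $pn \gg 1$. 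The paper's proof keeps this~$q$ factor throughout for exactly this reason.
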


\section{Trees: the dense case} 
\label{sec:proof_dense}
In this section we deal with extension counts of an arbitrary rooted tree~$T$ in the `dense' case ${pqn \gg \log n}$.
In particular, in~\refS{sec:proof_dense_deduction} we deduce Theorem~\ref{thm:max_ext} and Corollaries~\ref{cor:breaks},\ref{cor:Gumbel}
from the key result Proposition~\ref{prop:degtoext}, which is subsequently proved in~\refS{sec:proof_dense_proof}.

\subsection{Proof of the main result for trees: Theorem~\ref{thm:max_ext} and Corollaries~\ref{cor:breaks},\ref{cor:Gumbel}}\label{sec:proof_dense_deduction} 
In the following proofs of Theorem~\ref{thm:max_ext} and Corollaries~\ref{cor:breaks},\ref{cor:Gumbel}, to avoid clutter we shall write $o_p(a_n)$ for a sequence of random variables~$X_n$ such that $X_n/a_n \pto 0$, as usual (see~\citeref{JLRbook}{p.~11}). 
\begin{proof}[Proof of Theorem~\ref{thm:max_ext}]
The claimed variance asymptotics of~$\sigma_T^2$ hold by Proposition~\ref{prop:variance}. 
Gearing up towards~\eqref{eq:LLNvariance}, set $\eta := \sqrt{2q \log n /(pn)}$. 
Proposition~\ref{prop:max_deg_dense} implies that 
\[
  \max_{v \in [n]} d(v) = pn \left( 1 + \eta \left( 1 + o_p(1)\right) \right).
\]
Considering the error term in \eqref{eq:Yt_conc}, note that condition $pqn \gg \log n$ implies 
\[
C(pn)^{e_T -1}\sqrt{\log n} \ll \eta(pn)^{e_T}
\]
and $\eta \to 0$. Using the conclusion~\eqref{eq:Yt_conc} of Proposition~\ref{prop:degtoext}  it then follows~that 
\begin{align*}
M_n = \max_{v \in [n]} X_v  &= (pn)^{e_T} \cdot \Bigpar{1 + \eta \bigpar{(1 + o_p(1)}}^{a}+ o_p(\eta (pn)^{e_T}) \\
&= (pn)^{e_T} \left(1 + a \eta + o_p(\eta) \right)\,.
\end{align*}
This implies~\eqref{eq:LLNvariance} by noting (see \eqref{eq:var_trees}) that~$\mu_T  =(pn)^{e_T}( 1 + O(1/n)) = (pn)^{e_T}(1 + o(\eta))$~and 
\[
a \eta(pn)^{e_T} \sim \sqrt{\tfrac{a^2 q \cdot 2 \log n}{pn}} \mu_T \sim \sigma_T\sqrt{2 \log n}\,,
\]
completing the proof of Theorem~\ref{thm:max_ext}. 
\end{proof}

\begin{proof}[Proof of Corollary~\ref{cor:breaks}]
With foresight, we define $f(C) := (1 + \phi^{-1}(1/C))^a$, where~$\phi^{-1}$ is the inverse of the function~$\phi$. The desired properties of~$f$ follow from Remark~\ref{rem:phi}.
Proposition~\ref{prop:max_deg_asymp} then implies (using continuity of~$\phi^{-1}$) that
\[
\max_{v \in [n]} d(v) = pn \cdot \bigpar{1 + \phi^{-1}(1/C) +  o_p(1)}\,,
\]
whereas the error term in \eqref{eq:Yt_conc} is $o( (pn)^{e_T} )$.
Using the conclusion~\eqref{eq:Yt_conc} of Proposition~\ref{prop:degtoext}, 
in view of~$f(C) > 1$ it then follows~that 
\begin{align*}
M_n = \max_{v \in [n]} X_v  &= (pn)^{e_T} \cdot \Bigpar{ 1 + \phi^{-1}(1/C) + o_p(1)}^{a}+ o_p((pn)^{e_T}) \\
&= (pn)^{e_T} \cdot f(C) \cdot (1 + o_p(1)) \,,
\end{align*}
which together with~$(pn)^{e_T} \sim \mu_T$ establishes~\eqref{eq:cor:breaks}, 
completing the proof of Corollary~\ref{cor:breaks}. 
\end{proof}
\begin{proof}[Proof of \refC{cor:Gumbel}]
Let~$B_n := \sqrt{pqn/(2\log n)}$. 
  We rely on the fact the maximum degree $\Delta=\Delta(\Gnp)$ satisfies, for any fixed~$x \in \RR$,
  \begin{equation}
\label{eq:maxdeg_Gumbel}
    \lim_{n \to \infty} \prob{\Delta \le A_n + B_n x} = \exp\bigpar{-\e^{-x}}\,,
  \end{equation}
where~$A_n$ is a suitable sequence satisfying~$A_n = \Omega(pn)$; under the stronger assumption $pqn \gg \log^3 n$ this follows from  \citeref{Bollobas01}{Corollary~3.4}, which gives an explicit formula for~$A_n$, while in the remaining cases \eqref{eq:maxdeg_Gumbel} follows from \citeref{Ivchenko73}{Theorems~5 and~8} 
with~$A_n$ of a more complicated form.  
Writing~$a$ for the root degree of~$T$, we~set
\[
a_n := A_n^a(pn)^{e_T - a}\qquad \text{and} \qquad b_n := aA_n^{a-1}B_n(pn)^{e_T -a}.   
\]
Denoting the error term in~\eqref{eq:Yt_conc} by $\theta_n:= C(pn)^{e_T-1}\sqrt{\log n}$, 
  Proposition~\ref{prop:degtoext} implies that
  \begin{multline*}
    \Pr(\hat M_n \le a_n + b_n x - \theta_n) - o(1) 
    \le \prob{M_n \le a_n + b_n x} 
    \le \Pr(\hat M_n \le a_n + b_n x + \theta_n) + o(1) 
  \end{multline*}
for the random variable 
\[
\hat M_n := \max_{v \in [n]} d(v)^a(pn)^{e_T - a} = \Delta^a (pn)^{e_T -a}. 
\]
Since by \eqref{eq:maxdeg_Gumbel} we have
  \begin{equation*}
    \prob{\hat M_n \le (A_n + B_n x)^a (pn)^{e_T - a}}
    = \prob{\Delta \le A_n + B_n x} 
    \to \exp\bigpar{-\e^{-x}}\,,
  \end{equation*}
  using continuity of~$\exp (-\e^{-x})$ it thus suffices to show that 
  \begin{equation*}
  (A_n + B_nx)^a(pn)^{e_T -a} = a_n + b_n (x + o(1))
     \qquad \text{and} \qquad 
     \theta_n = o(b_n) \,.
  \end{equation*}
The first estimate follows by expanding the left-hand side, using that $B_n \ll A_n$.
The second inequality holds because $A_n = \Omega(pn)$ and $pqn \gg (\log n)^2$ together imply
  \begin{equation*}
    \frac{\theta_n}{b_n} = \frac{C (pn)^{e_T - 1} \sqrt{\log n}} {a A_n^{a-1}B_n(pn)^{e_T -a}}
    = O(1) \cdot \sqrt{\frac{(\log n)^2}{pqn}} \ll 1 \,,
  \end{equation*}
which completes the proof of~\eqref{eq:cor:Gumbel}, as discussed. 
\end{proof}

\subsection{Proof of main technical result for trees: Proposition~\ref{prop:degtoext}}\label{sec:proof_dense_proof}
The following proof of Proposition~\ref{prop:degtoext} is based on induction, 
and the base case is the core of the matter: 
in order to obtain the desired `good' concentration estimate~\eqref{eq:Yt_conc}, here we shall first condition on the neighborhood of a potential root vertex, 
and then afterwards exploit the following variant of the bounded differences inequality due to Warnke~\cite{W16}, 
which conveniently allows us to take into account 
(a)~that the typical one-step changes can be much smaller than the worst case ones and~(b) that the underlying independent random variables are binary. 
(To clarify: Lemma~\ref{thm:McDExtV} follows from~\citeref{W16}{\mbox{Theorem~1.3} and eq.~{(1.5)}}, by setting~$c_k=r$, $d_k=R$, $\gamma_k = r/R$ and~$p_k=p$.) 
\begin{lemma}\label{thm:McDExtV}
Let~$X=(\xi_1, \ldots, \xi_N) \in \{0,1\}^{N}$ be a random vector with independent entries such that~$\prob{\xi_k=1}=p$ for all~$1 \le k \le N$.
Let~$f:\{0,1\}^{N} \to \RR$ be a function.
Assume that there is a set $\Gamma \subseteq \{0,1\}^{N}$ and numbers~$R \ge r \ge 0$ such that
\begin{equation}\label{eq:fTL}
|f(x)-f(\tilde{x})| \; \le \; \begin{cases}
		r & \;\text{if $x \in \Gamma$,}\\ 
		R & \;\text{otherwise,} 
\end{cases}
\end{equation}
whenever $x,\tilde{x} \in \{0,1\}^{N}$ differ in exactly one coordinate. 
Then, for all $t \ge 0$, 
\begin{equation}\label{McDExtV:Pr}
\prob{|f(X)-\E f(X)| \ge t} \; \le \; 2 \cdot \exp\left(-\frac{t^2}{8Npr^2+4rt}\right) + \prob{X \notin \Gamma} \cdot 2N R/r.
\end{equation}
\end{lemma}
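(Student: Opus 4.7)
The plan is to deduce \refL{thm:McDExtV} as a direct specialisation of the refined bounded differences inequality of Warnke, namely \citeref{W16}{Theorem~1.3 and eq.~(1.5)}, with the constant choices $c_k = r$, $d_k = R$, $\gamma_k = r/R$ and $p_k = p$ for every $k \in [N]$. That general result permits distinct coordinate-wise Lipschitz constants $c_k \le d_k$ on the good event and in the worst case respectively, and has a free parameter $\gamma_k \in (0,1]$ that trades off a `Gaussian' concentration term against a `bad-event' penalty. Equation~(1.5) in \cite{W16} further sharpens the concentration term when the underlying variables are Bernoulli with parameters $p_k$, which is precisely the setting we need.

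First I would record the effect of the substitution on the bad-event contribution: in Warnke's one-sided bound this term takes the form $\prob{X \notin \Gamma} \sum_{k} \gamma_k^{-1}$, and with the stated choices it collapses to $\prob{X \notin \Gamma} \cdot NR/r$; multiplying by $2$ to pass from the one-sided to the two-sided tail $|f(X) - \E f(X)| \ge t$ then yields the second summand $2NR/r \cdot \prob{X \notin \Gamma}$ of~\eqref{McDExtV:Pr}. For the Gaussian term, the `effective' step size in Warnke's framework becomes $c_k + \gamma_k(d_k - c_k) = r + (r/R)(R - r) = 2r - r^2/R \le 2r$. Using eq.~(1.5) of \cite{W16}, the predictable quadratic variation is in turn bounded by $\sum_k p_k\bigpar{c_k + \gamma_k(d_k - c_k)}^2 \le N p (2r)^2 = 4Npr^2$, and plugging this into a Freedman-type exponent of the form $t^2/\bigpar{2(V + Ct)}$ with $V = 4Npr^2$ and $C = 2r$ produces the denominator $8Npr^2 + 4rt$ appearing in~\eqref{McDExtV:Pr}, with the leading factor $2$ coming once more from the two-sided tail.

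Since at this stage only arithmetic verification of the substitution remains, no new ideas are required of us. The genuine work, which we inherit from \cite{W16} but do not repeat, is the construction of an auxiliary Doob martingale whose differences are `softly truncated' to $c_k + \gamma_k(d_k - c_k)$ on the good event, the control of its conditional variances via the Bernoulli parameters $p_k$, and the application of Freedman's inequality. The main conceptual obstacle in that construction is designing the truncation so that the bad-event leakage is accounted for precisely by the $\gamma_k^{-1}$ factors while keeping the almost-sure Lipschitz bound on the modified martingale at exactly $c_k + \gamma_k(d_k - c_k)$; this balance is what allows the $R$-sized worst case jumps to appear only in the (typically negligible) additive error $\prob{X \notin \Gamma} \cdot 2NR/r$ and not inside the exponent.
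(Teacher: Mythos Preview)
Your proposal is correct and matches the paper's approach exactly: the paper itself does not give a proof but simply states (just before the lemma) that it follows from \citeref{W16}{Theorem~1.3 and eq.~(1.5)} by setting $c_k=r$, $d_k=R$, $\gamma_k=r/R$, and $p_k=p$, which is precisely the specialisation you carry out and verify in more detail.
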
 

The following lemma encapsulates the core matter of the base case of the inductive proof of Proposition~\ref{prop:degtoext}.
Intuitively, inequality~\eqref{eq:technical} implies that whp the following holds: for all vertices~$v \in [n]$ with `typical' degrees~$d(v)$, the rooted extension count~$X_{T,v}$ does not deviate too much from their `anticipated' value~$\mu_{d(v)}$, where
\begin{gather}
\label{def:mus}
  \mu_s := \Ec{X_{T, v}}{d(v) = s} = (s)_a\cdot p^{e_T-a}(n - a - 1)_{e_T-a}\,,\\
\label{def:eps}
\eps := \sqrt{\log n}/(pn) \,.
\end{gather}
\begin{lemma}
  \label{lem:crux}
Let $T$ be a rooted tree of height two.
    Let $C_0 > 1, 0 < C_1 < C_2 < 1$ be constants. Then there is a constant $C>0$ such that if $pn \ge C_0 \log n$, then
    \begin{equation}
     \label{eq:technical}    
\sum_{v \in [n]} \prob{|X_{T, v} - \mu_{d(v)}| \ge C \eps (pn)^{e_T}, \; d(v) \in [C_1 pn , C_2 pn]}  \; = \; o(1)\,.
    \end{equation}
\end{lemma}
\begin{proof}
In the remainder we will define further positive constants~$C_3, C_4$, and $C_5$, where each $C_i$ may depend on $C_{0}, \ldots, C_{i-1}$ and~$e_T$.
Since (i)~by symmetry the probabilities in \eqref{eq:technical} equal for all vertices~$v \in [n]$ and (ii)~the inequality $\prob{\mathcal{A}, \mathcal{B}} \le \Pc{\mathcal{A}}{\mathcal{B}}$ holds for arbitrary events $\mathcal{A}, \mathcal{B}$, to prove~\eqref{eq:technical} it suffices to consider the vertex~$v=1$ and~prove
\begin{equation}
\label{eq:smaller_one_over_n}
\max_{s \in [C_1 pn, C_2 pn] \cap \mathbb{Z}} \Pc{|X_{T, 1} - \mu_{d(1)}| \ge C_5 \eps (pn)^{e_T} }{ d(1) = s}  \; \ll \; 1/n\,. 
\end{equation}

Fix~$s$ that attains the maximum in \eqref{eq:smaller_one_over_n} and consider the set $S := \{2, \ldots, s+1\}$. By symmetry 
\[
\Pc{|X_{T, 1} - \mu_{d(1)}| \ge C_5 \eps (pn)^{e_T}}{d(1) = s} = \Pc{|X_{T, 1} - \mu_{d(1)}| \ge C_5 \eps (pn)^{e_T}}{N(1) = S} \ .
\]
Henceforth we consider the conditional probability space with respect to the event $N(1) = S$, and use the shorthand $\Pr_s, \E_s$ for the corresponding conditional probabilities and expectations. 
In particular, we have~$\mu_s = \E_s{X_{T,1}}$ and
\begin{equation}\label{eq:Gv:ineq}
  \Pc{|X_{T, 1} - \mu_{d(1)}| \ge C_5 \eps (pn)^{e_T}}{ d(1)=s } 
  = \Pr_s(|X_{T, 1} - \E_s{X_{T,1}}| \ge C_5 \eps (pn)^{e_T})\,.
\end{equation}
Note that this conditional probability space corresponds to~$\binom{n-1}{2}$ independent binary random variables, each with success probability~$p$ (which encode the status of the pairs~$xy$ with~${x,y \neq v}$, i.e., whether they are an edge or not). 
Since $T$ has height two, $X_{T,1}$ is already determined~by
\begin{equation}
  \label{eq:N_upper}
N:=s (n- 1-s)+\tbinom{s}{2} \le s n \le C_2 pn^2  
\end{equation}
 of these random variables, 
namely the status of the pairs~$xy$ with at least one element in~$S$; we denote these independent random variables by~$X=(\xi_1, \ldots, \xi_N) \in \{0,1\}^N$. 
Hence there is a (deterministic) function~$f:\{0,1\}^N \to \RR$ such that~$X_{T,1}=f(X)$. 

In the following we prepare for applying Lemma~\ref{thm:McDExtV} to~$X_{T,1}=f(X)$. 
For the Lipschitz condition~\eqref{eq:fTL}, 
we need to control by how much~$X_{T,1}$ changes if we alter the status of one pair of vertices, i.e., whether it is an edge or not.
To this end, given a graph~$G$ with~$N(1)=S$, let~$X_{T,1,G,uv}$ denote the number of~$T$-extensions of vertex~$1$ in $G$ containing the edge~$uv$.
Note that any such $T$-extension must map some child $w$ of the root to a vertex~$ z\in\{u,v\}$ and then some child of $w$ to $\{u,v\} \setminus \{z\}$. 
To bound~$X_{T,1,G,uv}$ from above, we can thus first choose which vertices we map to $u$ and $v$ (in at most $2 \sum_{i \in [a]} a_i \le 2e_T$ ways),  
and then iteratively map the remaining~$v_T-3=e_T-2$ vertices of~$T$ in a suitable order (to a vertex adjacent to the root vertex~$1$ or a vertex already mapped to), yielding 
\begin{equation}\label{eq:ZT:extension}
X_{T,1,G,uv} \; \le \; 2e_T \cdot \Bigsqpar{\max_{w \in S \cup \{1\}}d_G(w)}^{e_T-2}\,,
\end{equation}
where~$d_G(w)$ denotes the degree of~$w$ in~$G$. 
As the degree~$d(1)=s \le C_2 pn$ of vertex~$1$ is  fixed (in the conditional probability space), 
we define the `good' event
\[
\Gamma := \Bigcpar{\max_{w \in S} d(w) \le C_3 pn } \qquad \text{ with } \qquad  C_3:= \e^2e_T+1.
\]
Note that~$\Gamma$ is determined by~$X=(\xi_1, \ldots, \xi_N)$, and that vertex~$w \in S$ has degree~$d(w)={1+Y_w}$, where~$Y_w \sim \Bin(n-2,p)$. 
Using a standard union bound argument over all~$w \in S$ and the inequality~$pn \ge C_0 \log n \ge \log n$, 
by applying the Chernoff bound~\eqref{eq:Chern_upper_simpler} to~$Y_w$ with~$x:=\e^2e_T pn$ (since~$d(w)=1+Y_{w} \ge C_3pn$ implies~$Y_{w} \ge x$) it follows~that 
\begin{equation}\label{eq:Pr:Gammafails}
\Pr_s(X \not \in \Gamma) \; \le \; |S| \cdot \exp \left( - \e^2e_Tpn \ln\left(\frac{\e^2e_T pn}{\e p(n-2)}\right) \right) \le n \cdot \e^{-\e^2e_Tpn} \ll n^{-5 e_T} \,.
\end{equation}
Next we claim that, whenever~$x,\tilde{x} \in \{0,1\}^N$ differ by at most one coordinate, then 
\begin{equation}\label{eq:fTL:concrete}
|f(x)-f(\tilde{x})| \; \le \; \begin{cases}
		C_4 (pn)^{e_T-2} & \;\text{if $x \in \Gamma$,}\\ 
		n^{e_T}  & \;\text{otherwise,} 
\end{cases}
\end{equation}
where $C_4 := 2e_T(2\max\{C_3,C_2\})^{e_T-2}$. 
The second bound~$n^{e_T}$ in~\eqref{eq:fTL:concrete} follows crudely, since for every $x$ we have~$0 \le f(x) \le n^{v_T-1} = n^{e_T}$. 
For the first bound in~\eqref{eq:fTL:concrete}, we imagine that we take~$x \in \Gamma$ and then flip one coordinate to obtain~$\tilde{x} \in \{0,1\}^N$, 
which corresponds to removing or adding one edge, say~$uv$. 
After adding the edge~$uv$ the degrees in the resulting graph~$G$ satisfy, by exploiting the degree property~${x \in \Gamma}$ and~$d_G(1)=s \le C_2 pn$, 
\begin{equation*}
\max_{w \in S \cup \{1\}}d_G(w) 
\le \max \left\{ C_3pn+1 , \: C_2pn \right\}
\le {2 \max\{C_3,C_2\}pn} \,,
\end{equation*}
which together with~\eqref{eq:ZT:extension} readily establishes~\eqref{eq:fTL:concrete}.

We are now ready to apply Lemma~\ref{thm:McDExtV} to~$X_{T,1}=f(X)$ with parameters
\begin{equation*}
r := C_4 (pn)^{e_T-2}, \qquad 
R := n^{e_T} \quad \text{ and } \quad  
t := C_5 \eps (pn)^{e_T} 
\end{equation*} 
and appropriately chosen constant $C_5$. Using~$N \le C_2 pn^2$ (see \eqref{eq:N_upper}) together with~$\eps=\sqrt{\log n}/pn \ll 1$, we now see that if we pick~$C_5>0$ large enough (in view of definitions of $C_3, C_4$ it can be chosen as a function of $e_T, C_1$ and $C_2$) then the exponent in inequality~\eqref{McDExtV:Pr} is at least
\begin{equation}\label{eq:TBDI_exponent}
\begin{split}
  \frac{t^2}{8Npr^2 + 4rt} & \ge \min\biggcpar{\frac{t^2}{16Npr^2}, \: \frac{t}{8r}} \\
  & \ge \min\biggcpar{\frac{(C_5 \eps)^2 (pn)^2}{16C_2C_4^2 }, \: \frac{C_5\eps (pn)^{2}}{8C_4}} \ge 2 \log n\,.
\end{split}
\end{equation}
Applying the typical bounded differences inequality~\eqref{McDExtV:Pr}, 
by combining the exponent estimate~\eqref{eq:TBDI_exponent} with the error probability estimate~\eqref{eq:Pr:Gammafails} 
it follows~that 
\begin{equation*}
\begin{split}
 \Pr_s \left(|X_{T, 1} - \E_s{X_{T,1}}| \ge C_5 \eps (pn)^{e_T} \right) & \; \le \;  
 2 \cdot \e^{-2\log n}
 + o(n^{-5e_T}) \cdot O(n^{2+e_T}) \ll 1/n \,,
\end{split}
\end{equation*}
   which together with~\eqref{eq:Gv:ineq} completes the proof of inequality~\eqref{eq:smaller_one_over_n} and thus Lemma~\ref{lem:crux} with the constant~$C = C_5$. 
\end{proof}
Let~$Z_{T,v}$ denote the number of graph homomorphisms of~$T$ into $\Gnp$ mapping the root of~$T$ to~$v$.  
In the upcoming proof of  Proposition~\ref{prop:degtoext}, it will be convenient to consider~$Z_{T,v}$ instead of~$X_{T,v}$, since we then do not need to worry about whether certain vertices coincide. 
We trivially have~$Z_{T,v} \ge  X_{T,v}$ (since~$Z_{T,v}$ includes non-injective homomorphisms), 
and the following claim asserts that that typically~$Z_{T,v} \approx  X_{T,v}$.
\begin{lemma}
\label{lem:noninj}
For every $C_0 > 1$ there is a constant $C'=C'(C_0,T) > 0$ such that for $pn \ge C_0 \log n$ we~have
\begin{equation}   
  \label{eq:Zt_negl}
	    \max_{v \in [n]} \:  |Z_{T,v} - X_{T,v}| \le C'(pn)^{e_T-1} \quad \text{whp}.
	  \end{equation}
\end{lemma}
\begin{proof}
  We shall bound non-injective homomorphisms in terms of the maximum degree $\Delta = \max_{v \in [n]} d(v)$, exploiting the loss of freedom due to repeated vertices. 
  In particular, every homomorphism counted by $Z_{T,v} - X_{T,v} \ge 0$ maps the~${v_T-1}=e_T$ non-root vertices to~$k \le {e_{T}-1}$ vertices, and so the image of this mapping contains a \mbox{$U$-extension} of~$v$, where~$U$ is some rooted tree with~$v_U = k+1$ vertices and~$e_{U} = k$ edges. 
  Taking all such rooted trees~$U$ into account (let $c_k$ be their number), by iteratively choosing the vertices of~$U$ in~$\Gnp$ (each adjacent to the root vertex~$v$ or an already chosen vertex) we infer via the whp maximum degree bound~$\Delta \le C_2pn$ from Proposition~\ref{prop:degree_range} (where $C_2 \ge 1$ depends on $C_0$) that,~whp, 
  \[
  \max_{v \in [n]} \:  |Z_{T,v} - X_{T,v}| \le \sum_{0 \le k \le e_T-1} c_k \Delta^{k} \le  C'(pn)^{e_T-1} \,,  
  \]
for a suitable constant~$C'=C'(C_2,T)>0$,   establishing the desired bound~\eqref{eq:Zt_negl}. 
\end{proof}
After these preparations, we are now ready to prove Proposition~\ref{prop:degtoext}.
\begin{proof}[Proof of Proposition~\ref{prop:degtoext}]
Recall that we are assuming $pqn \ge C_0 \log n$ for constant $C_0 > 1$. By Proposition~\ref{prop:degree_range}  there are two constants $0 < C_1 < 1 < C_2$ depending on~$C_0$ such~that 
\begin{equation}
\label{eq:probminmax}
\prob{\text{$C_1 pn \le d(v) \le C_2 pn$ for all $v \in [n]$}} \to 1 \,.
\end{equation}
Recall that $\eps = \sqrt{\log n}/(pn)$ as defined in~\eqref{def:eps}.
To complete the proof of Proposition~\ref{prop:degtoext}, we now claim that it suffices to show that, for some constant~$C_T>0$, we have
\begin{equation}
    \label{eq:Zt_conc}
	  \max_{v \in [n]} \left|\frac{Z_{T,v}}{d(v)^a(pn)^{e_T-a}} - 1\right|\leq C_T\eps  \quad \text{whp}.
  \end{equation}
Indeed, this readily implies the desired estimate~\eqref{eq:Yt_conc} with $C = C_2^a C_T + 1$, say, by applying the triangle inequality and the degree bound~$d(v) \le C_2pn$ from \eqref{eq:probminmax} as well as \eqref{eq:Zt_negl}.

It thus remains to prove inequality~\eqref{eq:Zt_conc}, for which we will use induction on the height~$h$ of the tree~$T$, 
the base case being~$h \in \{0, 1, 2\}$. The case~${h=0}$ is trivial and for~${h=1}$ the formula $Z_{T, v} = d(v)^{a} = d(v)^{a} (pn)^0$ holds deterministically. 
Deferring the proof of the remaining base case~$h=2$ (which is most of the work, and will be based on Lemma~\ref{lem:crux}), 
we now turn to the induction step for height~$h \ge 3$, where we assume that \eqref{eq:Zt_conc} holds for all trees $T$ of height at most~$h-1$. Now, for a tree $T$ of height $h\geq 3$, let $u_1,\dots, u_a$ be the children of the root of $T$, and, for $i\in[a]$, let~$T_i$ denote the rooted tree consisting of the root~$u_i$ and all its descendants in~$T$. 
For notational convenience,~set 
  \begin{equation*}
  a_i := \deg_{T_i}(u_i)
  \qquad \text{ and } \qquad 
     b := \sum_{i\in[a]}a_i \,.
  \end{equation*}
Denoting by~$N(v)$ the set of neighbors of~$v$ in $\Gnp$,
using induction we infer that, whp, 
	\begin{align}
	  \notag Z_{T, v} & =\sum_{v_1,\dots, v_a\in N(v)}\prod_{i\in [a]}Z_{T_i, v_i}\\
	  \notag &= \sum_{v_1,\dots, v_a\in N(v)}\prod_{i\in [a]}d(v_i)^{a_i}(pn)^{e_{T_i}-a_i}(1 \pm C_{T_i}\eps)\\
	\label{eq:induction}
	& =\biggpar{\sum_{v_1,\dots, v_a\in N(v)}\prod_{i\in [a]}d(v_i)^{a_i}} \cdot (pn)^{e_T-a-b}\prod_{i \in [a]}(1 \pm C_{T_i}\eps)\,.
	\end{align}
for all~$v \in [n]$.
Denote by $T'$ the subtree of $T$ induced by vertices of depth at most two, so that $e_{T'} = a + b$. Observe that the first factor in \eqref{eq:induction} is precisely~$Z_{T', v}$ and using the induction hypothesis again, this time for height $2$, 
it follows that,~whp, 
	\begin{equation*}
\begin{split}
  Z_{T, v} & =(1 \pm C_{T'}\eps)d(v)^a (pn)^b \cdot (pn)^{e_T-a-b}\prod_{i \in [a]}(1 \pm  C_{T_i}\eps) 
\end{split}
\end{equation*}
for all~$v \in [n]$, which implies that~$Z_{T, v} = (1 \pm C_T\eps)d(v)^a(pn)^{e_T-a}$
with a suitable constant~${C_T>0}$ for large enough~$n$ (since~$\eps \to 0$), establishing the induction step.

We turn to the base case of height~$h = 2$ for inequality~\eqref{eq:Zt_conc}, which is the core of the matter. Here we find it convenient to focus on injective homomorphisms counts~$X_{T,v}$ (and then use~\eqref{eq:Zt_negl} from Lemma~\ref{lem:noninj} to transfer to~$Z_{T,v}$).
Fix a tree~$T$ of height~two.  
Recalling the two constants $C_1, C_2$ from \eqref{eq:probminmax}, let the constant $C=C(T, C_0, C_1, C_2)>0$ be as in Lemma~\ref{lem:crux}. Define the `good'~event
\begin{equation*}
  \cG_v := \bigl\{ |X_{T, v} - \mu_{d(v)}| < C \eps (pn)^{e_T} \bigr\}\,,
\end{equation*}
Recall that by definition~\eqref{def:mus} we have~$\mu_s=(s)_a\cdot p^{e_T-a}(n - a - 1)_{e_T-a}$. 
For all ${s \ge C_1pn}$, we uniformly have $(s)_a = s^a (1 - O(1/pn))$, which together with ${(n-a - 1)_{e_T - a}} = {n^{e_T - a} \left( 1 - O(1/n) \right)}$ and ${\eps \gg 1/pn}$ 
readily implies that
\begin{equation}
	\label{eq:ExpHom}
	\max_{s \in [C_1 pn, C_2 pn]} \left| \frac{\mu_s}{s^a(pn)^{e_T - a}} - 1 \right|  \; \ll \; \eps\,.
\end{equation}
Furthermore, the union bound,~\eqref{eq:probminmax} and Lemma~\ref{lem:crux} imply~that
\begin{align*}
&\Pr\biggpar{\bigcup_{v \in [n]}\bigl\{\neg\cG_v \text{ or } d(v)/(pn) \notin [C_1, C_2]\bigr\}} \\
& \quad \le \prob{d(v)/(pn) \notin [C_1, C_2]} + \sum_{v \in [n]} \prob{\neg\cG_v, \; d(v)/(pn) \in [C_1, C_2]} \ll 1\,. 
\end{align*}
Hence whp the events~$\cG_v$ and~${d(v)/(pn) \in [C_1, C_2]}$ hold simultaneously for all~${v \in [n]}$. 
Together with inequalities~\eqref{eq:Zt_negl}  and~\eqref{eq:ExpHom},
using~$C_1 pn \le \min\{pn, d(v)\}$ and~${(pn)^{e_{T}-1}} \ll {\eps (pn)^{e_{T}}}$ it follows that, whp, for all vertices~$v \in [n]$ we have
\begin{equation*}
  Z_{T, v} = \mu_{d(v)} \pm C \eps (pn)^{e_T} + O( (pn)^{e_T - 1} )
= d(v)^a(pn)^{e_T - a}(1\pm 2C_2^a C \eps)\,,
\end{equation*}
completing for~$C_T := 2C_2^a C$ the proof of inequality~\eqref{eq:Zt_conc} in the base case $h=2$. 
\end{proof}

\section{Trees: reduction to random trees in the sparse case}
\label{sec:reduction}
In this section we establish two auxiliary lemmas that will conceptually simplify the proofs of Theorems~\ref{thm:paths} and~\ref{thm:symm_trees} for rooted paths and spherically symmetric trees in Sections~\ref{sec:paths} and~\ref{sec:symm_trees}: 
in the `sparse' case ${1 \ll pn \ll \log n}$, Lemmas~\ref{lem:sparse_lower_GW} and~\ref{lem:sparse_upper_GW} below allow us to focus on the tails of extension counts in random trees, 
which are much easier to handle than extension counts in random graphs where there are more dependencies. 
We note that the proofs of these `transfer results' need to take into account that maximum tree extension counts are an extreme value problem (maximizing over all possible root~vertices).

Recall that a Galton--Watson tree~$\cT_{n,p}$ 
with offspring distribution ${\xi \sim \Bin(n,p)}$ is defined recursively, starting with the root vertex and giving each vertex a random set of children, the number of which is an independent copy of~$\xi$. 

For a `large' rooted tree~$G$ and a `small' rooted tree $T$, let~$f_T(G)$ be the number of $T$-extensions of the root in $G$ (i.e., injective homomorphisms from $T$ to $G$ that map the root of $T$ to the root of $G$).
In the next lemma we obtain a lower bound on the maximum number of $T$-extensions in~$\Gnp$ via a lower bound on the tail of the extension count in $\cT_{n,p}$. (We have not attempted to optimize assumption~\eqref{eq:tree_sparse_lower_cond}, since it suffices for our purposes.)
\begin{lemma}[Reduction to random tree: lower bound]\label{lem:sparse_lower_GW}
  Let~$T$ be a rooted tree of height $h \geq 1$. Let $p=p(n)\in(0,1)$ satisfy $1 \ll pn \le \log n$. There exists a sequence $n^* = n^*(n)  \sim n$  such that if a sequence $k = k(n)$ satisfies
  \begin{equation}
\label{eq:tree_sparse_lower_cond}
\prob{f_T(\cT_{n^*,p}) \ge k} \gg (\log n)^{h+1}/n\,,
  \end{equation}
  then with high probability some vertex has at least $k$ many $T$-extensions in $\Gnp$.
\end{lemma}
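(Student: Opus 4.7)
\textbf{Proof plan for Lemma~\ref{lem:sparse_lower_GW}.}
The plan is to find many vertices $v_1,\dots,v_m$ of $\Gnp$ whose $h$-neighborhoods are disjoint, and to couple the BFS tree rooted at each $v_i$ with an independent copy of $\cT_{n^*,p}$ in such a way that the number of $T$-extensions in $\Gnp$ at $v_i$ is at least $f_T$ of the GW tree. Then by independence and the tail hypothesis, whp at least one such $v_i$ has $f_T \ge k$, which forces $X_{T,v_i}\ge k$ in $\Gnp$.

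Since $pn \le \log n$, the maximum degree of $\Gnp$ is $O(\log n)$ whp (by Chernoff, or Proposition~\ref{prop:max_deg_asymp}), so every BFS tree of depth $h$ has size at most $L^h = O((\log n)^h)$ whp for a suitable $L = C\log n$. Set $m := \lfloor n/(\log n)^{h+1}\rfloor$. I would generate the vertices sequentially: having explored disjoint neighborhoods $B_1,\dots,B_{i-1}$ inside $W_{i-1}:=\bigcup_{j<i} V(B_j)$, pick any $v_i \in [n]\setminus W_{i-1}$ and do BFS to depth $h$ in the graph $\Gnp[[n]\setminus W_{i-1}]$, exposing only previously unseen edges. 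Writing $B_i$ for the resulting BFS tree, we abort if $|B_i|>2L^h$; this happens with total probability $o(1)$ by the degree bound. Otherwise $|W_m|\le 2mL^h = O(n/\log n)=o(n)$.

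Now comes the coupling, which is the technical heart. Conditional on the history through step $i-1$, when BFS at $v_i$ probes the children of a vertex $u$, each candidate in $[n]\setminus W_{i-1}\setminus B_i^{\mathrm{curr}}$ is a child independently with probability $p$; the pool has size at least $n - |W_{i-1}| - 2L^h \ge n - 2mL^h =: n^*$, and $n^*\sim n$. Thinning each offspring set to a uniformly random $n^*$-subset of the candidate pool, I obtain a GW tree $\cT_i \sim \cT_{n^*,p}$ truncated at depth $h$ that sits as a rooted subtree of $B_i$. The $\cT_i$ are mutually independent by construction, since each uses a disjoint batch of fresh Bernoulli($p$) coins. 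Because $B_i$ is a tree inside $\Gnp$ and $T$ has height $h$, every $T$-extension of the root in $\cT_i$ yields an injective $T$-extension of $v_i$ in $\Gnp$, giving
\[
X_{T,v_i}(\Gnp) \;\ge\; f_T(\cT_i).
\]

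With $\pi := \prob{f_T(\cT_{n^*,p})\ge k}$ we then get, by independence of the $\cT_i$,
\[
\prob{\max_{i\le m} X_{T,v_i}(\Gnp) < k} \;\le\; (1-\pi)^m + o(1) \;\le\; \exp(-m\pi) + o(1).
\]
The hypothesis $\pi \gg (\log n)^{h+1}/n$ and $m \asymp n/(\log n)^{h+1}$ give $m\pi \to \infty$, so this probability is $o(1)$, completing the proof. The main obstacle is the coupling step: one must be careful that the sequential BFS exposes edges disjointly across the $v_i$'s, that the pool size bound $n^*$ is valid at every intermediate moment (not just on average), and that the thinning-to-$n^*$ step preserves the tree structure so that the inclusion $\cT_i\subseteq B_i$ really produces injective $T$-extensions in $\Gnp$.
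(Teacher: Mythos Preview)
Your proposal is correct and follows essentially the same approach as the paper: sequentially carve out disjoint depth-$h$ exploration trees, couple each with an independent copy of $\cT_{n^*,p}$ by restricting to an $n^*$-element subset of the candidate pool at each step, and conclude via independence and the tail hypothesis. The only place the paper is slightly more explicit is in defining the Galton--Watson trees unconditionally (completing them with external randomness whenever the degree bound fails) so that they are exactly i.i.d.\ and the product bound $(1-\pi)^{n'}$ holds without conditioning, whereas you absorb the abort event into the $+o(1)$; both routes work.
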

The purpose of the second lemma is to obtain an upper bound on the maximum number of $T$-extensions in~$\Gnp$ via an upper bound on the tail of the extension count in~$\cT_{n,p}$. 
The assumption on~$k$ will turn out to be negligible in our applications,
because in the range~$1 \ll pn \ll \log n$ the maximum degree is of order~$D \gg pn$ (see~\eqref{eq:LLN_maxdeg_sparse}) and the maximum count of extensions will thus be of higher order than the mean $\mu_T \sim (pn)^{e_T}$.
\begin{lemma}[Reduction to random tree: upper bound]\label{lem:sparse_upper_GW}
Fix~$c \in (0,1]$. 
Let~$T$ be a rooted tree of height $h \geq 1$. Let $p=p(n)\in(0,1)$ satisfy $pn \gg 1$. 
If a sequence $k = k(n)$ satisfies~$k \ge ((1+c) pn)^{e_T}$ and 
\begin{equation}
\label{eq:tree_sparse_upper_cond}
\prob{f_T(\cT_{n,p}) \ge k} \ll 1/n\,,
  \end{equation}
  then with high probability every vertex has at most~$k$ many $T$-extensions in $\Gnp$.
\end{lemma}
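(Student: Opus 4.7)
The strategy is to combine a union bound over the $n$ vertices with a coupling between $\Gnp$'s local structure and the Galton--Watson tree~$\cT_{n,p}$. Fix $v \in [n]$ and explore $\Gnp$ from $v$ by BFS up to depth~$h$. At each processed vertex~$u$, the number of its previously-unseen neighbors has distribution~$\Bin(n - s_u, p)$, where~$s_u$ counts the vertices seen so far; since~$\Bin(n - s_u, p)$ is stochastically dominated by~$\Bin(n, p)$, one can couple this exploration with a truncated realization of~$\cT_{n,p}$ so that the BFS tree rooted at~$v$ embeds as a subtree of~$\cT_{n,p}$. Consequently, the number of $T$-extensions of~$v$ that use only BFS-tree edges is at most~$f_T(\cT_{n,p})$.

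The main technical step is to bound the remaining extensions of~$v$, which use ``back edges'' in the $h$-neighborhood of~$v$ (edges of $\Gnp$ not lying in the BFS tree). The key point is that back edges are rare: letting~$B$ denote their number in the $h$-neighborhood of~$v$, we have $\E B = O((pn)^{2h+1}/n)$, and a second-moment calculation gives $\prob{B \ge 2} = O((pn)^{4h+2}/n^2) = o(1/n)$ uniformly in~$v$ in the intended sparse range. Combined with a Chernoff-style bound yielding a threshold~$\tilde D$ whose exceedance by the maximum degree in the $h$-neighborhood has probability~$o(1/n)$, this defines a ``good'' event on which at most one back edge is present and its contribution to~$X_v$ is~$O(\tilde D^{e_T-1})$ (by a combinatorial count of how a single back edge can fit into a copy of~$T$). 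The hypothesis $k \ge ((1+c)pn)^{e_T}$, placing~$k$ a constant factor above~$\mu_T \sim (pn)^{e_T}$, allows~$\tilde D$ to be chosen so that this back-edge contribution is~$o(k)$.

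Combining, uniformly in~$v$ one obtains
\[
  \prob{X_v \ge k} \le \prob{f_T(\cT_{n,p}) \ge (1-o(1))k}\,(1 + o(1)) + o(1/n).
\]
A union bound over~$v \in [n]$, together with the hypothesis~$\prob{f_T(\cT_{n,p}) \ge k} \ll 1/n$ and a mild tail-regularity argument to transfer from~$k$ to~$(1-o(1))k$, then yields~$\prob{\exists v: X_v \ge k} = o(1)$, as required.

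The principal obstacle is the parameter-balancing in the back-edge analysis: the threshold~$\tilde D$ must be small enough for the contribution~$O(\tilde D^{e_T-1})$ to be~$o(k)$, yet large enough that the maximum degree in the $h$-neighborhood exceeds~$\tilde D$ only with probability~$o(1/n)$. This balance is most delicate when~$pn$ is close to~$1$, and is precisely where the constant-factor room $k/\mu_T \ge (1+c)^{e_T}$ afforded by the hypothesis on~$k$ plays its crucial role.
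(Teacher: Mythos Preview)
Your coupling of the BFS tree at $v$ with $\cT_{n,p}$ is correct and matches the paper's first step. The gap is in the back-edge analysis, precisely at the point you flag as the ``principal obstacle''.

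You want a threshold $\tilde D$ such that (a) the maximum degree in the $h$-neighbourhood of $v$ exceeds $\tilde D$ with probability $o(1/n)$, and (b) the contribution $O(\tilde D^{e_T-1})$ of a single back edge is $o(k)$. Condition (a) forces $\tilde D \gtrsim D = D(n,p)$: the $h$-neighbourhood typically contains polynomially-in-$D$ many vertices, and for any $x = o(D)$ one has $\prob{\Bin(n,p) \ge x} = n^{-o(1)}$, so no smaller threshold achieves $o(1/n)$. But then (b) can fail. For instance, take $T = P_4$ (so $e_T = 4$) and $pn \to \infty$ with $pn \ll \log^{(4)} n$: here the relevant $k$ in the application is $(1+\eps)k_4 \asymp \log n / \log^{(5)} n$, while $D^{e_T-2} = D^2 \asymp (\log n)^2/(\log\log n)^2 \gg k_4$, so your back-edge term swamps the target. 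The lower bound $k \ge ((1+c)pn)^{e_T}$ is far too weak to close this. Separately, the ``tail-regularity'' step passing from $\prob{f_T(\cT_{n,p}) \ge k} \ll 1/n$ to the same at $(1-o(1))k$ is unjustified and not true in general.

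The paper sidesteps both problems by \emph{not} bounding the back-edge contribution at all. It defines a single global bad event $\cB$ = ``$\Gnp$ contains a cycle of length at most $2h$ within graph distance $2h$ of some vertex of degree at least $(1+c)pn$'', and shows $\Pr(\cB)=o(1)$ by a first-moment computation (union over cycles and connecting paths, combined with a Chernoff bound on the endpoint's degree), valid for all $pn \to \infty$. On $\neg\cB$ there is a clean dichotomy for each $v$: if the $h$-neighbourhood of $v$ is a tree, then $X_v = f_T(\cT_v) \le f_T(\cT_{n,p})$; if it contains a surplus edge, then $v$ lies near a short cycle, so by $\neg\cB$ \emph{every} vertex within distance $h$ of $v$ has degree at most $(1+c)pn$, and a greedy count gives $X_v \le ((1+c)pn)^{e_T} \le k$ outright. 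Hence $X_v \le \max\{f_T(\cT_v),\,k\}$ deterministically on $\neg\cB$, and the union bound over $v$ uses the hypothesis at $k$ itself, with no tail-regularity needed.
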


\subsection{Proofs of Lemmas~\ref{lem:sparse_lower_GW} and~\ref{lem:sparse_upper_GW}}
The proofs of Lemmas~\ref{lem:sparse_lower_GW} and~\ref{lem:sparse_upper_GW} are both based on coupling arguments, using an exploration process to relate the neighborhood structure in the random graph~$\Gnp$ to the Galton--Watson tree~$\cT_{n,p}$. 
Given a vertex~${v \in [n]}$, we explore all vertices at distance at most $h$ from $v$ in the breadth-first manner, generating a subtree of~$\Gnp$ rooted at~$v$. 

Turning to the precise definition, at each step~$i \ge 0$, the set~$L_i$ will be a subset of the leaves of  the partially discovered tree, and~set $S_i$ (which we call the \emph{candidate set}) will consists vertices not in the tree. 
We start~with
\[
L_0 := \{v\} \quad \text{ and } \quad S_0 := [n] \setminus \{ v \}.
\]
At each step~$i \ge 0$, we pick a vertex~$v_i$ in~$L_i$ of minimal distance from~$v$ and expose the edges between $v_i$ and $S_i$ (their number has distribution $\Bin(|S_i|,p)$). Then we remove~$v_i$ from~$L_i$,  and transfer the new neighbors of~$v_i$ from~$S_i$ to~$L_i$, i.e., set 
\[
L_{i+1} := (L_i \setminus \{v_i\}) \cup N(v_i, S_i) \quad \text{ and } \quad 
  S_{i+1} := S_{i} \setminus N(v_i, S_i) \,,
\]
where~$N(u, S)$ denotes the neighbors in~$\Gnp$ of a vertex~$u$ in a set~$S$.
Repeating this step until $L_i$ has no vertices of depth smaller than $h$, we obtain a tree of height at most~$h$ which we call the \emph{$h$-neighborhood subtree of~$v$ in~$\Gnp$}. Note that the  edge set of this tree equals~${\cup_i (\{v_i\} \times N(v_i, S_i))}$ by construction. 

\begin{proof}[Proof of Lemma~\ref{lem:sparse_lower_GW}]
  Let $\Delta_p$ denote the maximum degree of $\Gnp$. We claim that there is ~$D = \Theta(\log n)$ such that whp~$\Delta_p \le D$. Since $pn \le \log n$, by a well-known coupling the random graph $\Gnp$ can be treated as a subgraph of $\G_{n,(\log n)/n}$ and thus we can assume $\Delta_p \le \Delta_{(\log n)/n}$. By Proposition~\ref{prop:max_deg_asymp}, whp $\Delta_{(\log n) / n} \le (1.001 + \phi^{-1}(1)) \log n =: D$. Clearly $D = \Theta(\log n)$. Let us further omit the subscript and write $\Delta = \Delta_p$.
With foresight,~set 
\[
  n^* := \floor{n - n/D} \qquad \text{and} \qquad n' := \floor{n/(2D^{h+1})}.
\]
We define a sequence of vertex-disjoint rooted subtrees $\cT_1, \cT_2, \ldots$  (of random finite length) in $\Gnp$ as follows. 
Pick an arbitrary vertex~$u_1 \in [n]$, and let~$\cT_1$ be the $h$-neighborhood tree of~$u_1$ in~$\Gnp$, as defined above. 
Note that, conditioned on $\cT_1$, the `leftover' subgraph of $\Gnp$ induced by the vertex set $[n] \setminus V(\cT_1)$, 
has the same distribution as $\bG_{n-|V(\cT_1)|,p}$ 
(to see this, note that the exploration process only looked at edges with at least one endpoint in~$V(\cT_1)$, 
so all remaining edges are still present independently with probability~$p$).
Hence in this leftover graph we can pick an arbitrary vertex~$u_2$, and let~$\cT_2$ be its $h$-neighborhood tree in $\bG_{n-|V(\cT_1)|,p}$, as defined above. 
We repeat the procedure (remove the vertices of the last tree; pick an arbitrary vertex; explore its $h$-neighborhood subtree) until we run out of vertices, obtaining a sequence of disjoint rooted trees $\cT_1, \cT_2, \dots$, as~desired.

Let $\cT_{n^*, p, h}$ be the subtree of $\cT_{n^*,p}$ consisting of vertices of depth at most $h$. We now claim that we can couple this sequence $\cT_1, \cT_2, \dots$ of trees with independent copies~$\cU_1, \dots, \cU_{n'}$ of the tree~$\cT_{n^*, p, h}$, with the property that
  \[ 
\text{$\Delta \le D$ implies that for every~$j \in [n']$, the tree~$\cT_j$ exists and satisfies $\cU_j \subseteq \cT_j$.} 
  \]
To see that such a coupling exists, we need to check that as long as (i)~we have not completed constructing all trees $\cT_1, \dots, \cT_{n'}$ and (ii)~we have not revealed more than $D$ edges incident to the same vertex, we still have enough space to sample another set of $\Bin(n^*,p)$-distributed children. In other words, we need to check that for every $j \in [n']$ and every step $i$ in the construction of $\cT_j$ the candidate set $S_i$ contains at least $n^*$ vertices.  Note that whenever $\Delta \le D$, each tree $\cT_j$ spans at most~$\sum_{k = 0}^{h} D^k \le 2D^h$ vertices, and thus the trees $\cT_1, \dots, \cT_{n'}$ exist and together span at most~$2n' D^h \le n/D$ vertices. This means that as long as we have not discovered a vertex with more than~$D$ neighbors, we still have (for every $j$ and $i$) that~$|S_i| \ge n - n/D \ge n^*$. To define the coupling, consider a step $i$ of the construction of $\cT_j$. Recall that at this step we pick $v_i \in S_{i-1}$ and find its neighbors in $S_i$. If $v_i$ has not been included in $\cU_j$ so far, none of its children in $\cT_j$ will be included in $\cU_j$; otherwise (if $v_i$ has been included in $\cU_j$), we choose $n^*$ vertices in $S_i$ (say first according to some predetermined ordering) before exposing the edges between~$v_i$ and~$S_i$ and only include in $\cU_j$ those new vertices (and, of course, the edges leading to them) in $N(v_i,S_i)$ that are among the $n^*$ chosen ones. 
If we do discover a vertex of degree more than~$D$ in~$\Gnp$, then we complete the construction of the trees~$\cU_1, \dots, \cU_{n'}$ without embedding the remaining~$\cU_j$ into the trees $\cT_j$, just to make sure that the $\cU_1, \dots, \cU_{n'}$ are well~defined.  

Writing $X_v$ for the number of $T$-extensions of the vertex~$v$ in~$\Gnp$, by the properties of the coupling constructed above it follows that 
\begin{equation}\label{eq:tree-coupling}
\begin{split}
\prob{\max_{v \in [n]} X_v < k} & \le \prob{\max_{i \in [n']} f_T(\cT_i) < k, \; \Delta \le D} + \Pr(\Delta > D)\\
 &\le \prob{\max_{i \in [n']} f_T(\cU_i) < k} + o(1)\,,
\end{split}
\end{equation}
where for the last inequality we recalled that whp~$\Delta \le D$.
Let~$\pi_n := \prob{f_T(\cT_{n^*,p}) \ge k}$. 
Since $D =\Theta(\log n)$, from the definition of~$n'$ and assumption~\eqref{eq:tree_sparse_lower_cond} it follows that~$\pi_n n'\gg 1$.
Since the trees $\cU_1, \dots, \cU_{n'}$ are independent copies of~$\cT_{n^*, p, h}$, and $f_T(\cT_{n^*,p,h}) = f_T(\cT_{n^*, p})$, it follows~that
\[
\prob{\max_{i \in [n']} f_T(\cU_i) < k} = \left( 1 - \pi_n \right)^{n'} \le e^{-\pi_n n'} \to 0\,,
\]
which together with~\eqref{eq:tree-coupling} and the definition of~$X_v$ completes the proof of Lemma~\ref{lem:sparse_lower_GW}.
\end{proof}

\begin{proof}[Proof of Lemma~\ref{lem:sparse_upper_GW}]
Given a vertex~$v \in [n]$, let~$\cT_v$ denote the \mbox{$h$-neighborhood} tree of~$v$ in~$\Gnp$, as defined before the proof of Lemma~\ref{lem:sparse_lower_GW}. 
By construction of~$\cT_v$ (and the trivial bound~$|S_i| \le n$) there is a coupling of~$\cT_{n,p}$ and~$\cT_v$ so that $\cT_v$ is a subtree of~$\cT_{n,p}$ (with the same root), which in particular ensures that~$f_T(\cT_v) \le f_T(\cT_{n,p})$.
Assumption \eqref{eq:tree_sparse_upper_cond} and a union bound over the $n$ vertices~$v \in [n]$ implies that
\begin{equation}\label{eq:Tv:lower}
  \max_{v \in [n]} f_T(\cT_v) \le k \quad \text{whp}.
\end{equation}

Recall that $X_v$ is the number of $T$-extensions of $v$ in $\Gnp$. Intuitively, we should typically have $X_v \approx f_T(\cT_v)$ since the neighborhood around~$v$ looks like~$\cT_v$ plus a few \emph{surplus} edges, i.e., edges in~$\Gnp \setminus \cT_v$ spanned by~$V(\cT_v)$. 
To exploit this intuition, let~$\cB$ denote the `bad' event that~$\Gnp$ contains a cycle~$C$ of length at most~$2h+1$ and a vertex $u$ of degree at least~$(1+c)pn$ which is within graph distance at most~$2h$ from $C$ (i.e., there is a path with at most $2h$ edges from $u$ to some vertex on the cycle).
We claim~that  
\begin{equation}\label{eq:Tv:lower:2}
\neg\cB \qquad \text{ implies } \qquad 
  X_v \le \max\bigcpar{f_T(\cT_v), \: k} \quad \text{for all~$v \in [n]$}\,.
\end{equation}
To prove~\eqref{eq:Tv:lower:2}, we fix~$v \in [n]$. 
When~$\cT_v$ has no surplus edges, then~$X_v = f_T(\cT_v)$, establishing~\eqref{eq:Tv:lower:2}.
We henceforth consider the case when~$\cT_v$ has at least one surplus edge, which means that vertex~$v$ is within distance~$h$ to a cycle of length at most~$2h + 1$. 
Note that if another vertex $u$ is within distance $h$ of $v$, then $u$ is within distance $2h$ of the cycle.
Therefore event~$\neg\cB$ implies that all vertices within distance~$h$ of~$v$ have degree less than~$(1+c)pn$. 
Since~$T$ has height~$h$,
it easily follows that 
~$X_v \le ((1+c)pn)^{e_T} \le k$, establishing~\eqref{eq:Tv:lower:2}. 

To complete the proof of Lemma~\ref{lem:sparse_upper_GW}, 
in view of~\eqref{eq:Tv:lower} and~\eqref{eq:Tv:lower:2} it suffices to show that
\begin{equation}\label{eq:Tv:Bad}
\Pr(\cB)=o(1)\,.
\end{equation}
Note that if the event~$\cB$ occurs, then~$\Gnp$ contains a cycle~$C$ of length~$3 \le \ell \le 2h + 1$ and a path~$P$ of length~$0 \le i \le 2h$ connecting~$C$ to a vertex~$v$ of degree at least~$(1+c)pn$, where~$C$ and~$P$ share exactly one vertex (so~$v$ is contained in~$C$ when~$i=0$).
Note that the vertex~$v$ is uniquely determined by the choice of~$C$ and~$P$, and that~$v$ has at most~$\ell+i \le 4h + 1$ many neighbors in the subgraph~$C \cup P$. 
Using a standard union bound argument that takes all such cycles~$C$ and connecting paths~$P$ into account, it thus follows~that 
\begin{equation*}
  \Pr(\cB) \le \sum_{3 \le \ell \le 2h + 1} \sum_{0 \le i \le 2h} n^{\ell} \cdot \ell n^{i} \cdot p^{\ell + i} \cdot \underbrace{\Pr\bigpar{\Bin\bigpar{n-(\ell+i),p}\ge (1+c)pn-(4h + 1)}}_{=: \Pi_{\ell,i}} \,.
\end{equation*}
Recall that~$c \in (0,1]$ is fixed. 
Using stochastic domination together with~$pn \to \infty$, 
by the Chernoff bound~\eqref{eq:Chern_upper} it follows (for all large enough~$n$) that 
\begin{equation*}
  \Pi_{\ell,i} \le \Pr\bigpar{\Bin(n,p)\ge (1+c/2)pn} \le \e^{-\phi(c/2)pn}\,.
\end{equation*}  
Since~$c,h>0$ are constants, using again~$pn \to \infty$ it readily follows~that 
\begin{equation*}
  \Pr(\cB) \: \le \: (2h + 1)^3 \cdot (pn)^{4h + 1} \cdot \e^{-\phi(c/2)pn} = o(1)\,,
\end{equation*}
completing the proof of~\eqref{eq:Tv:Bad} and thus of Lemma~\ref{lem:sparse_upper_GW}, as discussed. 
\end{proof}

\section{Paths: the sparse case}
\label{sec:paths}
In this section we deal with extension counts of rooted paths in the `sparse' case ${1 \ll pn \ll \log n}$.  In particular, we state and prove Theorem~\ref{thm:paths} and Proposition~\ref{prop:km_asymp} below, which generalize Theorem~\ref{thm:sparseP2} to any $m$-edge path~$P_m$ rooted at an endpoint. 
As we shall see, here the typical behavior 
is determined by the parameter $k_m = k_m(n,\lambda)$, which for positive integers $n$ and~$\lambda \in (0, \log n)$ is recursively defined~as
    \begin{equation}\label{eq:km:def}
      k_m(n, \lambda) := 
      \begin{cases}
	\frac{\log n}{\log \frac{\log n}{\lambda}} \quad & \text{if } m=1\,,\\
	\frac{\log n}{\log \left( 1 + \frac{\log n }{\lambda k_{m-1}(n,\lambda)} \right)}\quad & \text{if } m \ge 2\,.
      \end{cases}
    \end{equation}
We note that the `correct' definition of $k_m$ is, in some sense, already a significant part of the proof of Theorem~\ref{thm:paths}, 
since it will enable us to inductively determine certain tail bounds using large deviation type optimization arguments. 

  \begin{theorem}[Maximum for paths, sparse case]
    \label{thm:paths}
Fix~${T = P_m}$, with~$m \ge 1$. 
Set~$\lambda := pn$. 
If~$1 \ll \lambda \ll \log n$, then  
\begin{equation*}
\frac{M_n}{k_m(n, \lambda)} \pto 1 \,.
\end{equation*}
  \end{theorem}
We will also show that the~$k_m$ satisfies the following asymptotics.
For any integer~$i \ge 1$, we denote by~ $\log^{(i)}$ the natural logarithm iterated $i$ times (in particular~$\log^{(1)}=\log$). 
\begin{proposition}
  \label{prop:km_asymp}
  For $m \ge 1$, the function~$k_m = k_m(n, \lambda)$ defined in \eqref{eq:km:def} is increasing in~$\lambda$, and~satisfies
\begin{equation}\label{eq:km:asymp}
  k_m(n,\lambda) \sim
      \begin{cases}
	\frac{\log n}{\log \frac{\log^{(m)} n }{\lambda}} \quad &\text{if } 1 \le \lambda \ll \log^{(m)} n\,,\\[4mm]
	\frac{\lambda^{m-i}\log n}{\log \frac{\log^{(i)} n }{\lambda}} \quad &\text{if } \log^{(i+1)} n \ll \lambda \ll \log^{(i)} n \text{ with } 1 \le i \le m-1\,,\\[4mm]
	\frac{\lambda^{m-i}\log n}{\log (1 + C)}  \quad & \text{if } \lambda \sim (\log^{(i)} n)/C \text{ for constant~$C$, with } 2 \le i \le m\,.
      \end{cases}
    \end{equation}
\end{proposition}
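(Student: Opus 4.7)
The plan is to establish both the monotonicity of $k_m(n,\lambda)$ in~$\lambda$ and the asymptotic formulas in~\eqref{eq:km:asymp} simultaneously by induction on~$m$, with the recursion~\eqref{eq:km:def} doing essentially all the work once the correct regime for $k_{m-1}$ is identified. Throughout, for $m \ge 2$ I will write
\[
k_m \;=\; \frac{\log n}{\log(1 + \rho_m)}, \qquad \rho_m \;:=\; \frac{\log n}{\lambda\, k_{m-1}},
\]
so that the task reduces to determining $\rho_m$ in each regime.

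Monotonicity is straightforward. The base case $m=1$ is immediate from $k_1 = \log n/(\log\log n - \log \lambda)$ since the denominator is strictly decreasing in~$\lambda$ on $(0, \log n)$. For the inductive step, if $k_{m-1}$ is increasing in~$\lambda$, then so is $\lambda k_{m-1}$; hence $\rho_m$ is decreasing, and therefore $k_m$ is increasing.

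For the asymptotics, the base case $m=1$ is just the first line of~\eqref{eq:km:def} (cases (b) and (c) are vacuous for $m=1$ since the index ranges on~$i$ are empty). For the inductive step, I fix one of the three regimes for $k_m$ in~\eqref{eq:km:asymp} and determine in which regime the inductive hypothesis places $k_{m-1}$. The key bookkeeping is that \emph{the regime index shifts as~$m$ increases}: passing from $k_{m-1}$ to $k_m$, the case~(a) range $1 \le \lambda \ll \log^{(m-1)} n$ of $k_{m-1}$ splits into three sub-ranges for $k_m$, namely the case~(a) range $\lambda \ll \log^{(m)} n$, the case~(c) transition $\lambda \sim \log^{(m)} n / C$, and the case~(b) range $\log^{(m)} n \ll \lambda \ll \log^{(m-1)} n$ (with $i = m-1$); while existing case~(b) and case~(c) regimes of $k_{m-1}$ with index~$i$ (in the ranges $1 \le i \le m-2$ and $2 \le i \le m-1$ respectively) map to the corresponding regimes of $k_m$ with the same~$i$.

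Each of the resulting six sub-cases reduces to a one-line asymptotic computation: substitute the inductive formula for $k_{m-1}$ into $\rho_m$, and use $\log(1 + \rho_m) \sim \rho_m$ when $\rho_m \to 0$, or $\log(1+\rho_m) \sim \log \rho_m$ when $\rho_m \to \infty$. The main step to execute carefully, and where I expect the chief obstacle to lie, is estimating $\log(\log^{(m-1)} n / \lambda) = \log^{(m)} n - \log \lambda$ in the transitional window where the two terms can be comparable. For $\lambda \ll \log^{(m)} n$ (and also for $\lambda \sim \log^{(m)} n / C$) the first term dominates, giving $\log(\log^{(m-1)} n / \lambda) \sim \log^{(m)} n$ and hence $\rho_m \to \infty$ (respectively $\rho_m \to C$), which yields the case~(a) and case~(c) formulas for $k_m$ directly; while for $\log^{(m)} n \ll \lambda \ll \log^{(m-1)} n$ one still has $\log(\log^{(m-1)} n / \lambda) \le \log^{(m)} n$, so $\rho_m = O(\log^{(m)} n / \lambda) \to 0$, yielding the case~(b) formula with $i = m-1$. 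The remaining sub-cases, where the inductive hypothesis places $k_{m-1}$ in case~(b) or case~(c), are analogous but simpler: there $\rho_m \to 0$ with a power-of-$\lambda$ safety margin, and the claimed formula for $k_m$ follows directly from $\log(1+\rho_m) \sim \rho_m$.
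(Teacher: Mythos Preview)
Your proposal is correct and takes essentially the same approach as the paper: induction on~$m$, with monotonicity handled first and the asymptotics following from the recursion~\eqref{eq:km:def} after identifying the relevant regime for~$k_{m-1}$. The only organizational difference is that the paper compresses your six sub-cases into two: it handles $1 \le \lambda = O(\log^{(m)} n)$ directly (covering your case~(a) and the new case~(c) with $i=m$), and for $\lambda \gg \log^{(m)} n$ it uses the already-established monotonicity to show $\lambda k_{m-1}(n,\lambda) \gg \log n$ in one stroke (rather than splitting by the regime of~$k_{m-1}$), whence $k_m \sim \lambda k_{m-1}$ and the remaining cases follow immediately from the inductive hypothesis.
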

Applying the inequality~$\log (1 + x) \le x$ to the denominator in~\eqref{eq:km:def}, it follows that 
    \begin{equation}
      \label{eq:km_lambda_kmminus}
      k_m \ge \lambda k_{m-1} \qquad \text{for } m \ge 2\,,
    \end{equation}
    which has a natural interpretation: the right-hand side of~\eqref{eq:km_lambda_kmminus} is the expected number of $P_m$-extensions of a vertex that has~$k_{m-1}$ many $P_{m-1}$-extensions; hence if some vertex has about $k_{m-1}$ many $P_{m-1}$-extensions, the maximum number of $P_m$-extensions is likely to be at least $\lambda k_{m-1} (1 + o(1) )$. The asymptotics~\eqref{eq:km:asymp} imply that $k_m \sim \lambda k_{m-1}$ if and only if $\lambda \gg \log^{(m)} n$, and thus the message of Theorem~\ref{thm:paths} is that in this regime the optimal strategy to obtain many $P_m$-extensions is `inherited' from $P_{m-1}$, while in the complementing case $\lambda =O(\log^{(m)} n)$ we have a better strategy, genuinely using the length~$m$ of the~path. 
\smallskip

We now give the short deduction of Theorem~\ref{thm:sparseP2} from \refT{thm:paths} and \refP{prop:km_asymp} and the proof of Corollary~\ref{cor:Taone}.
\smallskip
\begin{proof}[Proof of Theorem~\ref{thm:sparseP2}]
  Recall $D$ is defined in \eqref{eq:LLN_maxdeg_sparse} and that $\lambda = pn$. Combine Theorem~\ref{thm:paths} and Proposition~\ref{prop:km_asymp} for $m = 2$. The case $\log \log n \ll pn \ll \log n$ corresponds to $i = 1$ in \eqref{eq:km:asymp} while the case $1 \ll pn = O(\log \log n)$ is obtained by noting that the first case and the third case in \eqref{eq:km:asymp} can be cast in a common expression.
\end{proof}
\smallskip
\begin{proof}[Proof of Corollary~\ref{cor:Taone}]
The upper bound follows trivially from Theorem~\ref{thm:sparseP2}, using the simple inequality $X_{T_{a,1},v} \le (X_{T_{1,1},v})^a$. 

For the lower bound we consider the number $Y_v := {(X_{T_{1,1},v})^a - X_{T_{a,1}, v}}$ of $a$-tuples of $T_{1,1}$-extensions that are not vertex disjoint outside of $v$. 
Using Theorem~\ref{thm:sparseP2} it suffices to show that $Y_v = o(\alpha_n^a)$ whp. 
Note that, having chosen an $(a-1)$-tuple of $T_{1,1}$-extensions, the number of ways to choose one more~$T_{1,1}$-extension that overlaps with one of them, is clearly at most a constant multiple of the maximum degree $\Delta$. Therefore, by using \refT{thm:sparseP2} and \eqref{eq:LLN_maxdeg_sparse}, we infer that, whp,
  \begin{equation*}
    Y_v = O( X_{T_{1, 1}, v}^{a-1} \Delta) = O( \alpha_n^{a-1} D) 
  \end{equation*}
  for any~$v \in [n]$. 
  Simple calculus shows that $D \ll \alpha_n$, completing the proof of~\eqref{eq:Taone}.
\end{proof}

The proof of Theorem~\ref{thm:paths} relies on a reduction of the $P_m$-extension counts in~$\Gnp$ to the size of level~$m$ in the Galton--Watson tree~$\cT_{n,p}$ with binomial offspring distribution. 
The core Lemma~\ref{lem:paths} in Section~\ref{ss:randomtree_level} determines at which value the upper tail of the level~$m$ size in~$\cT_{n,p}$ changes from being much smaller than~$1/n$ to much larger than~$1/n$. 
In Section~\ref{sec:proof:paths} we then use the auxiliary results from Section~\ref{sec:reduction}
to deduce Theorem~\ref{thm:paths} from Lemma~\ref{lem:paths} in a straightforward~way (and also prove~Proposition~\ref{prop:km_asymp}). 

\subsection{Random tree: tails of level sizes}
\label{ss:randomtree_level}
Recall that $\cT_{n,p}$ denotes the Galton--Watson tree with binomial offspring distribution~$\Bin(n,p)$. Given an integer~$m \ge 1$, let $Z_m$ denote the number of vertices of $\cT_{n,p}$ of depth $m$ (also called the size of level~$m$).

\begin{lemma}
  \label{lem:paths}
Fix~$m \ge 1$. Let~$\lambda := pn$, and assume that $1 \le \lambda \ll \log n$. Then, for any constant~$\alpha > 0$,
  \begin{equation}\label{eq:n_alpha}
    \prob{Z_m \ge \alpha k_m(n,\lambda)} = n^{-\alpha + o(1)}\,.
  \end{equation}
\end{lemma}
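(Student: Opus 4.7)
The proof proceeds by induction on~$m$.

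\textbf{Base case ($m = 1$).} Here $Z_1 \sim \Bin(n,p)$ and $k_1(n,\lambda) = D$, so the claim is exactly the content of Lemma~\ref{lem:n_alpha} (both the upper and the lower bound).

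\textbf{Inductive step.} Assume the result for $m - 1$; prove it for $m$. The proof hinges on the recursive identity
\[
 k_m \log\Bigpar{1 + \tfrac{\log n}{\lambda k_{m-1}}} \; = \; \log n, \tag{$\star$}
\]
obtained by rearranging~\eqref{eq:km:def}.
The standard branching structure shows that, conditional on~$Z_{m-1} = y$, we have $Z_m \sim \Bin(yn, p)$ (since each of the $y$ vertices at depth~$m-1$ has an independent $\Bin(n,p)$ number of children). We will combine the inductive control on~$Z_{m-1}$ with tail estimates for this conditional binomial.

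\textbf{Upper bound.} Write $k = \alpha k_m$ and introduce the cost function
\[
  g(\beta) \; := \; \beta \log n \, + \, \lambda \beta k_{m-1} \phi\Bigpar{\tfrac{k}{\lambda \beta k_{m-1}} - 1},
  \qquad \beta > 0,
\]
(with the convention that the second term is $0$ when $k \le \lambda \beta k_{m-1}$). A direct calculus argument, writing $u := k/(\lambda \beta k_{m-1})$ and using $\phi'(x) = \log(1+x)$, shows that $g$ attains its minimum at
\[
  \beta^* \; := \; \frac{\alpha k_m}{\lambda k_{m-1} + \log n}, \qquad \text{equivalently} \qquad u^* = 1 + \tfrac{\log n}{\lambda k_{m-1}},
\]
with minimum value
\[
  g(\beta^*) \; = \; \alpha k_m \log u^* \; = \; \alpha \log n
\]
by~($\star$). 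Now discretize $\beta \in (0, 2\alpha]$ into $O(\log n)$ values $\beta_j$, and use the induction hypothesis ($\prob{Z_{m-1} \ge \beta_j k_{m-1}} \le n^{-\beta_j + o(1)}$) together with the Chernoff bound~\eqref{eq:Chern_upper} (giving $\prob{\Bin(\beta_j k_{m-1} n, p) \ge k} \le \exp(-\lambda \beta_j k_{m-1} \phi(\cdot))$) to see that the contribution from each $\beta_j$ is at most $\exp(-g(\beta_j) + o(\log n)) \le n^{-\alpha + o(1)}$. The range $\beta > 2\alpha$ is handled directly by induction. Summing over the $O(\log n)$ values gives $\prob{Z_m \ge \alpha k_m} \le n^{-\alpha + o(1)}$.

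\textbf{Lower bound.} Execute the optimal strategy identified above. Set $y^* := \floor{\beta^* k_{m-1}}$, noting that $\beta^* \le \alpha$ (by the bounds $\lambda k_{m-1} \le k_m \le \lambda k_{m-1} + \log n$, which also follow from~($\star$) via $x/(1+x) \le \log(1+x) \le x$). The induction hypothesis yields
\[
  \prob{Z_{m-1} \ge y^*} \; \ge \; n^{-\beta^* + o(1)}.
\]
Conditional on $\{Z_{m-1} \ge y^*\}$, the variable $Z_m$ stochastically dominates $\Bin(y^* n, p)$. Applying Proposition~\ref{prop:lower_poisson} to this binomial (with the role of $n$ played by $y^* n$; the hypotheses $k \ll \sqrt{y^* n}$ and $p y^* n \ll \sqrt{y^* n}$ are easily verified since $\lambda \ll \log n$ and $k_m = O(\lambda^{m-1} \log n)$) gives
\[
  \prob{\Bin(y^* n, p) \ge \alpha k_m} \; \ge \; \exp\Bigpar{-\lambda y^* \phi\bigpar{\tfrac{\alpha k_m}{\lambda y^*}-1} + O(\log k_m)}.
\]
Multiplying, the exponent is $-g(\beta^*) + o(\log n) = -\alpha \log n + o(\log n)$, so $\prob{Z_m \ge \alpha k_m} \ge n^{-\alpha + o(1)}$, completing the induction.

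\textbf{Main obstacle.} The delicate part is the optimization and its uniform control across regimes of~$\lambda$: the minimizer $\beta^*$ generally depends on $n$ (ranging between $\sim \alpha \lambda k_{m-1}/\log n$ when $\lambda k_{m-1} \ll \log n$ and $\sim \alpha$ when $\lambda k_{m-1} \gg \log n$), so the inductive hypothesis must be applied at a possibly non-constant $\beta^*$, requiring a mild uniformity extension. A related subtlety is the transition regime $u^* \approx 1$ (i.e.\ $\lambda k_{m-1} \gg \log n$), where the Chernoff/Poisson tail bounds become non-sharp and one must fall back on typical-value arguments; one verifies that the recursive identity~($\star$) still drives the optimization and yields the same value $g(\beta^*) = \alpha \log n$.
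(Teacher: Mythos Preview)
Your sketch is correct and follows the same approach as the paper: induction on $m$, with the same key optimization (your $\beta^*$ coincides with the paper's $\alpha\beta_{\rho_n}$ where $\rho_n = \log n/(\lambda k_{m-1})$, and your identity $g(\beta^*)=\alpha\log n$ is a reparametrization of the paper's inequality~\eqref{eq:sup}) and the same tools (Chernoff for the upper bound, Proposition~\ref{prop:lower_poisson} for the lower bound). The only notable difference is that the paper uses a \emph{constant}-mesh discretization of $\beta$ (incurring an $\eps$ slack that is sent to zero at the end) rather than your $O(\log n)$ mesh, which cleanly sidesteps the uniformity issue you correctly flag in your ``Main obstacle'' paragraph.
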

\begin{proof}
Note that by Proposition~\ref{prop:km_asymp}, 
using monotonicity and~\eqref{eq:km:asymp} it follows that 
    \begin{equation}
        \label{eq:km_infty}
    k_m(n, \lambda) \ge k_m(n, 1) \sim \frac{\log n}{\log^{(m+1)} n} \gg 1 \quad \text{ for any fixed integer~$m \ge 1$}\,.
    \end{equation}
A certain optimization problem will play a key role in our upcoming large deviation type arguments.  
To prepare for this, we now prove that, for any~$\rho > 0$ and~$\beta \in (0,1]$, 
\begin{equation}
  \label{eq:sup}
  \beta \left( 1 + \frac{1}{\rho} \phi \left( \frac{\rho}{\beta \log (1 + \rho)} -1 \right)  \right) \ge 1 \quad \text { with equality for }
 \beta = \beta_\rho := \frac{\rho}{\rho + \phi(\rho)}\,,
\end{equation}
where~$\phi(x)$ is as defined in~\eqref{eq:entropy}. 
To prove this, we treat $\rho$ as fixed and use change of variable~$x := \frac{\rho}{\beta \log (1 + \rho)} - 1$. Note that $x \ge \frac{\rho}{\log (1 + \rho)} - 1 > 0$. Since $\beta = \frac{\rho}{(1 + x)\log(1 + \rho)}$, we can rewrite the inequality in~\eqref{eq:sup} as 
\begin{equation}
\label{eq:phiC}
\frac{\rho + \phi(x)}{(1 + x) \log (1 + \rho)} \ge 1 \,.
\end{equation}
Denoting the left-hand side of \eqref{eq:phiC} by $f_\rho(x)$, straightforward calculus gives 
\[
  f_\rho'(x) = \frac{(1 + x)\log(1+x) - \phi(x) - \rho}{(1 + x)^2\log (1 + \rho)} = \frac{x-\rho}{(1 + x)^2 \log (1 + \rho)}\,,
\]
which implies that on $(0,\infty)$ the function $f_\rho$ is minimized at~$x = \rho$, with value $f_\rho(\rho) = \frac{\phi(\rho) + \rho}{(1 + \rho)\log (1 + \rho)} = 1$.
This implies~\eqref{eq:phiC} and thus~\eqref{eq:sup}, since~${x = \rho}$ corresponds to~${\beta = \beta_\rho}$.

We are now ready to prove estimate~\eqref{eq:n_alpha} using induction on~$m$. In what follows we use the shorthand $k_i = k_i(n,\lambda)$ to avoid clutter. 
In the base case~${m = 1}$ we simply have~${Z_1 \sim \Bin(n,p)}$ and~${k_1 = D(n,p)}$, so that~\eqref{eq:n_alpha} with~$m=1$ follows from~\eqref{eq:n_alpha_upp} and~\eqref{eq:n_alpha_low}.
For the induction step we henceforth assume~$m \ge 2$. 
Fix any constant~$\eps \in (0,1)$ such that $\alpha /\eps$ is an integer. Note that if we condition on~${Z_{m-1} \in [x, y)}$ for some numbers~$x, y$, then the random variable~$Z_m$ is stochastically dominated 
by~$\Bin(\floor{y}n,p)$. Therefore
  \begin{align*}
    \prob{Z_m \ge \alpha k_m} &\le \prob{Z_{m-1} \ge \alpha k_{m-1}} \\
&+ \sum_{1 \le i \le \alpha/\eps}  \prob{Z_{m-1} \in [(i-1)\eps k_{m-1}, i\eps k_{m-1})} \prob{\Bin\left( \floor{i\eps k_{m-1}} n, p \right) \ge \alpha k_m} \,.
  \end{align*}
  Note that by \eqref{eq:km_lambda_kmminus} we have $\alpha k_m \ge \alpha k_{m-1} \lambda \ge i\eps k_{m-1}pn$, so the Chernoff bound~\eqref{eq:Chern_upper} applies to the (upper) binomial tail. By invoking the induction hypothesis for~$Z_{m-1}$ (together with the trivial inequality $\prob{Z_{m-1} \ge 0} \le 1 = n^{o(1)}$ in the case~$i = 1$), it follows~that
\begin{equation}\label{eq:Zm_bound}
\begin{split}
    &\prob{Z_m \ge \alpha k_m} \\
     &\le n^{-\alpha + o(1)} + \sum_{1 \le i \le \alpha/\eps} n^{-(i-1)\eps + o(1)} \cdot \exp \left( -i\eps k_{m-1} \lambda \phi \left( \frac{\alpha k_m}{i \eps k_{m-1} \lambda} - 1 \right)  \right)\,,
\end{split}
\end{equation}
  where we omitted the rounding to integers because the function $\mu \mapsto \mu \phi(x/\mu -1 ) = x \log (x/\mu) - x + \mu$ is decreasing on $(0, x]$ (as can be seen by calculating the derivative). 
Recall the definition of $k_m$ from \eqref{eq:km:def}. The logarithm of the $i$th term in \eqref{eq:Zm_bound} is
  \begin{align*}
    -i\eps \log n &\left( 1 + \frac{k_{m-1} \lambda}{\log n} \phi \left( \frac{\alpha \log n}{i\eps \lambda k_{m-1} \log \left( 1 + \frac{\log n}{\lambda k_{m-1}} \right)} - 1 \right) \right)  + (\eps + o(1))\log n\\
    \justify{$\rho_n := \frac{\log n}{\lambda k_{m-1}}, \beta := i\eps /\alpha$}    &= - (\alpha \log n) \cdot \beta \left( 1 + \frac{1}{\rho_n}\phi \left( \frac{\rho_n}{\beta \log (1 + \rho_n)} - 1 \right) \right) + (\eps + o(1))\log n\\
    \justify{\eqref{eq:sup}}    &\le - (\alpha \log n) + (\eps + o(1))\log n \,.
  \end{align*}
  Since $\eps, \alpha$ are constants, using~\eqref{eq:Zm_bound} we obtain~that 
  \begin{equation*}
    \prob{Z_m \ge \alpha k_m}  \le n^{-\alpha + o(1)} + \sum_{1 \le i \le \alpha/\eps} n^{-\alpha + \eps + o(1)} \le (1+\alpha/\eps) \cdot n^{-\alpha + \eps + o(1)} \le n^{-\alpha + \eps + o(1)} \,,
  \end{equation*}
which establishes the upper bound of~\eqref{eq:n_alpha} since~$\eps > 0$ was~arbitrary.

To complete the proof of the induction step~$m \ge 2$, it remains to establish the lower bound of~\eqref{eq:n_alpha}. Define $\alpha_n$ so that $\alpha_n k_m = \ceil{\alpha k_m}$. Since $k_m \to \infty$ by \eqref{eq:km_infty}, we have 
\begin{equation}
  \label{eq:alpha_conv}
  \alpha_n \to \alpha\,.
\end{equation}
With an eye on the form of~\eqref{eq:sup} and~\eqref{eq:Zm_bound}, with foresight we~set
\begin{equation}\label{def:DnGamman}
\rho_n := \frac{\log n}{\lambda k_{m-1}}  
\quad \text{ and } \quad 
\gamma_{n} := \alpha_n \beta_{\rho_n}, 
\end{equation}
where~$\beta_{\rho_n} = \rho_n/(\rho_n + \phi(\rho_n))$ is as in~\eqref{eq:sup}. 

Since $\gamma_n$ is (in general) not a constant, some care is needed when applying the induction hypothesis. We claim that we still~have
\begin{equation}
  \label{eq:ind_hyp_gener}
  \prob{Z_{m-1} \ge \gamma_n k_{m-1}} \ge n^{-\gamma_n + o(1)}.
\end{equation}
To see this, note that~${\limsup_{n \to \infty} \gamma_n \le \alpha}$ (because $\beta_{\rho_n} < 1$ and $\alpha_n \to \alpha$). By the subsubsequence principle it suffices to prove~\eqref{eq:ind_hyp_gener} under the assumption that~${\gamma_n \to \gamma \in [0, \alpha]}$. 
For any constant~$\eps > 0$, the induction hypothesis then implies that, for sufficiently large~$n$,
\begin{equation*}
  \prob{Z_{m-1} \ge \gamma_n k_{m-1}} \ge \prob{Z_{m-1} \ge (\gamma + \eps) k_{m-1}} \ge n^{-(\gamma + \eps) + o(1)} = n^{-(\gamma_n + \eps) + o(1) }.
\end{equation*}
Since~$\eps > 0$ was arbitrary, this implies the claimed lower bound~\eqref{eq:ind_hyp_gener}.

Conditioning on $Z_{m-1} \ge \gamma_n k_{m-1}$ we have that the $Z_m$ stochastically dominates a random variable $X \sim \Bin(\ceil{\gamma_n k_{m-1}}n, p)$. We claim that
\begin{equation}
\label{eq:Zm_lower}
  \prob{X \ge \alpha_n k_m} \ge n^{o(1)}\exp \left( -\gamma_n k_{m-1}\lambda \phi \left( \frac{\alpha_n k_m}{\gamma_n k_{m-1} \lambda} - 1 \right) \right).
\end{equation}
The rounding of $\gamma_n k_{m-1}$ causes a technical issue which we overcome by considering two cases: (i) $\E X = \ceil{\gamma_n k_{m-1}}\lambda \le \alpha_n k_m$ and (ii) $\E X = \ceil{\gamma_n k_{m-1}}\lambda > \alpha_n k_m$.
Recalling that $\lambda \ll \log n$, an easy argument by induction on $i$ shows that 
\begin{equation}
  \label{eq:km_upper}
  k_i \ll (\log n)^i, \quad i = 1, 2, \ldots .
\end{equation}
In the case~(i), recalling that $\alpha_n k_m$ is an integer, Proposition~\ref{prop:lower_poisson} implies
\begin{align*}
  \log \prob{X \ge \alpha_n k_m} &\ge - \ceil{\gamma_n k_{m-1}}\lambda \phi \left( \frac{\alpha_n k_m}{\ceil{\gamma_n k_{m-1}} \lambda} - 1 \right) + O\left( \log \alpha_n k_m \right) \\
  \justify{$\mu \mapsto \mu \phi(x/\mu - 1)$ decreases on $(0,x]$; \eqref{eq:km_upper}, \eqref{eq:alpha_conv}} &\ge -\gamma_n k_{m-1}\lambda \phi \left( \frac{\alpha_n k_m}{\gamma_n k_{m-1} \lambda} - 1 \right) + O \left( \log \log n \right),
\end{align*}
which implies \eqref{eq:Zm_lower} in the case~(i).
In the somewhat degenerate case~(ii),  
we recall that $\alpha_n k_m$ is an integer, and therefore is at most $\floor{\E X}$, which, as is well known, is at most the median of $X$. Hence
\begin{equation*}
  \prob{X \ge \alpha_n k_m} \ge \prob{X \ge \floor{\E X}} \ge 1/2 = n^{o(1)},
\end{equation*}
which is at least the right-hand side of \eqref{eq:Zm_lower}, because $\phi$ takes only nonnegative values.

Finally we are ready to infer the lower bound of \eqref{eq:n_alpha} as follows:
\begin{equation*}
\begin{split}
  &\log \prob{Z_m\geq\alpha k_m} = \log \prob{Z_m \ge \alpha_n k_m} \\
  &\geq \log \big( \prob{Z_{m-1} \ge \gamma_n k_{m-1}} \cdot \prob{X \ge \alpha_n k_m} \big)\\
    \justify{\eqref{eq:ind_hyp_gener}, \eqref{eq:Zm_lower}} & \ge -\gamma_n \log n +o( \log n) -\gamma_n k_{m-1}\lambda\phi\left(\frac{\alpha_n k_m}{\gamma_n k_{m-1}\lambda}-1 \right) \\
    \justify{\eqref{eq:km:def}} &= -(\gamma_n \log n)\left( 1 + \frac{\lambda k_{m-1}}{\log n} \phi \left( \frac{\alpha_n \log n}{\gamma_n \lambda k_{m-1} \log \left( 1 + \frac{\log n}{\lambda k_{m-1}} \right)} - 1 \right) \right) + o(\log n) \\
    \justify{\eqref{def:DnGamman}} &= -(\alpha_n \log n)\beta_{\rho_n} \left( 1 + \frac{1}{\rho_n} \phi \left( \frac{\rho_n}{\beta_{\rho_n} \log \left( 1 + \rho_n \right)} - 1\right) \right) + o(\log n) \\
    \justify{\eqref{eq:sup}} &= -\alpha_n \log n + o(\log n) \\
    \justify{\eqref{eq:alpha_conv}}   &= -(\alpha + o(1)) \log n , 
\end{split}
\end{equation*} 
which completes the induction step and thus the proof of \refL{lem:paths}. 
\end{proof}

\subsection{Proof of main result for paths}\label{sec:proof:paths}
In this section we first deduce Theorem~\ref{thm:paths} from Lemma~\ref{lem:paths} and Proposition~\ref{prop:km_asymp},
and then give the deferred proof of Proposition~\ref{prop:km_asymp}.
\begin{proof}[Proof of Theorem~\ref{thm:paths}]
It suffices to show that, for any constant~$\eps \in (0,1)$, we whp~have 
\begin{equation}
\label{eq:path_LDP}
    (1 - \eps) k_m(n, \lambda) \le M_n \le (1 + \eps) k_m(n, \lambda) .
\end{equation}

We start with the lower bound in~\eqref{eq:path_LDP}.
Let $n^* = n^*(n)$ be the sequence from Lemma~\ref{lem:sparse_lower_GW}. 
Since~$T = P_m$ is the $m$-edge path, the random variable~$f_T(\cT_{n^*,p})$ is exactly the size of the $m$-th level of~$\cT_{n^*, p}$.
Lemma~\ref{lem:paths} with $n=n^*$ implies, for any constant~$\eps \in (0,1)$, that
  \[
    \prob{f_T(\cT_{n^*,p}) \ge (1 - \eps/2)k_m(n^*, pn^*)} \ge (n^*)^{-(1-\eps/2)+ o(1)} \gg (\log n)^{m+1}/n.
  \]
Using the estimate~\eqref{eq:km:asymp}, it is straightforward to check that~$1 \ll pn = \lambda \ll \log n$ and~$n^* \sim n$ imply $k_m(n^*,pn^*) \sim k_m(n, \lambda)$. 
Applying Lemma~\ref{lem:sparse_lower_GW} it thus follows that,~whp, 
  \[
M_n \ge (1 - \eps/2) k_m(n^*, pn^*) \ge (1 - \eps) k_m(n, \lambda)\,,
  \]
which establishes the lower bound in~\eqref{eq:path_LDP}. 

We now turn to the upper bound in~\eqref{eq:path_LDP}. 
Since the random variable~$f_T(\cT_{n,p})$ is the size of the $m$-th level of~$\cT_{n,p}$, Lemma~\ref{lem:paths} implies for any constant~$\eps \in (0,1)$ that 
  \[
    \prob{f_T(\cT_{n,p}) \ge (1 + \eps)k_m(n, \lambda)} \le n^{-(1+\eps) + o(1)} \ll 1/n.
  \]
  The assumption~$\lambda \ll \log n$ implies~$k_1/\lambda = \frac{\log n}{\lambda}/\left(  \log \frac{\log n}{\lambda} \right) \to \infty$, therefore
  using inequality~\eqref{eq:km_lambda_kmminus} we readily infer that~$k_m \ge \lambda^{m-1}k_1 \gg \lambda^m =(pn)^{e_T}$. 
Applying Lemma~\ref{lem:sparse_upper_GW} it thus follows that,~whp, $M_n \le (1+\eps) k_m(n, \lambda)$, which establishes the upper bound in~\eqref{eq:path_LDP},
completing the proof of~Theorem~\ref{thm:paths}. 
\end{proof}

\begin{proof}[Proof of Proposition~\ref{prop:km_asymp}]
Monotonicity of~$k_m$ follows by induction: this is easy for $m=1$, and monotonicity of $k_m$ then follows from monotonicity of~$k_{m-1}$ and $\log$. 

  To prove \eqref{eq:km:asymp} we also use induction on $m$.
  The base case~${m=1}$ is immediate from the definition of $k_1$. In the induction step~${m \ge 2}$ we employ a case distinction.
If ${1 \le \lambda = O(\log^{(m)} n)}$ holds, then by invoking the induction hypothesis (using~$1 \le \lambda = O(\log^{(m)} n) \ll \log^{(m-1)} n$)  it follows~that 
\[
\frac{\log n}{\lambda k_{m-1}(n,\lambda)} \sim \frac{\log \frac{\log^{(m-1)} n}{\lambda}}{\lambda} \sim \frac{\log^{(m)} n }{\lambda} \,,
\]
which in view of~\eqref{eq:km:def} yields the claimed asymptotics~\eqref{eq:km:asymp} in the cases $1 \ll \lambda \ll \log^{(m)}n$ and $\lambda \sim (\log^{(m)} n)/C$. 
If~${\lambda \gg \log^{(m)} n}$ holds, then by invoking monotonicity and the induction hypothesis (using~$1 \ll \log^{(m)} n \ll \log^{(m-1)} n$) it follows that 
\[
 \lambda k_{m-1}(n,\lambda) \gg (\log^{(m)} n) \cdot k_{m-1}\bigl(n,\log^{(m)} n\bigr) \sim \frac{(\log^{(m)} n) \log n}{\log \frac{\log^{(m-1)} n}{\log^{(m)} n}} \sim \log n.
\] 
Hence, combining the definition~\eqref{eq:km:def} and the asymptotics~$\log (1 + x) \sim x$ as~$x \to 0$ with the induction hypothesis, it follows that
\begin{equation*}
\begin{split}
k_m(n,\lambda) & \sim \lambda k_{m-1} (n,\lambda) \\
& \sim 
      \begin{cases}
	\frac{\lambda^{m-i}\log n}{\log \frac{\log^{(i)} n }{\lambda}} \quad &\text{if } \log^{(i+1)} n \ll \lambda \ll \log^{(i)} n \text{ with } 1 \le i \le m-1\,,\\
	\frac{\lambda^{m-i}\log n}{\log (1 + C)}  \quad &\text{if } \lambda \sim (\log^{(i)} n)/C \text{ with } 2 \le i \le m-1, 
      \end{cases}
\end{split}
\end{equation*}
which yields the claimed asymptotics~\eqref{eq:km:asymp} in the remaining cases.
\end{proof}

\section{Spherically symmetric trees: the sparse case}
\label{sec:symm_trees}
In this section we deal with extension counts of spherically symmetric trees~$T_{a,b}$ (defined in Section~\ref{sec:res2}) in the `sparse' case ${1 \ll pn \ll \log n}$. 
In particular, we state and prove Theorem~\ref{thm:symm_trees} below, which generalizes Theorem~\ref{thm:symm_trees_abr} by including the intermediate regime $pn \asymp {\left( \log n / \log \log n\right)^{1 - 1/b}}$ not covered by~\eqref{eq:sph_symm_denser}--\eqref{eq:sph_symm}, 
in which case the optimal strategy  interpolates between the two distinct strategies leading to~\eqref{eq:strategy1} and \eqref{eq:strategy2}.  
As the reader can guess, the proof of Theorem~\ref{thm:symm_trees} is complicated by the fact that we need to rule out existence of other `better' strategies (for creating a vertex with a large extension count). 

We start by recalling that the maximum degree~$\Delta$ of~$\Gnp$ is concentrated around~$D = D(n,p)$ defined in~\eqref{eq:LLN_maxdeg_sparse}.
It is routine to check that the condition $pn \asymp \left( \log n / \log \log n\right)^{1- 1/b}$ is equi\-valent to $D^{b-1} \asymp (pn)^b$, so in \refT{thm:symm_trees} below we do not lose generality by assuming that~$D^{b-1}/(pn)^b$ converges to some limit~${L  \in [0,\infty]}$.

In Theorem~\ref{thm:symm_trees} we refer to the set $\Lambda$ defined in~\eqref{eq:Lambda}, and functions $f_{a,b}$ defined in~\eqref{eq:deg_func}. Note that $f_{0,b} \equiv 1$, so that the term for $m=0$ in \eqref{eq:FL_def} equals $x_0^a$.
  \begin{theorem}[Maximum for trees~$T_{a,b}$, sparse case]
    \label{thm:symm_trees}
Fix~$T=T_{a,b}$, with~${a\ge 1}$ and~${b\ge 2}$. 
Suppose that~$1 \ll pn\ll \log{n}$ and~$D^{b-1}/(pn)^b \to L \in [0, \infty]$. 
Then
\begin{equation}\label{eq:thm:symm_trees:conv}
    \frac{M_n}{\alpha_n} \pto 1 ,
  \end{equation}
where, for a suitable constant $C_{a,b} \in [1,\infty)$, we have 
\begin{equation}\label{eq:thm:symm_trees:conv:asymp}
    \alpha_n := \begin{cases}
    [D (pn)^{b}]^a, \quad &\text{if } L \in [0,1]\,, \\
    [D(pn)^b]^a \sup_{(x_0, \dots, x_k) \in \Lambda} F_L(x_0, \dots, x_k) \quad &\text{if }L \in (1, C_{a,b})\,,\\
    D^{ab} \sup_{(x_1, \dots, x_k) \in \Lambda} f_{a,b}(x_1, \dots, x_k), \quad &\text{if }L \in [C_{a,b}, \infty] ,
    \end{cases}
  \end{equation}
where 
\begin{equation}
\label{eq:FL_def}
  F_L(x_0, \dots, x_k) := \sum_{m = 0}^a \binom{a}{m} L^m x_0^{a-m} f_{m,b}(x_1, \dots, x_k).
\end{equation}
  \end{theorem}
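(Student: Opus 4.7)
The plan is to follow the same template as the proof of \refT{thm:paths}: reduce to estimating the upper and lower tails of $f_T(\cT_{n,p})$ via \refL{lem:sparse_lower_GW} and \refL{lem:sparse_upper_GW}, where $\cT_{n,p}$ is the Galton--Watson tree with $\Bin(n,p)$ offspring distribution. Writing $Z_1 \sim \Bin(n,p)$ for the degree of the root of $\cT_{n,p}$ and $d(u_1), d(u_2), \ldots$ for the i.i.d.\ $\Bin(n,p)$ degrees of its children, we~have
\[
f_T(\cT_{n,p}) \; = \; \sum_{\text{distinct } (i_1, \ldots, i_a) \in [Z_1]} \, \prod_{j=1}^a (d(u_{i_j}))_b .
\]

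The heart of the argument is a large-deviation analysis driven by a family of `profile events' that interpolate between the two strategies discussed in \refS{sec:res2}. For nonnegative reals $(x_0, x_1, \ldots, x_k)$, we will consider the event that the root has about $x_0 D$ children, of which $k$ distinguished ones have degrees about $x_1 D, \ldots, x_k D$ and the remaining ones have degrees about $pn$. Decomposing the sum defining $f_T(\cT_{n,p})$ according to how many of the $a$ chosen children are among the distinguished ones, and using $(pn)^b = D^b / L_n$ with $L_n := D^{b-1}/(pn)^b \to L$, a direct calculation should give
\[
f_T(\cT_{n,p}) \; = \; (1+o(1))\, D^a (pn)^{ab}\, F_{L_n}(x_0, x_1, \ldots, x_k) ,
\]
while the Chernoff bound \eqref{eq:Chern_upper} combined with \refL{lem:n_alpha} and \refP{prop:lower_poisson} should give matching upper and lower bounds of $n^{-x_0 - \sum_i x_i + o(1)}$ for the probability of such a profile event.

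For the \emph{upper bound} we will discretize the space of admissible profiles via an $\eps$-net and apply a union bound: profiles with $x_0 + \sum x_i > 1$ contribute only $o(1/n)$ to $\Pr(f_T(\cT_{n,p}) \ge (1+\eps)\alpha_n)$, while profiles with $x_0 + \sum x_i \le 1$ yield extension counts at most $(1+\eps) D^a (pn)^{ab}\sup F_{L_n}$; profiles with any $x_i > 1$ or with too many distinguished children will be killed by Chernoff bounds. Combining with \refL{lem:sparse_upper_GW} then gives $M_n \le (1+o(1))\alpha_n$ whp. For the \emph{lower bound} we will pick a profile $(x_0^*, x_1^*, \ldots, x_k^*)$ realising $\sup F_L$ up to an additive $\eps$, construct the corresponding event with probability $n^{-x_0^* - \sum x_i^* + o(1)} \gg (\log n)^3/n$ via \refP{prop:lower_poisson}, and conclude via \refL{lem:sparse_lower_GW}.

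Finally we will reduce the uniform expression $\sup_{x_0+\sum x_i \le 1} F_L(x_0, x_1, \ldots, x_k)$ to the piecewise formulas in \eqref{eq:thm:symm_trees:conv:asymp}. Since $F_L$ is strictly increasing in $x_0$ the constraint is always saturated; for $L = 0$ the supremum equals $1$, attained at $x_0 = 1$, while for $L \to \infty$ the identity $F_L(0, x_1, \ldots, x_k) = L^a f_{a,b}(x_1, \ldots, x_k)$ shows that the supremum is $\sim L^a \sup_{\Lambda} f_{a,b}$ and is attained at $x_0 = 0$. A compactness and continuity argument should then yield a finite critical threshold $C_{a,b} \in [1,\infty)$ above which the optimiser has $x_0 = 0$, while for $L \le 1$ a direct perturbation calculation (the corrections $F_L - x_0^a$ are of order $\delta^{b+1}$ near $(1, 0, \ldots, 0)$) should show $\sup F_L = 1$. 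The hard part will be the combinatorial bookkeeping in the upper bound: tracking the joint effect of many slightly-elevated child-degrees, and showing that no such configuration can beat $\sup F_{L_n}$ up to lower-order terms once the probability budget $n^{-1}$ has been fixed.
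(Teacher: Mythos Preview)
Your proposal captures the overall architecture of the paper's proof: reduction to the Galton--Watson tree via \refL{lem:sparse_lower_GW} and \refL{lem:sparse_upper_GW}, a large-deviation analysis via profile events, and a separate treatment of the optimization $\sup F_L(\Lambda)$. The lower-bound construction you sketch is essentially what the paper does (see \refL{lem:symm_trees_GW_large} and \refL{lem:symm_trees_GW_small}).

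However, your upper-bound sketch has a genuine gap that the paper spends most of its effort closing. Your profile events classify the root's children into $k$ distinguished ones with degrees $\sim x_i D$ and the rest with degrees ``about $pn$''. But nothing forces the non-distinguished children to have degree near $pn$: a child with degree in the \emph{intermediate} range $[(1+\delta)pn,\, \eta D)$ (for, say, $\eta = (\log\log n)^{-5}$) is too small to appear in any constant-scale $\eps$-net on the $x_i$, yet contributes a factor $\gg (pn)^b$ to the product. A constant $\eps$-net cannot absorb these children because their number need not be bounded by any constant (it can be of order $\log n/(pn)$), so a union bound over profiles including them is hopeless; and their aggregate contribution is not a priori negligible. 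The paper resolves this by a three-regime decomposition (small/intermediate/high degrees): \refL{lem:crude}, via the auxiliary Facts~\ref{fact:int} and~\ref{fact:midd}, shows that extensions using any intermediate-degree child contribute only $\delta\cdot\max\{[D(pn)^b]^a, D^{ab}\}$, while \refL{lem:large} shows that the \emph{sum} of the high degrees is at most $(1+\delta)D$ with the right probability. Only after these two lemmas does the profile optimization over $(x_0,\ldots,x_k)\in(1+\delta)\Lambda$ go through (see \refP{prop:upperbound}). Your phrase ``tracking the joint effect of many slightly-elevated child-degrees'' gestures at this, but does not identify the mechanism; this is the step where a naive $\eps$-net argument would fail.

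A smaller issue: your perturbation argument for $\sup F_L = 1$ when $L \le 1$ establishes only \emph{local} optimality at $(1,0,\ldots,0)$, not global (and the correction is of order $\delta^b$, not $\delta^{b+1}$). The paper instead uses the homogeneity bound $f_{m,b}(x_1,\ldots,x_k) \le (1-x_0)^{mb}\lambda_m \le (1-x_0)^m \lambda_m$, where $\lambda_m := \sup_\Lambda f_{m,b}$, to bound $F_L$ by a convex combination of the numbers $L^m\lambda_m$; since $\lambda_0 = \lambda_1 = 1 \ge \lambda_m$, this maximum equals~$1$ when $L \le 1$ and equals $L^a\lambda_a$ for $L$ above some $C_{a,b}$; see \refP{prop:optimizer}.
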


  \begin{remark} In the latter two cases the (asymptotic) extreme value $\alpha_n$ depends on the optimal value of an optimization problem.  This problem, especially in the final case, appears quite elementary, as it simply depends on optimizing multivariate polynomials over simple convex sets.  However, we do not know how to find the optimum in general, and it would be interesting to know these optimum values.  It is quite possible that the optimum is always achieved by constant vectors $(1/k,\dots ,1/k)$, for some $k$.  The optimization problem is discussed further in the appendix to the arxiv version of this paper.
\end{remark}
  \begin{proof}[Proof of \refT{thm:symm_trees_abr}]
    Recalling the definition \eqref{eq:LLN_maxdeg_sparse} of $D$, we claim that whenever $pn \gg \left( \log n/\log \log n \right)^{1-1/b}$ then $D^{b-1}/(pn)^b \to 0$. This is straighforward under a stronger assumption $pn \gg (\log n)^{1 - 1/b}$; otherwise we can assume $pn = O((\log n)^{1 - 1/b})$ which implies that $D = O( \frac{\log n}{\log \log n} ) \ll (pn)^{b/(b-1)}$ from which the claim follows by taking power $b-1$. Furthermore $pn \ll \left( {\log n}/{\log \log n} \right)^{1-1/b}$ implies $D^{b-1}/(pn)^b \to \infty$.
    In view of this, now~\eqref{eq:thm:symm_trees:conv}--\eqref{eq:thm:symm_trees:conv:asymp} of \refT{thm:symm_trees} 
    imply \refT{thm:symm_trees_abr}.
  \end{proof}
  Let us provide heuristics for the denominator $\alpha_n$ in Theorem~\ref{thm:symm_trees} in the case $ L \in (0, \infty)$. We will see that a combined strategy gives a lower bound that interpolates between the bounds \eqref{eq:strategy1} and \eqref{eq:strategy2} discussed in the introduction.
      Introducing an extra variable $x_0$, consider a vector $(x_0, x_1, \dots, x_k)$ of positive numbers with $x_0 + \dots + x_k = 1$. It can be shown that with probability $n^{-x_0 + o(1)}$ a vertex has $x_0D$ neighbors of degree $(1 + o(1))pn$. Moreover, with probability $n^{-(x_1 + \ldots + x_k) + o(1)}$ a vertex has $k$ neighbors with their degrees at least $x_1D, \ldots, x_k D$. The probability that a vertex has both types of neighbors is therefore $n^{-(x_0 + \ldots + x_k) + o(1)} \approx n^{-1}$ and thus we expect that such a vertex exists.
      Classifying the extensions according to the number~$m$ of children of the root that are mapped to vertices of degrees $x_1 D, \dots, x_k D$ (and the remaining $a-m$ to the $x_0 D$ vertices of degree~$(1 + o(1))pn$), we thus ought to be able to find a vertex $v$ such that, whp,
    \begin{align*}
      \notag X_v &\ge (1 + o(1))\sum_{m=0}^a \binom{a}{m} (x_0D)^{a-m} (pn)^{(a-m)b} f_{m,b}(x_1, \dots, x_k) D^{mb} \\
       &\sim [D(pn)^b]^a \sum_{m = 0}^a \binom{a}{m} \left[ \frac{D^{b-1}}{(pn)^b} \right]^m x_0^{a-m} f_{m,b}(x_1, \dots, x_k) \\
\notag      &\sim [D(pn)^b]^a F_L(x_0, \ldots, x_k).
    \end{align*}
    Taking the supremum over all vectors $(x_0, x_1, \dots)$ with sum one, we thus ought to be able to find a vertex~$v$ such that, whp, 
    \begin{equation}
      \label{eq:heur_lower}
      X_v \ge (1 + o(1))[D(pn)^b]^a \sup_{(x_0, \dots, x_k) \in \Lambda} F_L(x_0, \dots, x_k).
    \end{equation}

    The following proposition claims that the supremum in \eqref{eq:heur_lower} can be simplified if the limit~$L$ is sufficiently small or large. It would be interesting to know if equality in~\eqref{eq:FL_lower} actually holds for all~$L$, since then Theorem~\ref{thm:symm_trees} would take a simpler form.
\begin{proposition}
  \label{prop:optimizer}
  Let $a \ge 1$ and $b \ge 2$ be integers.
  For every $L \in [0,\infty)$ we have
  \begin{equation}
    \label{eq:FL_lower}
    \sup_{(x_0, \dots, x_k) \in \Lambda} F_L(x_0, \dots, x_k) \: \ge \:  \max \Bigcpar{ 1, \ L^a \sup_{(x_1, \dots, x_k) \in \Lambda} f_{a,b}(x_1, \dots, x_k)} .
  \end{equation}
  Moreover, there is a constant $C_{a,b} \ge 1$ such that 
  \begin{equation}
  \label{eq:FL_cases}
  \sup_{(x_0, \dots, x_k) \in \Lambda} F_L(x_0, \dots, x_k)= 
    \begin{cases}
    1  \quad & \text{if } L \in [0,1]\,, \\
    L^a \sup_{(x_1, \dots, x_k) \in \Lambda} f_{a,b}(x_1, \dots, x_k) \quad &\text{if } L \in [C_{a,b}, \infty) .
    \end{cases}
  \end{equation}
\end{proposition}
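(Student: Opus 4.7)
The plan is to prove the three assertions separately, leveraging the homogeneity of $f_{m,b}$ (as a polynomial of degree $bm$) together with elementary estimates. Write $M := \sup_{\xx \in \Lambda} f_{a,b}(\xx)$, and note that $M > 0$ since evaluating $f_{a,b}$ at the uniform vector $(1/a, \dots, 1/a)$ of length $a$ gives $a!/a^{ab}$. For the lower bound \eqref{eq:FL_lower}, I would simply test $F_L$ at two distinguished vectors in $\Lambda$: setting $x_0 = 1$ and all other coordinates to zero collapses $F_L$ to its $m = 0$ term and yields $F_L = 1$, while setting $x_0 = 0$ kills every term with $m < a$ via the factor $x_0^{a-m}$, leaving $L^a f_{a,b}(\xx)$; taking the supremum over $\xx$ then produces the remaining part of \eqref{eq:FL_lower}.

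For the first case of \eqref{eq:FL_cases} (i.e., $L \in [0,1]$), dropping the distinctness constraint in the definition of $f_{m,b}$ gives $f_{m,b}(\xx) \leq (\sum_i x_i^b)^m$, and the binomial theorem then yields
\begin{equation*}
F_L(x_0, \xx) \;\leq\; \bigpar{x_0 + L \textstyle\sum_i x_i^b}^a \;\leq\; \bigpar{x_0 + \textstyle\sum_i x_i^b}^a \;\leq\; 1,
\end{equation*}
where the last step uses $b \geq 2$ and $x_i \in [0,1]$ to conclude $x_i^b \leq x_i$, so the base is at most $x_0 + \sum_i x_i \leq 1$. Combined with the lower bound, this pins the supremum to $1$.

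For the second case ($L$ large), set $t := x_0 \in [0,1]$ and $s := \sum_{i \geq 1} x_i \leq 1-t$. The plan is to isolate the $m = a$ term of $F_L$ from the rest and show that each leaves slack proportional to $t$. Using homogeneity of $f_{a,b}$ and the elementary estimate $s^{ab} \leq s \leq 1 - t$ (valid because $ab \geq 1$ and $s \leq 1$), I would first obtain
\begin{equation*}
L^a \bigpar{M - f_{a,b}(\xx)} \;\geq\; L^a M \bigpar{1 - s^{ab}} \;\geq\; L^a M t.
\end{equation*}
Then, using the uniform bound $f_{m,b}(\xx) \leq 1$ together with the binomial theorem and the mean value theorem (for $t \leq 1$ and $L \geq 1$), the lower-order tail satisfies
\begin{equation*}
\sum_{m=0}^{a-1} \binom{a}{m} L^m t^{a-m} f_{m,b}(\xx) \;\leq\; (L + t)^a - L^a \;\leq\; a\, 2^{a-1} L^{a-1} t.
\end{equation*}
Combining these two estimates produces a common factor of $t$ and yields $L^a M - F_L(x_0, \xx) \geq L^{a-1} t (LM - a 2^{a-1})$, which is nonnegative once $L \geq C_{a,b} := a 2^{a-1}/M$.

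The main obstacle is the large-$L$ case: the unrefined bounds $f_{a,b}(\xx) \leq M$ and $f_{m,b}(\xx) \leq 1$ are, by themselves, insufficient, because vectors with $x_0$ near $0$ essentially saturate the target inequality $F_L \leq L^a M$, leaving no room to absorb a non-vanishing lower-order contribution. The key insight is the homogeneous refinement $f_{a,b}(\xx) \leq M s^{ab}$, which extracts a factor of $t$ from $1 - s^{ab} \geq t$ that precisely matches the factor of $t$ emerging from the tail $(L+t)^a - L^a$; once both estimates share this common factor, a clean cancellation yields the desired threshold.
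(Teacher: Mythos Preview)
Your proof is correct and differs from the paper's in the upper bounds. For the lower bound \eqref{eq:FL_lower} you do exactly what the paper does. For the upper bounds, the paper proceeds uniformly: it introduces $\lambda_m := \sup_{\Lambda} f_{m,b}$, uses homogeneity of $f_{m,b}$ (degree $mb$) to get $f_{m,b}(x_1,\dots,x_k)\le (1-x_0)^{mb}\lambda_m \le (1-x_0)^{m}\lambda_m$, and then observes that $\sum_{m} \binom{a}{m} x_0^{a-m}(1-x_0)^m L^m\lambda_m$ is a convex combination of the numbers $L^m\lambda_m$, so $\sup_\Lambda F_L \le \max_{0\le m\le a} L^m\lambda_m$. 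Since $\lambda_0=\lambda_1=1$ and $\lambda_m\le 1$, the maximum is $1$ for $L\le 1$ and equals $L^a\lambda_a$ for $L$ large enough. Your route is more hands-on: for $L\le 1$ you drop the distinctness to get $f_{m,b}\le(\sum_i x_i^b)^m$ and close with the binomial theorem; for large $L$ you split off the $m=a$ term, use homogeneity only there to extract the factor $t=x_0$ via $1-s^{ab}\ge t$, and bound the tail by $(L+t)^a-L^a\le a2^{a-1}L^{a-1}t$ using the mean value theorem. The paper's argument is more unified and treats all $m$ symmetrically; yours avoids introducing the $\lambda_m$'s and produces an explicit threshold $C_{a,b}=a2^{a-1}/M$ (which is automatically $\ge 1$ since $M\le 1$), whereas the paper's $C_{a,b}$ is only implicitly defined.
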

\begin{proof}
Inequality \eqref{eq:FL_lower} follows by setting the first argument of~$F_L$ to~${x_0 = 1}$, or setting~${x_0 = 0}$ and optimizing over~$(x_1, \dots) \in \Lambda$. 

To prove \eqref{eq:FL_cases}, writing 
\[
\lambda_m := \sup_{(x_1, \dots, x_k) \in \Lambda} f_{m,b}(x_1, \dots, x_k),\]
we claim that, for every $(x_0, \dots, x_k) \in \Lambda$, we have $f_{m,b}(x_1, \dots, x_k) \le (1 - x_0)^m \lambda_m$. This claim is trivial when $x_0 = 1$ and when $x_0 < 1$ it follows by noting~that
\begin{equation*}
  f_{m,b}(x_1, \dots, x_k) = (1-x_0)^{mb}f_{m,b}\left(\frac{x_1}{1-x_0}, \dots, \frac{x_k}{1-x_0}\right)
  \le (1-x_0)^{mb} \lambda_m \le (1-x_0)^m \lambda_m.
\end{equation*}
Hence, fixing $x_0$ and bounding each term of $F_L$ separately, we get that 
\begin{align*}
  \sup_{(x_0, \dots, x_k) \in \Lambda} F_L(x_0, \dots, x_k) &\le \max_{x_0 \in [0,1]} \sum_{m=0}^a \binom{a}{m} x_0^{a-m} (1-x_0)^m  L^m \lambda_m \\
  &\le \max 
	\bigcpar{  L^m \lambda_m \: : \: m = 0, \dots, a},
\end{align*}
where the last inequality follows because the sum is a convex combination of numbers~$L^m \lambda_m$.
It is easy to show that $\lambda_0 = \lambda_1 = 1$ and that $\lambda_1 \ge \lambda_2 \ge \dots \ge \lambda_a > 0$.
Hence for $L \le 1$, the number $L^m \lambda_m$ is maximized by $m = 0$, together with lower bound~\eqref{eq:FL_lower} giving $\sup_{(x_0, \dots, x_k) \in \Lambda} F_L(x_0, \dots, x_k) = 1$, while for sufficiently large $L$ we have that maximum is $L^a \lambda_a$, giving the second case in \eqref{eq:FL_cases}.
\end{proof}

Our proof of \refT{thm:symm_trees} exploits the auxiliary results from Section~\ref{sec:reduction}, which allow us to focus on the number of ${T_{a,b}}$-extensions of the root in~$\cT_{n,p}$, a Galton--Watson tree with offspring distribution~$\Bin(n,p)$.   
 These extensions may be classified depending on the degrees of the children of the root. In Subsection~\ref{ss:high} we prove bounds related to high degree vertices, and in Subsection~\ref{ss:intermediate} we prove bounds related to intermediate degree vertices. In Subsection~\ref{ss:symm_upper} we use these bounds to prove the upper bound of Theorem~\ref{thm:symm_trees} and in Subsection~\ref{ss:symm_lower} we give a self-contained proof of the lower bound of Theorem~\ref{thm:symm_trees}.

To discuss $\cT_{n,p}$ throughout the whole section, we introduce the following notation. Let~$\neighs \subseteq [n]$ denote the set of indices of the children of the root (i.e., every element of $[n]$ is included in $\neighs$ independently with probability $p$), and let ${\xi_1, \dots, \xi_n}$ be independent random variables with distribution $\Bin(n,p)$ that are also independent from $[n]_p$, so that the degrees of the children of the root are~${(\xi_i : i \in \randset{n}{p})}$.

\subsection{High degree neighbors}
\label{ss:high}
It may well seem natural to consider vertices of large degree being of degree at least~$\eta D$ for some constant~$\eta\in (0,1)$, but we find it more convenient to take
\begin{equation}
    \label{eq:def_eta}
\eta\, :=\, \frac{1}{(\log{\log{n}})^5}\, .
\end{equation}
\refL{lem:large} is the main result of this subsection: it provides convenient bounds on
\begin{equation*}
  H = H(n,p)\, :=\, \sum \{ \xi_i : i \in \neighs, \xi_i \ge \eta D \} \, ,
\end{equation*}
which, in concrete words, is the sum of the degrees of high degree neighbors of the root.
\begin{lemma}
  \label{lem:large} 
Fix~$\theta\in(0,1)$ and $\eps\in (0,1]$.
If~$pn\le (\log{n})^{\theta}$, then
\begin{equation}\label{eq:lem:large:1} 
\prob{H\, \ge\, (1+\eps)D}\, \ll\, 1/n\, ,
\end{equation}
and, for any constant $x\in (0,1 + \eps)$, we also have 
\begin{equation}\label{eq:lem:large:2} 
  \prob{|\neighs|\, \ge\, xD, \;  H\, \ge\, (1-x+\eps)D}\, \ll\, 1/n\, .
\end{equation}
\end{lemma}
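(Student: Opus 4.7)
The plan is to prove both parts by a union bound that discretizes the possible degrees of the ``high degree neighbors'' (those in $\neighs$ with $\xi_i \ge \eta D$), bounds each configuration via a disjoint-indices argument, and invokes the tail estimate from \refL{lem:n_alpha}.

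I would first dispose of neighbors of very large degree. By \refL{lem:n_alpha} and $np = n^{o(1)}$, the probability that some $i \in \neighs$ has $\xi_i \ge (1+\delta/2)D$ is at most $np \cdot n^{-(1+\delta/2)(1+o(1))} \ll 1/n$. For part~\eqref{eq:lem:large:2}, intersecting the analogous event with $\{|\neighs| \ge x_0 D\}$ (which, since $|\neighs| \sim \Bin(n,p)$, has probability at most $n^{-x_0(1+o(1))}$ by \refL{lem:n_alpha}) yields an additional factor $n^{-x_0(1+o(1))}$, keeping the bound $\ll 1/n$. So we may henceforth assume $\xi_i < (1+\delta/2)D$ for every $i \in \neighs$.

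Next, for a small constant $\epsilon > 0$, partition $[\eta, 1+\delta/2]$ geometrically into $K = O(\log\log\log n / \epsilon)$ intervals $[\alpha_k, (1+\epsilon)\alpha_k)$ with $\alpha_1 = \eta$. Let $M_k := |\{i \in \neighs : \xi_i/D \in [\alpha_k, (1+\epsilon)\alpha_k)\}|$, and let $L := |\{i \in \neighs : \xi_i < \eta D\}|$. Since the indices counted by $L$ and by each $M_k$ form disjoint subsets of $[n]$, and the $\xi_i$'s are independent of each other and of the indicators $\indic{i \in \neighs}$, one obtains
\[
\prob{L \ge \ell, \: M_k \ge m_k \: \forall k} \: \le \: \frac{(np)^{\ell + \sum_k m_k}}{\ell! \prod_k m_k!} \: \prod_k \prob{\xi \ge \alpha_k D}^{m_k}
\]
for any nonnegative integers $\ell, m_1, \dots, m_K$. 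Applying \refL{lem:n_alpha} (and using $\alpha_k \ge \eta \gg (\log\log n)/\log n$ to absorb each $(np)^{m_k} = (\log n)^{\theta m_k}$ into $n^{o(\alpha_k m_k)}$) together with Stirling on $\ell!$ and $D = \Theta(\log n/\log\log n)$, the right-hand side simplifies to at most $n^{-\ell/D - \sum_k \alpha_k m_k + o(1)}/\prod_k m_k!$.

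In part~\eqref{eq:lem:large:1} the truncated event forces $\sum_k \alpha_k M_k \ge (1+\delta)/(1+\epsilon) \ge 1 + \delta/2$ for $\epsilon$ small, giving per-configuration probability $n^{-(1+\delta/2)+o(1)}$. In part~\eqref{eq:lem:large:2} the extra constraint $L + \sum_k M_k \ge x_0 D$, combined with the automatic bound $\sum_k M_k \le \sum_k \alpha_k M_k /\eta = O((\log\log n)^5) = o(D)$, forces $L \ge (x_0 - o(1))D$, contributing a further $n^{-x_0 + o(1)}$ factor and again giving $n^{-(1+\delta/2) + o(1)}$ per configuration. Summing over configurations is harmless, since $\sum_{\vec m} 1/\prod_k m_k! \le e^K = n^{o(1)}$ and the sum over $\ell$ is geometric because $np \ll (x_0 - o(1))D$. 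The main technical obstacle will be controlling the $o(1)$ errors carefully while juggling the interacting factors $(np)^\ell$, $\ell!$, $\prob{\xi \ge \alpha_k D}^{m_k}$, and $m_k!$; in particular, the cutoff $\eta = 1/(\log\log n)^5$ is chosen to make $\alpha_k \log n \gg \log\log n$, so that the $(\log n)^{\theta m_k}$ factors from $np = (\log n)^\theta$ never dominate the $n^{-\alpha_k m_k}$ savings coming from \refL{lem:n_alpha}, and the geometric (rather than uniform) partition is convenient because $H/D \le (1+\epsilon) \sum_k \alpha_k M_k$ is then a multiplicative rather than an additive bound.
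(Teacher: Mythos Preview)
Your approach is sound and close in spirit to the paper's: both discretize the possible degrees of the high-degree neighbors and union bound over configurations, using \eqref{eq:xiD}/\refL{lem:n_alpha} to control each configuration. The paper uses a uniform grid of step $\gamma = \delta\eta/16$ and tracks degree profiles $(x_1, \dots, x_k)$ directly, bounding the number of minimal profiles by $((1+3\delta/4)/\gamma)^{4/\eta} \le e^{(\log\log n)^6} = n^{o(1)}$; you use a geometric grid and track bucket counts $(M_k)$, summing with factorial weights $1/\prod_k m_k!$ and obtaining the same $n^{o(1)}$ configuration count via $e^K$. For part~\eqref{eq:lem:large:2}, the paper argues that, conditional on a fixed high-degree profile (involving at most $4/\eta = O((\log\log n)^5)$ indices), the remaining indices lie in $\neighs$ independently, so the probability of having $(x_0 - o(1))D$ further neighbors is at most $n^{-x_0 + o(1)}$; your explicit tracking of the low-degree count $L$ amounts to the same thing.

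Two small points to patch. First, the ``automatic'' bound $\sum_k M_k = O((\log\log n)^5)$ in your part~\eqref{eq:lem:large:2} is not automatic: you need either to invoke part~\eqref{eq:lem:large:1} first (so that $H < (1+\delta)D$ and hence $\sum_k \alpha_k M_k < 1+\delta$ may be assumed), or to split off the case $\sum_k \alpha_k m_k > 2$, noting that there the factor $n^{-\sum_k \alpha_k m_k}$ alone gives $\ll 1/n$. Second, in your disposal of very large degrees for part~\eqref{eq:lem:large:2}, the claimed ``additional factor $n^{-x_0(1+o(1))}$'' from intersecting with $\{|\neighs| \ge x_0 D\}$ is not justified (the two events are not independent), but it is also not needed: the event that some $i \in \neighs$ has $\xi_i \ge (1+\delta/2)D$ already has probability $np \cdot n^{-(1+\delta/2)+o(1)} \ll 1/n$ on its own.
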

In the upcoming proof of~\refL{lem:large}, 
the discussed choice~\eqref{eq:def_eta} of~$\eta$ enables us to use the following simple fact about the tail probabilities of $\xi\sim \Bin(n,p)$. 
\begin{fact} Fix $\theta \in (0, 1)$. If $pn \le (\log n)^\theta$, then for any $x \in [\eta/2, 2]$ we have
  \begin{equation}
\label{eq:xiD}
    \prob{\xi\ge xD}\, \le  n^{-x(1+o(1))} \,.  
  \end{equation}
\end{fact}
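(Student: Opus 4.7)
The Fact is essentially a uniform version of the upper tail bound~\eqref{eq:n_alpha_upp} from \refL{lem:n_alpha}, so my plan is to apply the Chernoff bound~\eqref{eq:Chern_upper_simpler} and verify that the error terms in the resulting exponent are uniformly $o(x \log n)$ on the whole interval $x \in [\eta/2, 2]$.  Writing $u := \log n/pn$, we have $D = \log n/\log u$ and $D \log u = \log n$, and substituting $k = xD$ into~\eqref{eq:Chern_upper_simpler} gives
\[
k \log\bigl(k/(\e pn)\bigr) \;=\; xD\bigl[\log x + \log(u/\log u) - 1\bigr] \;=\; x \log n + xD \log x - xD(\log \log u + 1).
\]
Hence it suffices to show that both $xD \log x$ and $xD(\log \log u + 1)$ are $o(x \log n)$ uniformly for $x \in [\eta/2, 2]$.

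The second error term is easy: $xD(\log \log u + 1)/(x \log n) = (\log \log u + 1)/\log u$, which tends to $0$ because the sparse assumption $pn \ll \log n$ forces $u \to \infty$.  For the first error term, the ratio to $x \log n$ equals $|\log x|/\log u$, and the specific choice~\eqref{eq:def_eta} of $\eta = (\log \log n)^{-5}$ is what keeps this small: it gives $|\log(\eta/2)| = 5 \log \log \log n + O(1)$, and therefore $|\log x| = O(\log \log \log n)$ uniformly for $x \in [\eta/2, 2]$.  Meanwhile, the hypothesis $pn \le (\log n)^\theta$ forces $u \ge (\log n)^{1-\theta}$, and hence $\log u \ge (1-\theta) \log \log n$.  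Combining these gives $|\log x|/\log u = O(\log \log \log n/\log \log n) = o(1)$ uniformly in $x$.

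Putting the two estimates together yields $k \log(k/(\e pn)) \ge x \log n (1-o(1))$ uniformly in $x \in [\eta/2, 2]$, so~\eqref{eq:Chern_upper_simpler} delivers $\prob{\xi \ge xD} \le \exp(-x(1-o(1))\log n) = n^{-x(1+o(1))}$, as claimed.  There is no genuine obstacle here — the argument is a careful bookkeeping of Chernoff — but it hinges on the fact that $\log u$ grows strictly faster than $|\log x|$ on the range of interest, which is guaranteed by combining the hypothesis $pn \le (\log n)^\theta$ (yielding $\log u = \Theta(\log \log n)$) with the poly-logarithmic choice of $\eta$ (yielding $|\log x| = O(\log \log \log n)$).
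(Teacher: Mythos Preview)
Your proof is correct and takes essentially the same approach as the paper: the paper simply invokes inequality~\eqref{eq:n_alpha_upp} of \refL{lem:n_alpha} with $\alpha = x$, whose hypothesis $|\log x| = o\bigl(\log\frac{\log n}{pn}\bigr)$ is exactly the verification you carry out (namely $|\log x| = O(\log\log\log n)$ from $\eta = (\log\log n)^{-5}$, versus $\log\frac{\log n}{pn} \ge (1-\theta)\log\log n$ from $pn \le (\log n)^\theta$). You have merely unpacked the Chernoff computation inside \refL{lem:n_alpha} rather than citing it.
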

\begin{proof}
  Apply inequality \eqref{eq:n_alpha_upp} of Lemma~\ref{lem:n_alpha} with $\alpha = x$.
\end{proof}
\medskip
\begin{proof}[Proof of Lemma~\ref{lem:large}] We will several times tacitly assume that $n$ is larger than a suitable constant, which possibly depends on $\theta$ and $\eps$. 
We start with inequality~\eqref{eq:lem:large:1}. 
If the event $H\, \ge\, (1 +\eps)D$ occurs, then there exist positive numbers $y_1,\dots ,y_k$ such that
\begin{enumerate}
\item[(i)] $\sum_{j=1}^{k}y_j \ge 1 +\eps$,
\item[(ii)] $k \le 4\eta^{-1}$,
\item[(iii)] $y_j\ge \eta$ for all $j \in [k]$, and
\item[(iv)] there exist distinct 
$i_1,\dots ,i_k \in \neighs$ such that $\xi_{i_j} \ge y_{j}D$ for all $j \in [k]$.
\end{enumerate}
(Note that (ii) holds because choosing $y_j$s minimally with respect to (i), we can assume $\sum_j y_j \le 4$, say.) 
With foresight, set $\gamma:=\eps\eta/16=\eps/16(\log{\log{n}})^{5}$. 
By rounding down if necessary, there exists a vector $\xx = (x_1,\dots ,x_k)$ of positive numbers such that
\begin{enumerate}
\item[(i)] $\sum_{j=1}^{k}x_j\ge 1  +3\eps/4$, 
\item[(ii)] $k \le 4\eta^{-1}$,
\item[(iii)] $x_j\ge \eta/2$ and $x_j$ is a multiple of $\gamma$ for all $j \in [k]$, and
\item[(iv)] there exists distinct 
$i_1,\dots ,i_k\in \neighs$ such that $\xi_{i_j}\ge x_{j}D$ for all~$j \in [k]$.
\end{enumerate}
Denote the event in (iv) by $\cE_{\xx}$.
As the number of choices of $\xx$ satisfying (i)---(iii) minimally is at most 
\[
((1 + 3\eps/4)\gamma^{-1})^{4\eta^{-1}}\le e^{(\log \log n)^6} = n^{o(1)} \,,
\]
to establish~\eqref{eq:lem:large:1} it thus suffices to prove that $\prob{\cE_{\xx}}\le n^{-1-\eps/2}$ for all sufficiently large $n$ and any $\xx$ satisfying (i)---(iii). 
Using the union bound, inequality \eqref{eq:xiD}, and independence of $\xi_i$s, it follows that 
\begin{align*}
\prob{\cE_{\xx}}\, &\le\, n^k p^k \cdot \prod_{j=1}^{k}n^{-x_j(1+o(1))}   \le\, (\log{n})^k \cdot n^{-(1+o(1))\sum_{j}x_j} \\
& \le\, n^{o(1)} \cdot  n^{-(1+o(1))(1+3\eps/4)} \le\, n^{-1-\eps/2}
\end{align*}
for all sufficiently large~$n$, completing the proof of inequality~\eqref{eq:lem:large:1}. 

The proof of inequality~\eqref{eq:lem:large:2} is essentially identical. Indeed, this time we have probability at most $n^{-(1-x)-\eps/2}$ that $H\ge (1-x + \eps)D$, and (conditional) probability at most $n^{-x+o(1)}$ that the root has at least $(x+o(1))D$ other neighbors, by \eqref{eq:xiD}.  Thus the probability of the event is at most $n^{-1 - \eps/2 + o(1)}$, which is $o(n^{-1})$, as~required. 
\end{proof}

\subsection{Upper bound on extension counts using intermediate degrees}
\label{ss:intermediate}
We next consider the contribution of neighbors of the root with intermediate degree, between $(1+\eps)pn$ and $\eta D$, where $\eps > 0$ is fixed and $\eta$ is as in~\eqref{eq:def_eta}. 
\refL{lem:crude} is the main result of this subsection: it will later be used to show that contribution from $T_{a,b}$-extensions that use neighbors of intermediate degree is negligible.
\begin{lemma}
  \label{lem:crude}  
  Let~$a \ge 1$ and~$b\ge 2$ be integers. Fix~${\theta<1}$. Suppose that $1\le pn\le (\log{n})^{\theta}$.   
  For any constants $\delta,\eps>0$, with probability $1 - o(1/n)$ the root of $\cT_{n,p}$ has at most 
\begin{equation}
\label{eq:lem:crude2}
  \delta \left( \max\{D(pn)^{b},D^{b}\} \right)^a
\end{equation}
many $T_{a,b}$-extensions in which at least one of the neighbors of the root has intermediate degree, i.e., in the interval $[(1+\eps)pn, \eta D)$.
\end{lemma}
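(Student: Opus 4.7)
The plan is to bound the number of $T_{a,b}$-extensions using at least one intermediate-degree child by $a\, S_{\mathrm{int}}\, S^{a-1}$, where $S := \sum_{v \in N} d(v)^b$ sums the $b$-th powers of all children's degrees (with $N$ the set of root's children in $\cT_{n,p}$) and $S_{\mathrm{int}}$ is the restriction to intermediate-degree children. This follows by counting each extension as an ordered tuple $(v_1, \dots, v_a)$ of distinct children together with $b$ chosen grandchildren per vertex (contributing at most $\prod_i d(v_i)^b$, and hence $S^a$ overall), and then union-bounding over which coordinate $v_i$ has intermediate degree. Abbreviating $\Phi := \max\{D(pn)^b, D^b\}$, it therefore suffices to prove $S = O(\Phi)$ and $S_{\mathrm{int}} = o(\Phi)$ each with probability $1-o(1/n)$; the overall count is then $o(\Phi^a) \le \delta \Phi^a$ whp.

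For $S$ I would split the sum by child degree. The low-degree part $\sum_{d(v) < (1+\eps)pn} d(v)^b \le |N|((1+\eps)pn)^b = O(D(pn)^b)$ holds with probability $1-o(1/n)$ via a Chernoff upper tail on $|N| \sim \Bin(n,p)$; the high-degree part $\sum_{d(v) \ge \eta D} d(v)^b \le D^{b-1} H \le (1+o(1))D^b$ holds whp by \refL{lem:large}. For $S_{\mathrm{int}}$, I would dyadically decompose the intermediate range $[(1+\eps)pn, \eta D)$ into $J+1 = O(\log \log n)$ bins $[c\, pn, 2c\, pn)$ with $c = 2^j(1+\eps)$ and $j = 0, \dots, J$. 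If $S_{\mathrm{int}} \ge \delta \Phi$, then after rounding each degree to its dyadic representative, some multiset $(c_1, \dots, c_k)$ of bin labels satisfies $\sum_i c_i^b \ge \delta \Phi/(2pn)^b$. By the Chernoff bound~\eqref{eq:Chern_upper}, the probability of such a configuration is at most $\prod_i p\,e^{-pn \phi(c_i-1)}$, and summing over all configurations via $\sum_k \binom{n}{k}(p(J+1))^k \le e^{pn(J+1)} = n^{o(1)}$ (using $pn \le (\log n)^\theta$ with $\theta<1$) reduces matters to a uniform lower bound on the probability cost $pn \sum_i \phi(c_i-1)$ over damaging configurations.

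The main obstacle is establishing this lower bound, for which the key analytical input is the ratio bound $c^b/\phi(c-1) \le g^\ast$ on the intermediate range. Since $b \ge 2$, the maximum $g^\ast$ is attained near $c = \eta D/pn$ and satisfies $g^\ast \sim \eta^{b-1}(D/pn)^{b-1}/\log(\eta D/pn)$, so using the asymptotic $D\log(D/pn) \sim \log n$ from~\eqref{eq:LLN_maxdeg_sparse}, any damaging configuration forces $pn \sum_i \phi(c_i-1) \ge C\delta \log n / \eta^{b-1}$ for a universal constant $C > 0$. Recalling the choice $\eta = (\log \log n)^{-5}$ from~\eqref{eq:def_eta}, this yields per-configuration probability at most $n^{-C \delta (\log\log n)^{5(b-1)}}$, and summing gives a total of $o(1/n)$ since $(\log\log n)^{5(b-1)} \to \infty$ for $b \ge 2$. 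Combining $S_{\mathrm{int}} = o(\Phi)$ whp with $S = O(\Phi)$ whp and the reduction in the first paragraph yields $a\,S_{\mathrm{int}}\,S^{a-1} = o(\Phi^a) \le \delta\,\Phi^a$ with probability $1-o(1/n)$, completing the plan.
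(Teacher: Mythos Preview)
Your argument is correct and follows the same overall reduction as the paper: bounding the number of bad extensions by $a\,S_{\mathrm{int}}\,S^{a-1}$ (the paper writes this as $a\,t'_b\,t_b^{a-1}$), and then showing $S = O(\Phi)$ and $S_{\mathrm{int}} = o(\Phi)$, each with probability $1-o(1/n)$.  Your treatment of the low-degree and high-degree contributions to $S$ is also the same (the paper likewise invokes $|N| \le 2D$ for the former and \refL{lem:large} for the latter, noting that if $H \le 2D$ then every high-degree child has degree at most $2D$, so $\sum d(v)^b \le H\cdot(2D)^{b-1}$).

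The one genuine methodological difference is in the intermediate range.  You run a union bound over all \emph{configurations} of children-with-bin-labels and use the analytic ratio inequality $c^b/\phi(c-1) \le g^\ast \sim (\eta D/pn)^{b-1}/\log(\eta D/pn)$ together with $D\log(D/pn)\sim\log n$ to force $pn\sum_i\phi(c_i-1)\gg\log n$ for any damaging configuration; the assumption $pn\le(\log n)^\theta$ is what makes your configuration count $e^{pn(J+1)}=n^{o(1)}$.  The paper takes a more direct route: it proves two short auxiliary facts (Facts~\ref{fact:int} and~\ref{fact:midd}) bounding, for each threshold $d$, the \emph{number} of children with $\xi_i\ge d$ (by $C(pn)^{-1}\log n$ when $d=(1+\eps)pn$, and by $d^{-1}\log n$ when $d\ge64pn$).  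Summing $(d^{-1}\log n)(2d)^b=2^b d^{b-1}\log n$ over the dyadic levels up to $\eta D$ gives $S_{\mathrm{int}}\le D^b/(\log\log n)^2$ directly.  Your configuration argument is heavier but self-contained; the paper's approach is shorter because it isolates the ``how many children have degree at least $d$?'' question into standalone binomial tail estimates and avoids the optimization over~$g^\ast$.
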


The upcoming proof of \refL{lem:crude} is based on the following two facts (Facts~\ref{fact:int} and~\ref{fact:midd}). 
\begin{fact}
  \label{fact:int} 
  Suppose that $1\le pn\le \log{n}$. For every $\eps>0$ there exists~$C>0$ such that 
  \begin{equation*}
    \prob{ |\{ i \in \neighs : \xi_i \ge (1 + \eps)pn\}| \ge C (pn)^{-1} \log n} = O(n^{-2})\,.
  \end{equation*}
\end{fact}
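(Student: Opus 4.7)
The plan is to recognise the count as a binomial random variable and then invoke the Chernoff bounds from Section~\ref{ss:bin_asymp}. Since the child-set $\neighs$ and the degree variables $(\xi_i)_{i \in [n]}$ are mutually independent by the construction of~$\cT_{n,p}$, the count
\[
Y \; := \; \bigabs{\{i \in \neighs : \xi_i \ge (1+\eps)pn\}} \; = \; \sum_{i=1}^{n}\indic{i \in \neighs}\indic{\xi_i \ge (1+\eps) pn}
\]
is a sum of~$n$ independent Bernoulli indicators with common success probability $q := p \cdot \prob{\Bin(n,p) \ge (1+\eps)pn}$, i.e., $Y \sim \Bin(n,q)$. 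The Chernoff bound~\eqref{eq:Chern_upper} applied to the second factor in~$q$ gives $nq \le pn \cdot \e^{-\phi(\eps) pn}$.

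Next I would apply the upper tail bound~\eqref{eq:Chern_upper_simpler} to~$Y$ with $x := C(pn)^{-1}\log n$. Combined with the bound on~$\E Y = nq$ just obtained, this yields
\[
\prob{Y \ge x} \; \le \; \exp\Bigpar{-x \bigpar{\log C + \log\log n - 2\log(pn) - 1 + \phi(\eps) pn}} .
\]
It then suffices to check that, for~$C$ large enough in terms of~$\eps$, the exponent in this bound is at least~$2\log n$ uniformly for~$1 \le pn \le \log n$.

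This last step is the main (though mild) obstacle, and I plan to handle it via a case split on which of $\phi(\eps) pn$ or $\log\log n$ dominates the bracket. When $\phi(\eps) pn \ge 2\log\log n$, the term $\phi(\eps) pn$ overwhelms $-2\log(pn) \ge -2\log\log n$ for large~$n$; the bracket is at least $\tfrac12\phi(\eps) pn$, and multiplying by $x = C\log n/(pn)$ gives at least $\tfrac12 C\phi(\eps) \log n$. When instead $\phi(\eps) pn < 2\log\log n$, we have $pn = O(\log\log n)$, so $2\log(pn) = O(\log\log\log n)$ is negligible against $\log\log n$; the bracket is then at least $\tfrac12\log\log n$, and multiplying by $x \ge \tfrac12 C\phi(\eps)\log n/\log\log n$ again yields at least $\tfrac14 C\phi(\eps) \log n$. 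Choosing~$C \ge 8/\phi(\eps)$ thus ensures the exponent is at least~$2\log n$, so $\prob{Y \ge x} \le n^{-2} = O(n^{-2})$, as required.
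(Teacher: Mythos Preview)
Your proof is correct and follows essentially the same approach as the paper: both recognise the count as (stochastically dominated by) a binomial with mean at most $pn\,\e^{-\phi(\eps)pn}$ and apply an exponential tail bound. The only cosmetic difference is the endgame: you split into cases according to whether $\phi(\eps)pn$ or $\log\log n$ dominates, whereas the paper avoids the split via the one-line observation $\sup_{pn\ge 1}(pn)^2\e^{-cpn/2}<\infty$, which absorbs the $(pn)^2$ factor directly and yields the cleaner bound $\e^{-cpns/2}\le n^{-2}$ for $C\ge 4/\phi(\eps)$.
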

\begin{proof} 
Writing~$I_{\eps} := \{ i \in [n] : \xi_i\ge (1+\eps)pn \}$, our goal is to prove that 
\[
  \prob{|\neighs\cap I_{\eps}|\ge C(pn)^{-1}\log{n}}\, \le \, n^{-2}
\]
for some constant~$C>0$ and sufficiently large $n$.
By the Chernoff bound~\eqref{eq:Chern_upper} there is a constant $c = c(\eps) >0$ such that $\prob{i\in I_{\eps}}\le \e^{-cpn}$.  
Write $s := \ceil{C(pn)^{-1}\log{n}}$. Note that
\begin{equation}
\label{eq:supremum}
    \sup_{pn \ge 1} (pn)^2\e^{-cpn/2} < \infty
\end{equation}
Using the independence of $\xi_i$s and elements of $\neighs$, it follows~that 
\begin{align*} 
  \prob{|\neighs\cap I_{\eps}|\ge C(pn)^{-1}\log{n}}\, &\le \, \binom{n}{s}p^s \bigl(\e^{-cpn}\bigr)^s\\
&\le\, \left(\frac{pn \e^{1-cpn}}{s}\right)^s\\
\justify{\eqref{eq:supremum} and $n$ large enough} &\le\, \e^{-cpns/2}\\
\justify{$C \ge 4/c$} &\le\, n^{-2} \,,
\end{align*}
which completes the proof of Fact~\ref{fact:int}, as discussed. 
\end{proof}
\begin{fact}
   \label{fact:midd}  
If $1\le pn\le \log{n}$, then 
\begin{equation*}
  \max_{d \ge 64pn} \prob{\cE_d} = O(n^{-2})\,,
\end{equation*}
where~$\cE_d := \{ |\{ i \in \neighs : \xi_i \ge d\}| \ge d^{-1}\log n \}$ for any integer~$d \ge 1$\,. 
\end{fact}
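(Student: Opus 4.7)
The plan is to combine stochastic domination with the Chernoff bound~\eqref{eq:Chern_upper_simpler} and a routine union bound. Setting $N_d := |\{i \in \neighs : \xi_i \ge d\}|$, independence of the events $\{i \in \neighs\}$ from each $\xi_i$ (and across $i \in [n]$) shows that $N_d$ has distribution $\Bin(n, q_d)$ with $q_d := p \cdot \prob{\xi \ge d}$ and $\xi \sim \Bin(n, p)$. The standard union bound over $s$-element subsets gives $\prob{N_d \ge s} \le (enq_d/s)^s$ for $s := \ceil{d^{-1}\log n}$, and the task reduces to showing that the base of this exponential is bounded uniformly by $e^{-2d}$.

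Next I would bound this base. Since $d \ge 64 pn$ and $\log(64/e) > 3$, the Chernoff bound~\eqref{eq:Chern_upper_simpler} gives $q_d \le p e^{-3d}$. Combining this with the hypothesis $np \le \log n$ and with $s \ge d^{-1}\log n$ (so that $\log n /s \le d$) I obtain
\begin{equation*}
\frac{enq_d}{s} \; \le \; \frac{e(\log n) e^{-3d}}{s} \; \le \; ed\, e^{-3d},
\end{equation*}
which the elementary inequality $x e^{1-x} \le 1$ (attained at $x = 1$) dominates by $e^{-2d}$.

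Finally, raising to the $s$-th power and using $ds \ge \log n$ yields $\prob{\cE_d} \le e^{-2ds} \le n^{-2}$, uniformly over $d \ge 64 pn$; taking the maximum then gives the claim. The argument is short and I do not foresee any serious obstacle: the only subtlety is choosing the constant $64$ large enough that $\log(64/e) > 3$, which produces a factor $e^{-3d}$ in the Chernoff step and leaves the $e^{-2d}$ margin needed to absorb the stray $ed$ coming from $\log n /s$ before raising to the $s$-th power.
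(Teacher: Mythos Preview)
Your proof is correct and follows essentially the same approach as the paper's: both bound $\prob{\xi_i \ge d} \le e^{-3d}$ via the Chernoff bound~\eqref{eq:Chern_upper_simpler}, apply the union bound $\binom{n}{s}q_d^s \le (enq_d/s)^s$ with $s = \ceil{d^{-1}\log n}$, and then use $ds \ge \log n$ to obtain $n^{-2}$. The only cosmetic difference is that the paper uses $pn \le d$ and $\sup_{d\ge 1} d^2 e^{-d} < \infty$ (requiring ``for $n$ large enough'') to control the base, whereas your use of $pn \le \log n$ and $de^{1-d} \le 1$ gives the bound directly for all~$n$.
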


\begin{proof}
Let $d = d(n) \ge 64pn$ be the integer which maximizes the probability $\prob{\cE_d}$.
Writing~$I_d := \{ i \in [n] : \xi_i\ge d\}$, 
 our goal is to prove that 
\[
  \prob{|\neighs\cap I_d|\ge d^{-1} \log n}\, \le n^{-2} \, ,
\]
for sufficiently large $n$.
As $d\ge 64pn$, it follows from the Chernoff bound~\eqref{eq:Chern_upper_simpler} that
\[
  \prob{i \in I_d} = \prob{\xi_i \ge d} \, \le\, \exp(-d\log(d/\e pn))\, \le\, \e^{-3d}\, .
\]
Write~$s := \ceil{d^{-1}\log{n}}$. Using independence of $\xi_i$s and elements of $\neighs$, it follows~that
\begin{align*}
  \prob{|\neighs\cap I_d|\ge d^{-1}\log{n}}\, &\le\, \binom{n}{s}p^s\bigl(\e^{-3d}\bigr)^s\\
&\le\, \left(\frac{pn \e^{1-3d}}{s}\right)^s\\
\justify{$pn \le d$, $\sup_{d \ge 1} d^2\e^{-d} < \infty$, large $n$} &\le\, \exp(-2ds)\\
&\le\, n^{-2}\,,
\end{align*}
which completes the proof of Fact~\ref{fact:midd}, as discussed. 
\end{proof}

\begin{proof}[Proof of Lemma~\ref{lem:crude}]
 Suppose that the number of $T_{1,b}$-extensions of the root is $t_{b}$, and the number of $T_{1,b}$-extensions of the root in which the neighbor of the root has intermediate degree is $t'_b$.  
We claim that for some constant $C_b$ with probability $1 - o(1/n)$
\begin{equation}
\label{eq:onecrude}
t_b \le  C_b\max\{D(pn)^b,D^{b}\}\,.
\end{equation}
and with probability $1 - o(1/n)$
\begin{equation}
\label{eq:interm}    
t'_b\, \le \, \delta D^{b} \,.
\end{equation}
Since the number of $T_{a,b}$-extensions that we consider is at most $a t'_b (t_b)^{a-1}$, 
bounds \eqref{eq:onecrude} and \eqref{eq:interm} will imply the bound \eqref{eq:lem:crude2} up to adjusting the value of $\delta$.

\smallskip
We start with the proof of \eqref{eq:onecrude}. 
For this note that the number of $T_{1,b}$-extensions of the root is at most $\sum_{i\in \neighs}\xi_i^b$.  We bound contributions to this sum from degrees of different sizes.  All the bounds stated hold with probability $1-o(n^{-1})$, and so \eqref{eq:onecrude} will follow by a union bound.

We first consider small degrees, at most $64pn$. By \eqref{eq:xiD} we may assume the root has degree at most $2D$, and so the contribution of such neighbors is at most $2(64)^bD(pn)^b$.

For degrees in the interval $[d,2d)$, with $64pn\le d<\eta D$, we use Fact~\ref{fact:midd}.  This bounds their contribution by $(d^{-1}\log{n})(2d)^b\, =\, 2^b(\log{n})d^{b-1}$.  We may then sum this over intervals of the form $[2^i pn, 2^{i+1}pn)$ that cover the interval $[64pn, \eta D]$. Since the number of intervals is at most $\log_2 (D/pn) = O(\log \log n)$ and each inequality holds with probability ${1 - O(n^{-2})}$, we have that with probability~${1 - o(n^{-1})}$ the total contribution is at~most
\begin{equation}
\label{eq:interm_larger} 
2^b (\log{n})\sum_{i=6}^{ \floor{\log_{2}(\eta D/pn) } }(2^{i}pn)^{b-1}\, \le\, 2^{b+1}(\log{n})(\eta D)^{b-1}  \le\, \frac{D^b}{(\log\log{n})^2}\, ,
\end{equation}
provided $n$ is sufficiently large (this is one point where we use that~$b \ge 2$ holds). 

For degrees at least $\eta D$, the required bound follows from Lemma~\ref{lem:large} with $\eps = 1$, say.  Indeed, if the sum of the degrees of high degree neighbors is at most~$2D$, then the sum of~$b$th powers of these degrees is at most~$2^b D^b$.  This completes the proof of \eqref{eq:onecrude}.

\smallskip
Finally we prove \eqref{eq:interm}. 
By Fact~\ref{fact:int}, except with probability $O(n^{-2})$, the number of $T_{1,b}$-extensions using a neighbor of degree $d\in [(1+\eps)pn, 64pn]$ is at most 
\begin{equation*}
\frac{C\log{n}}{pn}(64pn)^b\, \le\, 64^{b} C\log{n}(pn)^{b-1}\, \le\, \frac{\delta}{2} D^{b}\, ,
\end{equation*}
for all sufficiently large $n$ (again using that~$b \ge 2$ holds). 
On the other hand, the contribution to $t'_b$ of vertices with degree in the interval $[64pn,\eta D)$, was shown in \eqref{eq:interm_larger} to be $o(D^b)$. This completes the proof of estimate~\eqref{eq:interm}, and thus of Lemma~\ref{lem:crude}.
\end{proof}

\subsection{Proof of the upper bound of Theorem~\ref{thm:symm_trees}}
\label{ss:symm_upper}
In order to prove the upper bound of Theorem~\ref{thm:symm_trees}, in view of Lemma~\ref{lem:sparse_upper_GW}, it essentially (modulo checking that~$\alpha_n \gg \mu_T$) suffices to prove that, for any constant $\eps > 0$, in $\cT_{n,p}$ the number of $T_{a,b}$-extensions of the root exceeds $(1 + \eps)\alpha_n$  with probability~$o(1/n)$. 

Our argument is based on the following two  propositions, whose proofs are deferred.
Proposition~\ref{prop:largepn} deals with the cases where $pn\ge (\log{n})^{1-1/2b}$, which are contained within the case~$D^{b-1}/(pn)^{b}\to 0$, i.e., when $\alpha_n = [D(pn)^b]^a$.  
Proposition~\ref{prop:upperbound} deals with the remaining sparser and more technical cases where~$pn\le (\log{n})^{1-1/2b}$. 
\begin{proposition}
  \label{prop:largepn} 
  Let $a\ge 1$ and $b\ge 2$. Suppose that $(\log{n})^{1-1/2b}\le pn \ll \log{n}$.  For every constant $\delta>0$, with probability $1 - o(1/n)$ the number of $T_{a,b}$-extensions of the root of $\cT_{n,p}$ is at most $(1+\delta)[D(pn)^{b}]^a$.
\end{proposition}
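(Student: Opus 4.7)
The plan is to bound the extension count by a simple sum over the root's neighbors and then control that sum using the degree decomposition machinery developed in Subsections~\ref{ss:high} and~\ref{ss:intermediate}. Writing $X$ for the number of $T_{a,b}$-extensions of the root of $\cT_{n,p}$, each such extension is determined by an ordered choice of $a$ distinct children $i_1,\dots,i_a \in \neighs$ of the root together with, for each~$j$, an ordered choice of $b$ distinct grandchildren below~$i_j$; since in a tree distinct children have disjoint descendants, this yields the deterministic bound
\begin{equation*}
X \; = \; \sum_{\substack{i_1,\dots,i_a \in \neighs \\ \text{distinct}}} \prod_{j=1}^{a}(\xi_{i_j})_b \; \le \; \biggpar{\sum_{i \in \neighs}\xi_i^b}^{a}.
\end{equation*}
Hence it suffices to show, for any constant~$\delta' > 0$, that
\begin{equation*}
\prob{\sum_{i \in \neighs}\xi_i^b \ge (1+\delta')D(pn)^b} \; = \; o(1/n),
\end{equation*}
since then choosing $\delta'>0$ with $(1+\delta')^a \le 1+\delta$ will yield the conclusion.

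To prove the sum estimate, I would fix $\eps = \eps(\delta',b) > 0$ small enough that $(1+\eps)^{b+1} \le 1 + \delta'/2$, and split $\sum_{i \in \neighs}\xi_i^b$ according to whether the child~$i$ has \emph{small} degree $\xi_i \le (1+\eps)pn$, \emph{intermediate} degree $\xi_i \in ((1+\eps)pn, \eta D)$, or \emph{large} degree $\xi_i \ge \eta D$, with $\eta$ as in~\eqref{eq:def_eta}. For the small part, the critical ingredient is the sharp estimate $|\neighs| \le (1+\eps)D$, which holds with probability $1 - n^{-1-\eps+o(1)} = 1-o(1/n)$ by applying~\eqref{eq:n_alpha_upp} of \refL{lem:n_alpha} to the root degree $|\neighs| \sim \Bin(n,p)$; this bounds the small-degree contribution by $(1+\eps)D \cdot ((1+\eps)pn)^b \le (1+\delta'/2)D(pn)^b$. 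For the intermediate part, the dyadic-interval argument already used in~\eqref{eq:interm_larger} from the proof of \refL{lem:crude} gives a bound of $O\bigpar{D^b/(\log\log n)^2}$ (which is precisely the point where $b\ge 2$ is used), and for the large part \refL{lem:large} (applied with $\delta=1$) gives $H \le 2D$, so that the corresponding sum of $b$th powers is at most $H^b \le (2D)^b = O(D^b)$; each of these estimates holds with probability $1-o(1/n)$.

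Both the intermediate and the large-degree contributions are in fact $o(D(pn)^b)$, because the hypothesis $pn \ge (\log n)^{1-1/(2b)}$ combined with~\eqref{eq:LLN_maxdeg_sparse} forces $D = O(\log n/\log\log n)$ and $(pn)^b \ge (\log n)^{b-1/2}$, whence $D^{b-1} \ll (pn)^b$ and thus $D^b \ll D(pn)^b$. Combining the three partial bounds via a union bound therefore yields $\sum_{i \in \neighs}\xi_i^b \le (1+\delta')D(pn)^b$ with probability $1-o(1/n)$, completing the proof. The main obstacle to watch out for is keeping the prefactor truly equal to $1+\delta$ rather than some larger absolute constant as in~\eqref{eq:onecrude}: this hinges entirely on the near-tight bound $|\neighs| \le (1+\eps)D$, whose probability cost $n^{-1-\eps+o(1)}$ is just barely $o(1/n)$, which explains why the sparser regime $pn \ll (\log n)^{1-1/(2b)}$ --- where large-degree neighbors start to dominate and this strategy breaks down --- must be handled separately by Proposition~\ref{prop:upperbound}.
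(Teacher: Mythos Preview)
Your overall strategy is exactly the paper's: reduce to $a=1$ via the deterministic bound $X \le (\sum_{i \in \neighs}\xi_i^b)^a$, control the root degree by the near-tight estimate $|\neighs| \le (1+\eps)D$ (from~\eqref{eq:n_alpha_upp} with constant $\alpha = 1+\eps$, giving failure probability $n^{-1-\eps+o(1)}$), and show that the contribution of neighbors with degree above $(1+\eps)pn$ is $o(D(pn)^b)$.

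However, there is a genuine gap in how you handle the large-degree part. You invoke \refL{lem:large} to obtain $H \le 2D$, but that lemma explicitly assumes $pn \le (\log n)^\theta$ for some \emph{fixed} $\theta < 1$, whereas Proposition~\ref{prop:largepn} allows $pn$ all the way up to $o(\log n)$; for instance $pn = (\log n)/\log\log n$ satisfies the hypothesis here but not that of \refL{lem:large}. The obstruction is real: the proof of \refL{lem:large} rests on~\eqref{eq:xiD} with $x$ as small as $\eta/2 \asymp (\log\log n)^{-5}$, and the underlying~\eqref{eq:n_alpha_upp} then needs $|\log x| = o\bigl(\log\frac{\log n}{pn}\bigr)$, which fails once $\log n/pn$ is only of polyloglog size. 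The paper sidesteps this entirely: instead of stopping the dyadic intervals at $\eta D$ and invoking \refL{lem:large}, it runs the dyadic argument via Fact~\ref{fact:midd} (which only needs $pn \le \log n$) all the way up to $2D$, obtaining a bound of order $D^{b-1}\log n = o(D(pn)^b)$, and then disposes of degrees $\ge 2D$ by the direct union bound $pn \cdot n^{-2+o(1)} = o(1/n)$ using~\eqref{eq:n_alpha_upp} with the constant $\alpha = 2$.

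Two smaller points. First, the display~\eqref{eq:interm_larger} you cite only covers degrees in $[64pn,\eta D]$, so the sub-range $[(1+\eps)pn,64pn]$ still needs Fact~\ref{fact:int} (as in~\eqref{eq:smalld}). Second, your claim $D = O(\log n/\log\log n)$ is false in general (for $pn = (\log n)/\log\log n$ one has $D \sim (\log n)/\log\log\log n$), but the conclusion $D^{b-1} \ll (pn)^b$ you actually need follows already from the weaker $D \ll \log n$ together with $(pn)^b \ge (\log n)^{b-1/2}$.
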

\begin{proposition}\label{prop:upperbound} Let $a\ge 1$ and $b\ge 2$.  Suppose that $1\le pn\le (\log{n})^{1-1/2b}$. 
For every constant $\delta>0$, 
with probability $1 - o(1/n)$
the number of $T_{a,b}$-extensions of the root of $\cT_{n,p}$ is at most  
\begin{equation}
\label{eq:upperbound}
(1 + a^{ab}\delta)^2 \sup_{(x_0,\dots ,x_k)\in (1+\delta)\Lambda} \sum_{m=0}^{a} \binom{a}{m} (x_0 D)^{a-m}(pn)^{(a-m)b}\,  f_{m,b}(x_1, \dots, x_k) D^{mb}\,.
\end{equation}
\end{proposition}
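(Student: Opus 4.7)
The plan is to bound the number of $T_{a,b}$-extensions of the root of $\cT_{n,p}$ by partitioning the root's neighbors into three classes: \emph{low-degree} (degree at most $(1+\eps)pn$), \emph{intermediate-degree} (degree in $[(1+\eps)pn, \eta D)$), and \emph{high-degree} (degree at least $\eta D$), where $\eps>0$ is a small constant and $\eta$ is as in~\eqref{eq:def_eta}. The assumption $1\le pn\le (\log n)^{1-1/(2b)}$ places us within reach of Lemmas~\ref{lem:large} and~\ref{lem:crude} (applied with $\theta = 1-1/(2b)$). Every $T_{a,b}$-extension of the root is obtained by picking an ordered $a$-tuple of distinct neighbors of the root and, for each of them, an ordered $b$-tuple of its children, so the extension count is bounded above by $\sum \prod_j \xi_{i_j}^b$, summed over ordered $a$-tuples $(i_1,\dots,i_a)$ of distinct elements of $\neighs$.

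First I would invoke Lemma~\ref{lem:crude} (with $\delta' > 0$ chosen small in terms of $\delta$) to handle extensions that use at least one intermediate-degree neighbor: their total contribution is at most $\delta' \max\{D(pn)^b, D^b\}^a$ with probability $1-o(1/n)$. The sup in~\eqref{eq:upperbound} is readily seen to be at least $[D(pn)^b]^a$ (from $(x_0,\dots) = (1,0,\dots)$) and also at least $(a!/a^{ab})D^{ab}$ (from the vector $(0,1/a,\dots,1/a)$ with $a$ entries equal to $1/a$), so $\max\{D(pn)^b, D^b\}^a$ is bounded by a constant times the sup, and this contribution can be absorbed into a factor $1+a^{ab}\delta$. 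For the remaining extensions, which use only low- and high-degree neighbors, I would classify them by the number $m\in\{0,\dots,a\}$ of high-degree neighbors appearing in the ordered $a$-tuple. Writing $\ell$ for the number of low-degree neighbors of the root and $y_1,\dots,y_h$ for the degrees of its high-degree neighbors, and uniformly bounding low degrees by $(1+\eps)pn$, this non-intermediate contribution is at most
\begin{equation*}
(1+\eps)^{ab}\sum_{m=0}^a \binom{a}{m}\ell^{a-m}(pn)^{b(a-m)}\, f_{m,b}(y_1,\dots,y_h).
\end{equation*}
Setting $x_0 := \ell/D$ and $x_i := y_i/D$ and using the scaling identity $f_{m,b}(y_1,\dots,y_h) = D^{mb} f_{m,b}(x_1,\dots,x_h)$ rewrites this in the form appearing in~\eqref{eq:upperbound}, and choosing $\eps$ so small that $(1+\eps)^{ab} \le 1+a^{ab}\delta$ produces the second factor of the prefactor $(1+a^{ab}\delta)^2$.

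It remains to show that $(x_0, x_1, \dots, x_h) \in (1+\delta)\Lambda$, i.e., that $\ell/D + H/D \le 1+\delta$ with $H$ as in~\eqref{def:H}, with probability $1-o(1/n)$. I would achieve this by discretizing $x_0$ on a grid of mesh $\delta/2$ in $[0, 1+\delta]$ and applying~\eqref{eq:lem:large:2} at each grid point (with slack parameter $\delta/2$): since $\ell \le |\neighs|$, whenever $\ell \ge x_0^* D$ we also have $|\neighs| \ge x_0^* D$, so the event $\{\ell \ge x_0^* D,\ H \ge (1-x_0^*+\delta/2)D\}$ has probability $\ll 1/n$. A union bound over the $O(1/\delta)$ grid points, together with inequality~\eqref{eq:n_alpha_upp} applied with $\alpha = 1+\delta$ to ensure $|\neighs| \le (1+\delta)D$ whp, yields the desired containment and hence the bound~\eqref{eq:upperbound}. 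The main obstacle is the careful bookkeeping needed to align the three compounding slacks — the intermediate-degree absorption, the uniform bound for low-degree contributions, and the simplex enlargement $\Lambda \to (1+\delta)\Lambda$ — with the required prefactor $(1+a^{ab}\delta)^2$; once the sharp conditional tail inequality~\eqref{eq:lem:large:2} is in hand, the rest is essentially a routine discretization and union bound.
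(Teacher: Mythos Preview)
Your approach is essentially the same as the paper's: split neighbors into low/intermediate/high degree, use Lemma~\ref{lem:crude} to absorb the intermediate contribution (noting the sup dominates both $[D(pn)^b]^a$ and $a^{-ab}D^{ab}$), classify the remaining extensions by the number $m$ of high-degree neighbors, and use Lemma~\ref{lem:large} with a discretization of $x_0$ to force the vector into $(1+\delta)\Lambda$. The only cosmetic difference is that you set $x_0=\ell/D$ while the paper uses $x_0$ with $|\neighs|\le x_0 D$; since $\ell\le |\neighs|$, your version is just as valid. One minor bookkeeping point: to keep the grid points strictly inside the range $(0,1+\delta)$ required by~\eqref{eq:lem:large:2}, you should bound $|\neighs|$ a bit more tightly (say by $(1+\delta/4)D$ rather than $(1+\delta)D$), exactly as the paper does, but this is a trivial adjustment.
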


\begin{proof}[Proof of the upper bound of Theorem~\ref{thm:symm_trees}]
  In view of Lemma~\ref{lem:sparse_upper_GW}, Proposition~\ref{prop:largepn} with~$\delta=\eps$ covers the case $(\log{n})^{1-1/2b}\le pn\ll \log{n}$, since, as we noted above, $\alpha_n = [D(pn)^b]^a$ holds, 
  which due to~$D \gg pn$ (which follows from~$pn \ll \log n$) satisfies~$\alpha_n \gg (pn)^{a+ab} =\Theta(\mu^{e_T})$. 

  Now, for $1 \ll pn\le (\log{n})^{1-1/2b}$, we use Proposition~\ref{prop:upperbound}. We recall from the statement of Theorem~\ref{thm:symm_trees} that we use $L$ for the limit of $D^{b-1}/(pn)^b$ as $n$ tends to infinity (possibly~${L = \infty}$). We defer the choice of~$\delta = \delta(\eps,a,b) > 0$. By Lemma~\ref{lem:sparse_upper_GW} and Proposition~\ref{prop:upperbound} in $\Gnp$ the maximum number of $T_{a,b}$-extensions satisfies, whp, the upper bound 
\begin{align*}
M_n & \le\, (1+a^{ab}\delta)^2\sup_{(x_0,\dots ,x_k)\in (1+\delta)\Lambda} \sum_{m=0}^{a} \binom{a}{m} (x_0 D)^{a-m}(pn)^{(a-m)b}\,  f_{m,b}(x_1, \dots, x_k) D^{mb} \\ 
& \le\, (1+a^{ab}\delta)^{ab+2}\underbrace{\sup_{(x_0,\dots ,x_k)\in \Lambda} \sum_{m=0}^{a} \binom{a}{m} (x_0 D)^{a-m}(pn)^{(a-m)b}\,  f_{m,b}(x_1, \dots, x_k) D^{mb}}_{=: S_n} \,,
\end{align*}
provided that~$S_n \gg  \mu_T = \Theta((pn)^{(b+1)a})$ holds.
We choose $\delta = \delta(\eps, a, b)>0$ small enough such that the factor in front of~$S_n$ is strictly less than~$1 + \eps$. Hence it remains to check that $S_n \sim \alpha_n$ and~$S_n \gg (pn)^{a+ab}$, with $\alpha_n$ as in~\eqref{eq:thm:symm_trees:conv:asymp}. 
For this observe~that
\begin{equation}\label{eq:sn:bound}
\frac{S_n}{[D(pn)^b]^a} = \sup_{(x_0,\dots ,x_k)\in \Lambda}\sum_{m=0}^{a} \binom{a}{m} \underbrace{\biggpar{\frac{D^{b-1}}{(pn)^b}}^m}_{\to L^m} (x_0)^{a-m}\,  f_{m,b}(x_1, \dots, x_k)  \,,
\end{equation}
where, in the case~$L \in [0,\infty)$, the limit of the sum on the right-hand side of~\eqref{eq:sn:bound} equals~$F_L(x_0,\dots ,x_k)$ for~$F_L$ as in~\eqref{eq:FL_def}. 
In the case~$L \in [0,\infty)$ we thus~infer~that
\begin{equation}\label{eq:sn:bound:1}
  S_n \sim [D(pn)^b]^a \sup_{(x_0, \dots, x_k) \in \Lambda} F_L(x_0, \dots, x_k) \qquad \text{ if~$L \in [0,\infty)$,}
\end{equation}
where due to~$D \gg pn$ it is easy to see that~$S_n \asymp [D(pn)^b]^a \gg (pn)^{(b+1)a}$. 
Similarly, in the case~$L=\infty$, it is straightforward to see that the main contribution to the right-hand side of~\eqref{eq:sn:bound} comes from the case~$m=a$, so we~infer~that
\begin{equation}\label{eq:sn:bound:2}
  S_n  \sim D^{ab}\sup_{(x_1, \dots, x_k) \in \Lambda} f_{a,b}(x_1, \dots, x_k) \qquad \text{ if~$L = \infty$,}
\end{equation}
where due to~$D^{b-1}/(pn)^b \to L= \infty$ and~$D \gg pn$ it is easy to see that~$S_n \asymp D^{ab} = (D^b)^a \gg (D (pn)^b)^a \gg (pn)^{(b+1)a} \asymp \mu_T$.
Combining the estimates~\eqref{eq:sn:bound:1} and \eqref{eq:sn:bound:2} with Proposition~\ref{prop:optimizer}, 
it follows that whp $M_n \le (1 + \eps) S_n  \sim (1 + \eps) \alpha_n$,
which completes the proof of the upper bound of Theorem~\ref{thm:symm_trees}.
\end{proof}

We now give the deferred proofs of Propositions~\ref{prop:largepn} and~\ref{prop:upperbound}.
\begin{proof}[Proof of Proposition~\ref{prop:largepn}] Up to adjusting the value of~$\delta$, it clearly suffices to prove the case $a=1$, as the number of $T_{a,b}$-extensions is at most the $a$th power of the number of $T_{1,b}$-extensions.    

  Inequality~\eqref{eq:n_alpha_upp} of Lemma~\ref{lem:n_alpha} implies that with probability $1 - o(1/n)$  the root has at most $(1+\delta^2)D$ neighbors and therefore the number of $T_{1,b}$-extensions of the root which use a neighbor of degree at most $(1+\delta^2)pn$ is at most
\begin{equation}
\label{eq:smalldegs}
(1+\delta^2)D \, \left(1+\delta^2\right)^b(pn)^b\, \le\, \left(1+\delta/2\right)D(pn)^b \,,
\end{equation}
where the inequality holds because we may assume that $\delta$ is smaller than some constant (dependent only on~$b$).

Let $\cE'$ be the event that the root has more than $C\log{n}/pn$ neighbors with degree at least $(1+\delta^2)pn$, where $C=C(\delta^2)$ is the constant given by Fact~\ref{fact:int}. Fact~\ref{fact:int}  implies that $\prob{\cE'} \ll 1/n$.  

Let $I=\{2^{i}pn : i\ge 6, 2^ipn\le 2D\}$, and for each $d\in I$ let $\cE_d$ be the event that the root has more than $\log{n}/d$ neighbors with degrees at least $d$. Note that $\cup_{d \in I} [d, 2d]$ covers all degrees in the interval $[64pn, 2D]$.  
Since $|I| = O(\log n)$, by Fact~\ref{fact:midd} we have $\prob{\bigcup_{d\in I}\cE_d} \ll 1/n$.  

Finally, let $\cE''$ be the event that the root has a neighbor of degree at least $2D$. By inequality~\eqref{eq:n_alpha_upp} the expected number of such neighbors is at most $pn \cdot n^{-2 + o(1)} \ll 1/n$, hence $\prob{\cE''}\ll 1/n$.

We conclude that on the complement of $\cE' \cup \bigcup_d \cE_d \cup \cE''$ (i.e., with probability ${1 - o(1/n)}$) the number of $T_{1,b}$-extensions of the root which use a neighbor with degree at least ${(1+\delta^2)pn}$ is at most
\begin{align*}
\frac{C\log{n}}{pn} (64pn)^b\, +\, \sum_{d\in I} \frac{\log{n}}{d}(2d)^b\, & \le\, 2^{6b}C (pn)^{b-1}\log{n}\, +\, 2^{b+1}(2D)^{b-1}\log{n}\\
\justify{for sufficiently large $n$, using $(\log{n})^{1-1/2b}\le pn \ll \log n$} &\le\,\frac{\delta}{2} D(pn)^{b}\, ,
\end{align*}
which together with \eqref{eq:smalldegs} completes the proof of Proposition~\ref{prop:largepn}.
\end{proof}

\begin{proof}[Proof of Proposition~\ref{prop:upperbound}] 
  We may assume $\delta$ is at most some small constant that depends on $a$ and $b$ only. Set $\eta=(\log{\log{n}})^{-5}$ as in \eqref{eq:def_eta}.  We recall that a vertex degree is \emph{intermediate} if it lies in the interval $[(1+\delta^2)pn, \eta D]$ and \emph{large} if it is at least $\eta D$. In the rest of the proof by \emph{extension} we mean a $T_{a,b}$-extension of the root of $\cT_{n,p}$. Let us call an extension \emph{typical} if no child of the root of intermediate degree is used. Lemma~\ref{lem:crude} with $\eps = \delta^2$ implies that, with probability $1 - o(1/n)$, the total number of extensions which do use a child of the root of intermediate degree is at most 
\[
  \delta \max\{D^a(pn)^{ab},D^{ab}\}.  
\]
Since $D^a(pn)^{ab}$ is at most the supremum in \eqref{eq:upperbound} (consider $x_0=1$, $x_1 = x_2 = ... = 0$), and $D^{ab}$ is at most $a^{ab}$ times the supremum (consider $x_0 = 0, x_1=\dots =x_a=1/a$), with probability $1 - o(1/n)$ the number of non-typical extensions is at most $a^{ab}\delta$ times the supremum in \eqref{eq:upperbound}.

What remains to prove is that, with probability $1 - o(1/n)$, there are nonnegative numbers $x_0,x_1,\dots ,x_k$ such that $\sum_i x_i\le 1+\delta$ and such that the number of typical extensions is at most
\begin{equation}
\label{eq:typical}
  (1 + \delta) \sum_{m=0}^{a} \binom{a}{m} (x_0 D)^{a-m}(pn)^{(a-m)b}\,  f_{m,b}(x_1, \dots, x_k) D^{mb}\,  .
\end{equation}
Recall from Subsection \ref{ss:high} that $\neighs$ is the set of indices of the children of the root, and $H\, =\, \sum \{ \xi_i : i \in \neighs, \xi_i \ge \eta D \}$ is the sum of large degrees among the children of the root.
Having in mind a cover of the interval $[0, (1 + \delta/4)D]$ by a finite number of intervals of length $\delta D/2$, we let $\cE$ be the event that  either $|\neighs| \ge (1 + \delta/4)D$ or there exists a multiple $x_0$ of $\delta/2$ at most $1 + 3\delta/4$ such that
\begin{equation}
\label{eq:event}
|\neighs|\, \in\, [(x_0 - \delta/2) D, x_0 D] \qquad \text{and} \qquad H\, \ge\, (1-x_0+\delta)D \,.
\end{equation} 
Inequality \eqref{eq:xiD} implies that $\prob{|\neighs| \ge (1 + \delta/4)D} = o(1/n)$. Moreover, for a particular~$x_0$, event \eqref{eq:event} has probability at most $o(1/n)$ by Lemma~\ref{lem:large} (with choices $\eps = \delta/2$ and $x = x_0 - \delta/2$, noting that without loss of generality $\eps \le 1$).  Since there is a constant number of choices of $x_0$, an easy union bound gives us that $\prob{\cE}=o(1/n)$.

On the complement of the event $\cE$, there exists $x_0 \in (0, 1 + \delta]$ such that 
\begin{equation*}
  |\neighs|\, \in\, [(x_0 - \delta/2) D, x_0 D] \qquad \text{and} \qquad H\, <\, (1-x_0+\delta)D \,. 
\end{equation*}
In particular, there are nonnegative numbers $x_0, x_1,\dots ,x_k$ with $\sum_{i \ge 0}x_i\le 1+\delta$ such that $|\neighs| \le x_0 D$ and the degrees of children of the root of large degree are given by $x_1D,\dots ,x_kD$. 
A typical extension may only use neighbors of small degree (at most ${(1+\delta^2)pn}$) or large degree.  For each~$m\in \{0,\dots ,a\}$, let~$Y_m$ be the number of typical $T_{a,b}$-extensions in which~$m$ neighbors have large degree and~$a-m$ have small degree.  
Considering the information we have on neighbors of the root, we have that
\[
Y_m\, \le\, \binom{a}{m} (x_0 D)^{a-m}(1+ \delta^2)^{(a-m)b}(pn)^{(a-m)b} f_{m,b}(x_1, \dots, x_k)D^{mb}\, .
\]
Recalling that $\delta$ is at most a small constant, we have that $(1+\delta^2)^{ab}\le 1+\delta$.  
The required bound~\eqref{eq:typical} now follows as the number of typical $T_{a,b}$-extensions is precisely $\sum_{m=0}^{a}Y_m$.\end{proof}

  \subsection{Proof of the lower bound of Theorem~\ref{thm:symm_trees}}
  \label{ss:symm_lower}
In order to prove the lower bound of Theorem~\ref{thm:symm_trees}, in view of Lemma~\ref{lem:sparse_lower_GW}, it suffices to prove, for any~$n^* \sim n$ and any constant~${\eps > 0}$, that in $\cT_{n^*,p}$ the number of $T_{a,b}$-extensions of the root is at least $(1 - \eps)\alpha_n$  with probability~$\omega((\log n)^3/n)$.

Our argument is based on Lemmas~\ref{lem:symm_trees_GW_large} and~\ref{lem:symm_trees_GW_small} below, which effectively give lower bounds on the probabilities of the discussed strategies. 
Recall that $\xi_1, \dots, \xi_n$ are independent variables with distribution $\Bin(n,p)$ and $\neighs \subseteq [n]$ is a binomial subset, so that the degrees of the children of the root in $\cT_{n,p}$ are ${(\xi_i : i \in \neighs)}$.
  \begin{lemma}
    \label{lem:symm_trees_GW_large} 
    Fix real numbers $x_1, \dots, x_k > 0$ such that $x := x_1 + \dots + x_k < 1$. Let $1 \le pn\ll \log{n}$. For any constant $\eps > 0$, with probability at least $n^{-x + o(1)}$ there exist distinct $v_1,\dots, v_k \in \neighs$ with 
    \begin{equation}
      \label{eq:odeg_low}
      \xi_{v_1} \ge (x_1 - \eps)D, \; \dots, \;  \xi_{v_k} \ge (x_k - \eps)D \,.  
    \end{equation}
  \end{lemma}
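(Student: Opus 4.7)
The plan is to exploit the independence structure by partitioning $[n]$ into $k$ pairwise disjoint blocks $A_1, \ldots, A_k$, each of size $\lfloor n/k \rfloor$, and introducing the events
\[
  E_j \, := \, \bigl\{ \exists \, v \in A_j \cap \neighs \, : \, \xi_v \ge (x_j - \eps)D \bigr\} \qquad \text{for each } j \in [k].
\]
Since $\neighs$ is a binomial subset of $[n]$ and the degrees $\xi_1, \ldots, \xi_n$ are mutually independent and independent of $\neighs$, the events $E_1, \ldots, E_k$ depend on disjoint collections of the underlying coordinates and are therefore mutually independent. Moreover, whenever $\bigcap_j E_j$ occurs one may pick any witness $v_j \in A_j \cap \neighs$ with $\xi_{v_j} \ge (x_j - \eps)D$ in each block, and these $v_j$ are automatically distinct because the $A_j$ are disjoint. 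The lemma thus reduces to proving the product bound $\prod_{j \in [k]} \prob{E_j} \ge n^{-x + o(1)}$.

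To estimate each $\prob{E_j}$, I would set $q_j := p \cdot \prob{\xi \ge (x_j - \eps)D}$ with $\xi \sim \Bin(n,p)$, so that $\prob{E_j} = 1 - (1-q_j)^{|A_j|}$. When $x_j > \eps$, inequality~\eqref{eq:n_alpha_low} of Lemma~\ref{lem:n_alpha} applied with $\alpha = x_j - \eps$ gives $\prob{\xi \ge (x_j - \eps)D} \ge n^{-(x_j - \eps) + o(1)}$, whereas for $x_j \le \eps$ the threshold is nonpositive and this probability equals $1$. Combining this with $|A_j| \ge n/(2k)$ and $p \ge 1/n$ (from the standing assumption $pn \ge 1$), a standard two-regime estimate based on $1 - (1-q)^m \ge \min\{mq/2, 1/2\}$ yields
\[
  \prob{E_j} \, \ge \, n^{\min\{\eps - x_j, \, 0\} + o(1)} .
\]
Multiplying over $j$ and using $\max\{x_j - \eps, 0\} \le x_j$ together with $\sum_j x_j = x$ then produces
\[
  \prod_{j \in [k]} \prob{E_j} \, \ge \, n^{-\sum_{j} \max\{x_j - \eps, \, 0\} + o(1)} \, \ge \, n^{-x + o(1)},
\]
as required.

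The main point to be careful about is the case split $|A_j|q_j \ge 1$ versus $|A_j|q_j < 1$ in the bound on $1 - (1-q_j)^{|A_j|}$, and checking that all multiplicative constants depending only on $k$, $\eps$, and on the $o(1)$ terms from Lemma~\ref{lem:n_alpha} get absorbed into the $n^{o(1)}$ error on the logarithmic scale. This absorption is immediate, because the assumption $1 \le pn \ll \log n$ forces $p = n^{-1 + o(1)}$, and $k$ and $\eps$ are fixed, so every such factor contributes at most $O_{k,\eps}(1)$ to $\log \prob{E_j}$, which is negligible compared to $\log n$. In particular there is no need for an $\eps \to 0$ limit: the strategy produces the stronger exponent $-x + k\eps + o(1)$ in the worst case, which is comfortably at least $-x + o(1)$ since $k\eps > 0$. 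The conceptual content is simply that vertices of degree roughly $\alpha D$ cost $n^{-\alpha + o(1)}$ in probability, and independence over disjoint blocks of coordinates makes the joint cost multiplicative.
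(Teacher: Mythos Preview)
Your argument is correct and takes a genuinely different route from the paper. The paper discretizes the \emph{degree} axis: it fixes buckets $V_j=\{v:\xi_v\in[j\eps D,(j+1)\eps D)\}$, shows whp $|V_j|\ge n^{1-j\eps+o(1)}$, and then, conditioning on the $\xi$'s, uses disjointness of the $V_j$ to get independence of the events $\{|[n]_p\cap V_j|=c_j\}$. You instead partition the \emph{vertex} axis into blocks $A_1,\dots,A_k$ and ask each block to produce one suitably high-degree neighbor, with independence coming for free from the product structure over disjoint coordinate sets.

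Your approach is somewhat cleaner: it avoids the discretization bookkeeping and the whp conditioning on the bucket sizes, and it handles the case $x_j\le\eps$ without first reducing $\eps$. The paper's approach, on the other hand, yields the exponent $-\sum_j c_j j\eps+o(1)$ directly, which is sharper than your $-\sum_j\max\{x_j-\eps,0\}+o(1)$, but both are comfortably at least $-x+o(1)$, which is all the lemma asks for. The two-regime bound $1-(1-q)^m\ge\min\{mq/2,1/2\}$ and the absorption of $k$- and $\eps$-dependent constants into $n^{o(1)}$ are both fine as you describe them.
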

  \begin{proof}
    We can assume $\eps < \min \{x_1, \dots, x_k\}$, since decreasing $\eps$ just makes the claim stronger. Cover the interval $[\eps,x]$ with intervals $[j\eps, (j+1)\eps), j \in [J]$, where $J:=\lfloor \eps^{-1}x\rfloor$. For $j \in [J]$ let $c_j = \{ i \in [k] : x_i \in [j\eps, (j+1)\eps) \}$. This implies that
  \begin{equation}
  \label{eq:sum_smaller_GW}
  \sum_{j=1}^{J}c_j \, j\eps\, \le \sum_{i=1}^{k}x_i\, =\, x\, .
  \end{equation}
  For $j \in [J]$, let
  \[
    V_j\, :=\, \{v \in [n]\, :\, \xi_v \in [j\eps D,(j+1)\eps D)\}\, 
  \]
  and let $\cE_j$ be the event that $|\neighs \cap V_j| = c_j$. Let $\cE := \bigcap_{j=1}^{J}\cE_j$. Note that if $\cE$ holds, then there exist $v_1, \dots, v_k \in \neighs$ satisfying \eqref{eq:odeg_low}. It remains to show that $\prob{\cE} \ge n^{-x + o(1)}$.

  We claim that there are integers $w_j = n^{1 - j\eps + o(1)}$, $j \in [J]$ such that
  \begin{equation}
    \label{eq:Wj_lower}
    \prob{|V_1| \ge w_1, \dots, |V_J| \ge w_J} \to 1\,.
  \end{equation}
  Note that $j\eps \le x < 1$ and therefore $w_j = n^{1 - j\eps + o(1)}  \to \infty$.

  To prove \eqref{eq:Wj_lower}, note that $J$ is a constant and therefore it is enough to prove that $|V_j| \ge w_j$ for a single index $j$. Note that $|V_j| \sim \Bin(n, \pi)$, where $\pi \ge n^{- j\eps + o(1)}$, by inequality \eqref{eq:n_alpha_low}. Since $\E |V_j| \ge n^{1 - j\eps + o(1)} \to \infty$, by Chebyshev's inequality, say, we have whp $|V_j| \ge (\E |V_j|)/2 \ge n^{1 - j\eps + o(1)}$, which proves \eqref{eq:Wj_lower}.
  We further condition on an arbitrary realization of the sets $V_j, j \in [J]$ that satisfies $|V_j| \ge w_j$ for every $j \in [J]$.  
  For such $V_j$ we have
  \begin{align*}
    \Pc{\cE_j}{|V_1| \ge w_1, \dots, |V_J| \ge w_J}\, &= \, \binom{|V_j|}{c_j}\, p^{c_j} (1-p)^{|V_j| - c_j} \\ 
    \justify{$|V_j|\in [w_j,n]$}&\ge \, \binom{w_j}{c_j}\, p^{c_j} (1-p)^{n} \\ 
    \justify{$w_j = n^{1 - j\eps + o(1)} \to \infty$, $pn \ll \log n$} &=\, {(pn)^{c_j}} n^{-c_j j\eps} n^{o(1)} \qquad \qquad \qquad \qquad \qquad \qquad \qquad  \\ 
    \justify{$pn \ge 1$} &\ge \, n^{-c_j j\eps + o(1)}\, .
  \end{align*}
  As events $\cE_1, \dots, \cE_J$ are conditionally independent, using inequality \eqref{eq:sum_smaller_GW} we conclude 
  \begin{equation*}
    \Pc{\cE}{|V_1| \ge w_1, \dots, |V_J| \ge w_J} = \prod_{j \in [J]} \prob{\cE_j} \ge n^{-\sum_j c_j j\eps + o(1)}\ge n^{-x + o(1)}\,,
  \end{equation*}
  which with \eqref{eq:Wj_lower} completes the proof of \refL{lem:symm_trees_GW_large}. 
  \end{proof}
  \begin{lemma}
    \label{lem:symm_trees_GW_small} 
    Fix $x_0 \in (0,1]$ and $\delta > 0$. Let $1 \ll pn\ll \log{n}$. With probability at least~$n^{-x_0 + o(1)}$ we have
    \[
      | \{ v \in \neighs : \xi_v \ge (1 - \delta) pn \} | \ge x_0(1 - \delta)D(n,p)\, .
    \]
  \end{lemma}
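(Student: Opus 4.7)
The plan is to produce the required configuration by intersecting two essentially independent events: that $|\neighs|$ is at least $x_0 D$, and that almost every neighbour in $\neighs$ has degree close to the mean $pn$. The first event will be the ``costly'' one, supplying the factor $n^{-x_0 + o(1)}$, while the second will be a high-probability event since the degree variance is small relative to the mean when $pn \to \infty$. Crucially, the independence of $(\xi_1, \dots, \xi_n)$ from $\neighs$ in the construction at the end of \refS{sec:symm_trees}'s introduction allows a clean conditioning argument.

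First I would fix a small constant $\eps > 0$ and apply inequality~\eqref{eq:n_alpha_low} of \refL{lem:n_alpha} with $\alpha = x_0$ to $|\neighs| \sim \Bin(n,p)$; this immediately gives $\Pr(\cA) \ge n^{-x_0 + o(1)}$ for the event $\cA := \{x_0 D \le |\neighs| < (x_0 + \eps)D\}$. Then, since $pn \to \infty$, Chebyshev's inequality (using $\Var \xi_v \le pn$) yields $\Pr(\xi_v < (1-\delta)pn) \le 1/(\delta^2 pn) = o(1)$ for each $v$. Writing $B$ for the number of $v \in \neighs$ with $\xi_v < (1-\delta)pn$ and using independence of $(\xi_i)$ from $\neighs$, conditioning on $\cA$ gives $\E[B \mid \cA] \le (x_0 + \eps) D \cdot o(1) = o(D)$, so Markov's inequality yields $\Pr(B \ge x_0 \delta D/2 \mid \cA) = o(1)$. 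On the intersection of $\cA$ and $\{B < x_0 \delta D/2\}$ the count of good neighbours is at least $|\neighs| - B \ge x_0(1 - \delta/2)D \ge x_0(1-\delta)D$, so the claimed event occurs with probability at least $\Pr(\cA)(1 - o(1)) = n^{-x_0 + o(1)}$.

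The only potential obstacle is verifying that conditioning on $\cA$ does not distort the distribution of the $\xi_v$ — but since $(\xi_i)$ and $\neighs$ are constructed independently, $\cA$ only restricts the law of $\neighs$ and the two events combine exactly as if they were independent. The argument is essentially a ``cost $n^{-x_0}$ to make the root's degree unusually large, free to make the children's degrees typical'' bookkeeping.
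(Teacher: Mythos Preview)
Your argument is correct but differs from the paper's proof. The paper observes directly that the random variable $X := |\{v \in \neighs : \xi_v \ge (1-\delta)pn\}|$ is itself binomially distributed, namely $X \sim \Bin(n,p')$ with $p' = p\cdot \Pr(\xi_1 \ge (1-\delta)pn) \sim p$ (the asymptotic $p' \sim p$ coming from the same Chebyshev estimate you use). Since $p' \sim p$ implies $D(n,p') \sim D(n,p)$, one then applies~\eqref{eq:n_alpha_low} directly to~$X$ with threshold $x_0 D(n,p')$ to obtain the bound in one stroke. Your approach instead splits the event into two pieces---first making $|\neighs|$ large via~\eqref{eq:n_alpha_low}, then controlling the number of ``bad'' neighbours by Markov's inequality---and combines them through conditioning on $\cA$, exploiting the independence of $(\xi_i)$ from $\neighs$. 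Both routes are sound; the paper's is shorter because it avoids the conditioning step entirely, while yours is perhaps more transparent in mirroring the heuristic ``pay $n^{-x_0}$ for the root's degree, children's degrees are free'' that appears earlier in the discussion.
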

  \begin{proof}
    Writing $X = |\left\{ v \in \neighs : \xi_v \ge (1 - \delta)pn \right\}|$, we have $X \sim \Bin(n,p')$, where, in view of $pn \to \infty$, by Chebyshev's inequality,
  \begin{equation*}
    p' = p \cdot \prob{ \xi_1 \ge (1 - \delta)pn } \sim p\,.
  \end{equation*}
  Recall the definition of $D(n,p)$ from \eqref{eq:LLN_maxdeg_sparse}. From $p' \sim p$ and $pn \ll \log n$ it easily follows that for sufficiently large $n$ we have
  \[
    x_0 D(n,p') \ge x_0 (1 - \delta) D(n,p).
  \]
  Combining this with inequality \eqref{eq:n_alpha_low} we obtain that for sufficiently large $n$ 
  \begin{equation*}
    \prob{X \ge x_0(1 - \delta) D(n,p)} \ge \prob{X \ge x_0 D(n,p') } \ge n^{-x_0 + o(1)} \,,
  \end{equation*}
completing the proof of \refL{lem:symm_trees_GW_small}.
  \end{proof}

We are ready to prove the lower bound of Theorem~\ref{thm:symm_trees}. 
  \begin{proof}[Proof of the lower bound of Theorem~\ref{thm:symm_trees}]
Let $n^* = n^*(n) \sim n$ be given by Lemma~\ref{lem:sparse_lower_GW}. 
 Write $D = D(n,p)$ and $D^* = D(n^*,p)$. Note that $n^* \sim n$ and $1 \ll pn \ll \log n$ imply
\begin{equation}
  \label{eq:star_asymp}
  pn^* \sim pn \to \infty \quad \text{and} \quad D^* \sim D \to \infty \,.
\end{equation}
Note that 
\[
  f_T(\cT_{n^*,p}) = \sum_{\text{distinct } v_1, \ldots, v_a \in \neighsstar} \prod_{j \in [a]} (\xi_{v_j})_b \,,
\]
where $\neighsstar$ is a random subset of $[n^*]$ with every element included independently with probability $p$ and $\xi_1, \ldots, \xi_{n^*}$ are independent with distribution $\Bin(n^*,p)$.
Recall that ${D^{b-1}/(pn)^b} \to L \in [0, \infty]$.
Our goal is to show that, 
for every fixed $\eps > 0$, we have 
\begin{align}
\label{eq:prob_zero}
\prob{f_T(\cT_{n^*,p}) \ge (1 - \eps) [D(pn)^b]^a } &\gg \frac{(\log n)^{3}}{n} \quad \text{if } L= 0 \,,\\ 
\label{eq:prob_infty}
\prob{f_T(\cT_{n^*,p}) \ge (1 - \eps) D^{ab} \sup f_{a,b}(\Lambda)} &\gg \frac{(\log n)^{3}}{n} \quad \text{if } L=\infty \,, \\ 
\label{eq:prob_finite}
\prob{f_T(\cT_{n^*,p}) \ge (1 - \eps) [D(pn)^b]^a \sup F_L(\Lambda)} &\gg \frac{(\log n)^{3}}{n} \quad \text{if } L \in (0, \infty) \,,
\end{align}
where the function~$F_L$ is defined as in~\eqref{eq:FL_def} and $f_{a,b}(\Lambda)$ and $F_L(\Lambda)$ are image sets of respective functions.  
Recall that~\eqref{eq:FL_cases} from Proposition~\ref{prop:optimizer} gives
\begin{equation*}
  [D(pn)^b]^a \sup F_L(\Lambda) \sim \begin{cases}
    [D(pn)^b]^a, \quad &L \le 1 \\
    D^{ab} \sup f_{a,b}(\Lambda), \quad &L \ge C_{a,b}\,.
  \end{cases}
\end{equation*}
Hence, once estimates~\eqref{eq:prob_zero}--\eqref{eq:prob_finite} are established, the auxiliary `transfer result' Lemma~\ref{lem:sparse_lower_GW} will imply the lower bound in Theorem~\ref{thm:symm_trees}. 

\smallskip 

To prove \eqref{eq:prob_zero}, we defer the choice of the sufficiently small constant~$\delta = \delta(\eps,a,b) > 0$.
Invoking Lemma~\ref{lem:symm_trees_GW_small} with $x_0 = 1 - \delta$, it follows that, with probability at least
\[
  (n^*)^{\delta -1 + o(1)} = n^{\delta - 1 + o(1)} \gg \frac{(\log n)^3}{n}\,,
\]
we have $|\{ i \in \neighsstar : \xi_i \ge (1 - \delta) p n^* \}| \ge (1 - \delta)^2 D^*$ and therefore 
\begin{align*}
f_T(\cT_{n^*,p}) &\ge ( (1 - \delta)^2 D^* )_a (((1-\delta)pn^*)_b)^a \\
\justify{\eqref{eq:star_asymp}}  & \sim  ((1 - \delta)^2 D[( 1 - \delta) pn]^b)^{a}  \\
 &= (1 - \delta)^{a(b+2)} [D(pn)^b]^a \,,
\end{align*}
which, by choosing~$\delta$ small enough such that $(1 - \delta)^{a(b + 2)} > 1 - \eps$, establishes~\eqref{eq:prob_zero}.

\smallskip 

To prove \eqref{eq:prob_infty}, we again defer the choice of the constant~$\delta = \delta(\eps,a,b)>0$,  and use that by continuity of $f_{a,b}$ there is an integer $k \ge 1$ and a vector $(x_1, \dots, x_k) \in (0, 1)^k$ such that $\sum_{i \ge 1} x_i < 1$ and $f_{a,b}(x_1, \dots, x_k) \ge (1 - \delta)\sup f_{a,b}(\Lambda)$. Let $\delta' = \delta \min \{x_1, \dots, x_k \}$. Lemma~\ref{lem:symm_trees_GW_large} implies that, with probability at least
\[
  (n^*)^{-\sum_{i \ge 1} x_i + o(1)} = n^{- \sum_{i \ge 1} x_i + o(1)} \gg \frac{(\log n)^3}{n} \,,
\]
there exist distinct $v_1, \dots, v_k \in \neighsstar$ with $\xi_{v_i} \ge (x_i - \delta') D^* \ge (1 - \delta)x_i D^*$ and therefore 
\begin{align*}
f_T(\cT_{n^*,p}) &\ge  \sum_{\text{distinct } i_1, \ldots, i_a \in [k]} \prod_{j \in [a]} \left( (1 - \delta) x_{i_j}D^* \right)_b \\
\justify{$\delta < 1$, \eqref{eq:star_asymp}} & \sim (1 - \delta)^{ab}D^{ab} f_{a,b}(x_1, \dots, x_k)  \\
  &\ge (1 - \delta)^{ab + 1} D^{ab} \sup f_{a,b}(\Lambda) \,,
\end{align*}
which, by choosing~$\delta$ small enough such that~$(1 - \delta)^{ab + 1}> 1 - \eps$, establishes~\eqref{eq:prob_infty}.

\smallskip 

Finally we prove the `hybrid' case~\eqref{eq:prob_finite}, 
where we again defer the choice of~$\delta = \delta(\eps,a,b) > 0$. By continuity of $F_L$, there is an integer $k \ge 0$ and $\xx = (x_0, x_1, \dots, x_k) \in (0,1)^{k+1}$ such that $\sum_{i \ge 0} x_i < 1$ and $F_L(\xx) \ge (1 - \delta)\sup F_L(\Lambda)$. Since $\sum_{i \ge 0} x_i < 1$, Invoking \refL{lem:symm_trees_GW_large} (with $\eps = \delta \min \left\{ x_1, \dots, x_k \right\}$) and~\refL{lem:symm_trees_GW_small} then implies that, with probability at least 
\[
  (n^*)^{-\sum_{i \ge 0} x_i + o(1)} = n^{-\sum_{i \ge 0} x_i + o(1)} \gg \frac{(\log n)^{3}}{n} \,,
\]
there exist distinct $v_1, \dots, v_k \in \neighsstar$ with $\xi_{v_i} \ge (1 - \delta)x_i D^*$ for $i \in [k]$ and at the same time there are at least $x_0(1 - \delta)D^* - k$ elements $v \in \neighsstar \setminus \{v_1, \ldots, v_k\}$ such that ~${\xi_v \ge (1 - \delta) p n^*}$. This implies, using~\eqref{eq:star_asymp}, that $f_T(\cT_{n^*,p})$ is at least
\begin{align*}
  \sum_{m = 0}^a \binom{a}{m} (x_0&(1 - \delta)D^* - k)_{a-m} (((1-\delta)pn^*)_b)^{a-m} \sum_{\text{distinct } i_1, \ldots, i_m \in [k]} \prod_{j \in [m]} \left( (1 - \delta)x_{i_j}D^* \right)_b \\
   \justify{$\delta < 1$, \eqref{eq:star_asymp}} & \sim \sum_{m = 0}^a \binom{a}{m} (x_0(1-\delta)D(( 1 - \delta) pn)^b)^{a-m} \cdot (1 - \delta)^{mb}f_{m,b}(x_1, \dots, x_k) D^{mb} \\
   &\ge  (1 - \delta)^{a(b+1)} (D(pn)^b)^a \sum_{m = 0}^a \binom{a}{m} \left( \frac{D^{b-1}}{(pn)^b} \right)^m x_0^{a-m} f_{m,b} (x_1, \ldots, x_k) \\
   \justify{${D^{b-1}}/{(pn)^b} \to L$} &\sim (1 - \delta)^{a(b + 1)} (D(pn)^b)^a F_L(x_0, \ldots, x_k) \\
   &\ge (1 - \delta)^{a(b + 1) + 1} (D(pn)^b)^a \sup F_L(\Lambda)\,.
\end{align*}
By choosing $\delta$ small enough such that $(1 - \delta)^{a(b + 1) +1} > 1 - \eps$, we conclude that~\eqref{eq:prob_finite} holds, which completes the proof of the lower bound of Theorem~\ref{thm:symm_trees}.
\end{proof}

\section{Minimum tree extension counts}\label{sec:minimum}
In this paper we always studied the maximum rooted tree extension count, but it is also natural to ask about the minimum count instead.
Below we record that our methods provide the following answer to this extreme value theory question, where the `sparse' case~$pn \ll \log n$ is degenerate due to isolated~vertices in~$\Gnp$. Since this is not our main focus, we did not investigate the finer details of the phase transition of~$\min_{v \in [n]}X_v$, i.e., how it decreases from order~$\mu_T =\Theta((pn)^{e_T})$ down to~zero, 
which we leave as an interesting open problem. 
\begin{theorem}[Minimum for general trees, dense case]\label{thm:min_ext} 
Fix a rooted tree~$T$, with root degree~$a$. 
If~$pqn \gg \log n$, then 
\begin{equation}
  \label{eq:LLNvariance:min}
  \frac{\mu_T  \, -\, \min_{v \in [n]}X_v}{\sigma_T \sqrt{2\log n} }\, \pto\, 1\,,
\end{equation}
where the variance~$\sigma_T^2$ is as in \refT{thm:max_ext}.
If~$pn \ll \log n$, then 
\begin{equation}
\label{eq:LLNvariance:min:sparse}
  \min_{v \in [n]}X_v = 0 \qquad \text{whp}.
\end{equation}
\end{theorem}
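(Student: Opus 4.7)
The plan is to handle the two regimes separately, re-using the machinery already developed in the paper.

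For the dense case $pqn \gg \log n$, the strategy is to mirror the proof of \refT{thm:max_ext}, replacing the maximum degree by the minimum degree. Setting $\eta := \sqrt{2q\log n/(pn)}$, Proposition~\ref{prop:max_deg_dense} gives $\min_{v \in [n]} d(v) = pn\bigpar{1 - \eta(1 + o_p(1))}$, and note that $\eta \to 0$ under the assumption. Since $pqn \gg \log n$ guarantees $pqn \ge C_0\log n$ for any fixed $C_0 > 1$ and large $n$, the uniform estimate~\eqref{eq:Yt_conc} of Proposition~\ref{prop:degtoext} applies; combining it with the monotonicity of $x \mapsto x^a$ on $[0,\infty)$, together with the fact that $C(pn)^{e_T-1}\sqrt{\log n}\ll \eta(pn)^{e_T}$, one obtains
\begin{equation*}
\min_{v \in [n]} X_v \, = \, \Bigpar{\min_{v \in [n]}d(v)}^{a}(pn)^{e_T-a} \, + \, o_p\bigpar{\eta(pn)^{e_T}} \, = \, (pn)^{e_T}\bigpar{1 - a\eta + o_p(\eta)}.
\end{equation*}
As in the proof of \refT{thm:max_ext}, combining $\mu_T = (pn)^{e_T}(1 + o(\eta))$ from Proposition~\ref{prop:variance} with $a\eta(pn)^{e_T}\sim \sigma_T\sqrt{2\log n}$ then yields~\eqref{eq:LLNvariance:min}.

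For the sparse case $pn \ll \log n$, I would argue via isolated vertices. Assuming $T$ has at least one edge (as otherwise $X_v \equiv 1$ and the statement fails, so this degenerate case is implicitly excluded), observe that if $v$ is isolated in $\Gnp$ then any $T$-extension of $v$ would need to map some child of the root of $T$ to a neighbor of $v$ in $\Gnp$, which is impossible; hence $X_v = 0$. It therefore suffices to show that $\Gnp$ contains an isolated vertex whp, which is a classical fact (see, e.g., \citeref{JLRbook}{Theorem~3.1}): the expected number $\E Z = n(1-p)^{n-1} \sim n\e^{-pn}$ of isolated vertices satisfies $\E Z \ge n^{1 - o(1)} \to \infty$ under the assumption $pn \ll \log n$, and the standard second moment computation (using $\Pr(u,v \text{ both isolated}) = (1-p)^{2n-3}$) then gives $Z \ge 1$ whp.

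Both parts should be essentially routine, given the earlier machinery in the paper. I do not anticipate any serious obstacle: the dense case is the symmetric counterpart of \refT{thm:max_ext} modulo a sign flip, and the sparse case reduces to the classical threshold for the appearance of isolated vertices in sub-connectivity-regime~$\Gnp$.
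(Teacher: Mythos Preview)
Your proposal is correct and follows essentially the same approach as the paper: for the dense case you mirror the proof of \refT{thm:max_ext} with the minimum degree in place of the maximum (exactly as the paper does), and for the sparse case you use that $\Gnp$ whp has an isolated vertex, which is equivalent to the paper's appeal to the minimum degree being zero whp. Your explicit remark about excluding the degenerate single-vertex tree is a reasonable clarification that the paper leaves implicit.
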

\begin{proof}
In the `dense' case~$pqn \gg \log n$, the minimum and maximum degree of~$\Gnp$ both deviate by the same amount from asymptotics~\eqref{eq:minmaxdegree}, modulo the sign.
Hence~\eqref{eq:LLNvariance:min} follows from~\eqref{eq:Yt_conc} in the same way as~\eqref{eq:LLNvariance} of \refT{thm:max_ext} follows from~\eqref{eq:Yt_conc} of Proposition~\ref{prop:degtoext} (see the proof in \refS{sec:proof_dense_deduction}). 

In the `sparse' case~$pn \ll \log n$ it is well-known that the minimum degree of~$\Gnp$ whp equals zero (see, e.g., ~\citeref{Bollobas01}{Exercise~3.2}), which immediately implies~\eqref{eq:LLNvariance:min:sparse}.  
\end{proof}

\footnotesize
\bibliographystyle{plain}
\bibliography{extreme_extensions}
\normalsize

\appendix

\section{Deferred routine proofs}
  \label{app:proofs}
In this appendix we give the deferred proofs of Propositions \ref{prop:lower_poisson}, \ref{prop:max_deg_dense}, \ref{prop:max_deg_asymp},  \ref{prop:variance} and Lemma~\ref{lem:n_alpha} (see Section~\ref{sec:prelim}): these are conceptually routine, but we include them for~completeness.

\begin{proof}[Proof of Proposition~\ref{prop:lower_poisson}]
Recalling~$k=(1 + \eta)pn$,
by using an approximation by a point probability of the Poisson distribution with mean $pn$ (see \citeref{Bollobas01}{eq. (1.14)}) and Stirling's formula (see \citeref{Bollobas01}{eq. (1.4)}) we obtain
\begin{align*}
	\prob{X \ge k} &\ge \prob{X = k} 
	\ge \frac{(pn)^k}{k!\e^{pn}} \cdot \e^{-((pn)^2 + k^2)/n} 
	= \frac{1}{\sqrt{2 \pi k}}\left(\frac{\e pn}{k}\right)^k \cdot \e^{-pn + o(1)} \\
	&= \exp \left( - k \log (k/(pn))  + k - pn  - \log \sqrt{2 \pi k} + o(1)\right) \\
	&= \exp \left( - pn \phi(\eta)  + O(\log k)\right) \,,
\end{align*}
which establishes inequality~\eqref{eq:bin_lower2}.
\end{proof}

\begin{proof}[Proof of \refL{lem:n_alpha}]
We start with inequality~\eqref{eq:n_alpha_upp}. 
By the simplified Chernoff bound~\eqref{eq:Chern_upper_simpler} and the assumption $\frac{\log n}{pn} \to \infty$, we obtain
\begin{equation*}
\begin{split}
  -\log \prob{\xi \ge \alpha D} 
  & \ge \frac{\alpha \log n}{\log \frac{\log n}{pn}}\log \frac{\alpha \log n}{\e pn \log \frac{\log n}{pn}}  \\
  & = \frac{\alpha \log n}{\log \frac{\log n}{pn}} \cdot \left( \log \frac{\log n}{pn} - \log \log \frac{\log n}{pn} + \log {\alpha} - 1\right) \\
  & \sim \alpha \log n \,,
\end{split}
\end{equation*}
which establishes inequality~\eqref{eq:n_alpha_upp}.

Next we prove inequality \eqref{eq:n_alpha_low}. 
Note that condition~$1 \le pn \ll \log n$ implies
\begin{equation*}
\frac{\alpha D}{pn}=\frac{\alpha \log n}{pn \log\left(\frac{\log n}{pn}\right)} \gg 1 \quad \text{and} \quad 1 \ll D \ll \log n \,.
\end{equation*}
By Proposition~\ref{prop:lower_poisson} and the asymptotics~\eqref{eq:phi_asymp}, it thus follows that
\begin{align*}
  -\log \prob{\xi \ge \alpha D}  &\le -\log \prob{\xi = \ceil{\alpha D}} \\
  &\le pn \phi\left( \frac{\ceil{\alpha D}}{pn}-1 \right)  + O(\log \ceil{\alpha D}) \\
  & \sim \alpha D \log \frac{\alpha D}{pn}\\
  &= \frac{\alpha \log n}{\log \frac{\log n}{pn}} \cdot \left( \log \frac{\log n}{pn} - \log \log \frac{\log n}{pn} + \log \alpha\right) \\
  &\sim \alpha \log n\,,
\end{align*}
establishing that~$\prob{\xi \ge \alpha D} \ge n^{-\alpha + o(1)}$.
Combining this lower bound with the upper bound~\eqref{eq:n_alpha_upp}, 
it readily follows (using that $\alpha,\eps>0$ are constants) that 
\begin{equation*}
  \prob{\alpha D \le \xi < (\alpha + \eps)D}  \ge n^{-\alpha + o(1)} - n^{-(\alpha + \eps) + o(1)} = n^{-\alpha + o(1)}\,,
\end{equation*}
completing the proof of inequality~\eqref{eq:n_alpha_low}. 
\end{proof}

The proofs of Propositions~\ref{prop:max_deg_dense} and~\ref{prop:max_deg_asymp} rely on the following maximum degree criterion,
which follows from Theorem~3.2 and Theorem~3.1 parts (i) and (ii) in \cite{Bollobas01}.
\begin{theorem}[\cite{Bollobas01}]
  \label{thm:threshold}
  Assume that~$pqn \ge 1$. For any natural number~$k = k(n)$, 
  \begin{equation*} 
    \prob{\Delta(\Gnp) \ge k} 
		\to \begin{cases}
      0 & \;\text{if $n \cdot \prob{\Bin(n-1,p) \ge k} \to 0$,}\\ 
      1& \;\text{if $n \cdot \prob{\Bin(n-1,p) \ge k} \to \infty$.}
    \end{cases}
  \end{equation*}
\end{theorem}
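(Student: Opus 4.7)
The plan is to introduce the count $Y := |\{v \in [n] : d(v) \ge k\}|$, so that $\{\Delta(\Gnp) \ge k\} = \{Y \ge 1\}$ and, by symmetry, $\E Y = n \cdot \prob{\Bin(n-1,p) \ge k}$. The 0-statement then follows immediately: Markov's inequality gives $\prob{\Delta \ge k} \le \E Y \to 0$.

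For the 1-statement, where $\E Y \to \infty$, I would apply Chebyshev's inequality in the form $\prob{Y = 0} \le \Var Y / (\E Y)^2$, so the task reduces to showing $\Var Y = o((\E Y)^2)$. The diagonal contribution $\sum_v \Var(\II[d(v) \ge k])$ is at most $\E Y = o((\E Y)^2)$, so the heart of the matter is the off-diagonal covariance. For distinct vertices $u \neq v$, the key observation is that $d(u)$ and $d(v)$ depend on the indicator $X_{uv}$ of the edge $uv$ together with independent $\Bin(n-2,p)$ random variables $D'_u, D'_v$ via $d(u) = X_{uv} + D'_u$ and $d(v) = X_{uv} + D'_v$. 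Conditioning on $X_{uv}$, writing $\alpha := \prob{\Bin(n-2,p) \ge k}$ and $\beta := \prob{\Bin(n-2,p) \ge k-1}$, a short calculation yields $\prob{d(u) \ge k, d(v) \ge k} = q\alpha^2 + p\beta^2$ and $\prob{d(u) \ge k} = q\alpha + p\beta$, from which
\[
\Cov\bigpar{\II[d(u) \ge k], \II[d(v) \ge k]} = pq(\beta - \alpha)^2 = pq \cdot \prob{\Bin(n-2, p) = k-1}^2.
\]
Summing over all ordered pairs then bounds the off-diagonal contribution to $\Var Y$ by $n^2 pq \cdot \prob{\Bin(n-2, p) = k-1}^2$.

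The main obstacle will be showing that this covariance bound is negligible compared to $(\E Y)^2$, i.e., that the ratio
\[
\frac{pq \cdot \prob{\Bin(n-2, p) = k-1}^2}{\prob{\Bin(n-1,p) \ge k}^2}
\]
tends to $0$ under the assumptions $pqn \ge 1$ and $n \cdot \prob{\Bin(n-1,p) \ge k} \to \infty$. This is a quantitative comparison between a binomial point probability and a nearby tail probability, and behaves differently in the dense regime $pqn \gg \log n$ (where one uses a local central limit estimate, giving a ratio of order $t/\sigma$ with $\sigma^2 = pqn$ and $t$ the normalized deviation) and in the sparse regime $pqn = O(\log n)$ (where one exploits the geometric decay of consecutive binomial point probabilities past the mode, together with Poisson-type tail estimates). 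Once this estimate is in hand, Chebyshev's inequality yields $\prob{Y = 0} \to 0$, completing the proof.
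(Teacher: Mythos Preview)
The paper does not give its own proof of this statement: it simply cites Bollob\'as~\cite{Bollobas01}, specifically Theorem~3.2 and Theorem~3.1 parts~(i) and~(ii), and uses the result as a black box. Your proposal is therefore not competing with a proof in the paper but rather reconstructing the textbook argument, and the first/second moment scheme you outline is precisely the one Bollob\'as uses.

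Your $0$-statement and your covariance identity $\Cov = pq\,\bigpar{\prob{\Bin(n-2,p)=k-1}}^2$ are correct. The only substantive gap is the final ratio estimate, which you flag as the ``main obstacle'' and then sketch via a dense/sparse dichotomy. That plan is sound but not carried out. Two remarks that make it go through cleanly: first, for $k$ within a bounded number of standard deviations of the mean one can use the uniform bound $\prob{\Bin(n-2,p)=k-1}\le C/\sqrt{(n-2)pq}$ on the maximal point mass, so that $\sqrt{pq}\cdot\prob{\Bin(n-2,p)=k-1}=O(n^{-1/2})$ while $\prob{\Bin(n-1,p)\ge k}$ is bounded away from~$0$. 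Second, for $k$ well above the mean, the identity $\prob{\Bin(n-2,p)=k-1}=\tfrac{k}{(n-1)p}\,\prob{\Bin(n-1,p)=k}$ reduces the question to showing $k\sqrt{q/p}=o(n)$, and this follows from the hypothesis $n\cdot\prob{\Bin(n-1,p)\ge k}\to\infty$ combined with a Chernoff bound (which forces $k=(1+o(1))\mu$ when $p$ is bounded away from~$0$, and $k=o(\sqrt{pn})$ when $p$ is small). Filling in these two cases completes the argument; nothing in your outline is wrong, but as written it stops one lemma short of a proof.
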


\begin{proof}[Proof of Proposition~\ref{prop:max_deg_dense}]
It is enough to prove~\eqref{eq:minmaxdegree} for the maximum degree~$\Delta$, since the other limit follows by considering the random variable~${(n - 1) - \delta}$, which happens to be the maximum degree in the complement of $\Gnp$ which is distributed as $\G_{n,q}$ (note that both the condition $pqn \gg \log n$ and the deno\-minator $\sqrt{2pqn \log n}$ are unchanged if we swap the roles of~$p$ and~$q$).

Under the additional assumption $pqn \gg \log^3 n$, the maximum degree limit in~\eqref{eq:minmaxdegree} follows from \citeref{Bollobas01}{Corollary~3.4}. So we can henceforth assume $\log n \ll {pqn = O(\log^3 n)}$.

To complete the proof of Proposition~\ref{prop:max_deg_dense} it remains to show, for an arbitrary small number~${\eps > 0}$,~that 
  \begin{equation}
    \label{eq:Delta_bounds}
    pn + h_- \le \Delta \le pn + h_+ \quad \text{whp},
  \end{equation}
  where $h_\pm := \sqrt{ {(2 \pm \eps) pqn \log n} }$. For this we apply Theorem~\ref{thm:threshold}. For~${X \sim \Bin(n-1,p)}$ it thus suffices to check that the following two inequalities hold:
  \begin{align}
\label{eq:expected_degs_large}
\prob{X \ge pn + h_+} &\ll 1/n\,,\\
\label{eq:bin_point_more_n}
\prob{X \ge pn + h_-} & \gg 1/n \,.
\end{align}
 
  To prove \eqref{eq:expected_degs_large}, note that the variance $\sigma^2$ of $X$ satisfies $\sigma^2 \sim pqn \gg \log n$ and therefore $h_+ \sim  \sigma \sqrt{(2 + \eps)\log n} \ll \sigma^2$. A standard Bernstein bound (see \citeref{JLRbook}{eq.~(2.14)}) gives 
  \begin{equation*}
    \prob{X - p(n-1) \ge h_+} \le \exp \left( - \frac{h_+^2}{2(\sigma^2 + h_+/3)}  \right) = \e^{ - (1 + \eps/2 + o(1))\log n } \ll \frac{1}{n} \,,
  \end{equation*}
  which implies \eqref{eq:expected_degs_large}.

  To prove \eqref{eq:bin_point_more_n}, we apply the following inequality (see \citeref{Bollobas01}{Theorem 1.5}), which we simplify for ease of application (formally weakening it using the inequalities~${p, q \le 1}$). Namely, for any integer~$k = pn + h < n$ such that $h > 0$ and $pn \ge 1$, we~have
  \begin{equation}
\label{eq:lower_bollobas}
\prob{\Bin(n,p) = k} \ge \frac{1}{\sqrt{2\pi pqn}} \exp \left( - \frac{h^2}{2pqn} \left( 1 + \frac{h}{pqn} + \frac{2h^2 }{3(pqn)^2} + \frac{1}{h} \right) - \beta \right) \,, 
  \end{equation}
	where $\beta := 1/(12k) + 1/(12(n-k))$. 
  We apply inequality~\eqref{eq:lower_bollobas} with $k := \ceil{pn + h_- + 1}$. 
	Since $nq \ge pqn \gg h_-$, it is easy to check that $n-k \to \infty$ and therefore $\beta \to 0$. Moreover $h = k-pn = h_- + O(1) \sim \sqrt{(2 - \eps)pqn \log n}$ whence $1 \ll h \ll pqn$. Now \eqref{eq:lower_bollobas} implies
  \begin{equation*}
    \prob{X \ge pn + h_-} \ge \prob{\Bin(n,p) = k} \ge \frac{1}{\sqrt{2\pi pqn}}\exp\left( - \frac{(2 - \eps)\log n}{2}(1 + o(1)) \right) \,.
  \end{equation*}
  Recalling that we also assume $pqn = O( \log^3 n)$, we therefore obtain 
  \begin{equation*}
    \prob{X \ge pn + h_-} \ge \exp \left( -[1 - \eps/2 + o(1)] \log n + O(\log \log n)\right) \gg 1/n\,, 
  \end{equation*}
  which implies \eqref{eq:bin_point_more_n}, completing the proof of estimate~\eqref{eq:Delta_bounds} and thus of Proposition~\ref{prop:max_deg_dense}. 
\end{proof}

\begin{proof}[Proof of Proposition~\ref{prop:degree_range}]
Note that by the subsubsequence principle we can assume that $pn/(\log n) \to c \in [C_0, \infty]$. 
For $c \in [C_0, \infty)$ the claim follows from \citeref{Bollobas01}{Exercise~3.4}, while for $c = \infty$ it follows from Proposition~\ref{prop:max_deg_dense}.
\end{proof}

\begin{proof}[Proof of Proposition~\ref{prop:max_deg_asymp}]
First, the asymptotics~\eqref{eq:alpha_n_sim} follow directly from~\eqref{eq:phi_inv}.
Turning to the remaining proof of~\eqref{eq:LLNmaxdeg}, set~$\eps := 1/(\log \log n)$ and let 
  \[
    k_\pm := \left( 1 + \eta_\pm \right) pn, \quad \text{with} \quad \eta_\pm := \phi^{-1}\left( (1 \pm \eps)\frac{\log n}{pn} \right) \,. 
  \]
    Using asymptotics \eqref{eq:phi_inv} one can check that $1 \ll k_- = O \left(  \log n  \right)$, and therefore 
    \begin{equation*}
      1 \ll \log k_- = O(\log \log n)\,.
    \end{equation*}
    By monotonicity of $\phi^{-1}$ and assumption $1 \ll pn = O(\log n)$ we have $\eta_- = \Omega(1)$ which implies the following two sets of~inequalities:
    \begin{gather}
		\label{eq:alpha_grows}
      \alpha_n \ge \eta_-pn= \Omega(pn) \to \infty\,,\\
\label{eq:kfloorpos}
pn \le  \floor{k_-} \le k_- \,.
  \end{gather}
	Hence Proposition~\ref{prop:lower_poisson} implies that 
    \begin{align*}
      \log \prob{\Bin(n,p) \ge \floor{k_-}} &\ge -pn \phi\left( \frac{\floor{k_-}}{pn}- 1 \right) + O(\log k_-) \\
      \justify{\eqref{eq:kfloorpos}, $\phi$ is increasing on $[0,\infty)$} &\ge - pn \phi (\eta_-) + O(\log k_-) \\
      &= -(1 - \eps)\log n + O(\log \log n)\,, 
    \end{align*}
		from which it readily follows that 
    \begin{equation*}
      n \cdot \prob{\Bin(n-1,p) \ge \floor{k_-} - 1}  \ge n\cdot \prob{\Bin(n,p) \ge \floor{k_-}} \to \infty \,.
    \end{equation*}
    On the other hand, the Chernoff bound~\eqref{eq:Chern_upper} implies
    \begin{equation*}
		\begin{split}
      n \cdot \prob{\Bin(n-1,p) \ge k_+} & \le n \e^{-p(n-1)\phi(\eta_+)} \\
			& = \exp \left( \log n - (1 + \eps) \frac{p(n-1)\log n}{pn} \right) \to 0\,.
		\end{split}
    \end{equation*}
    Consequently, by Theorem~\ref{thm:threshold} it follows that whp~$\floor{k_-} - 1 \le \Delta < k_+$. 
    Note~that 
    \[
      {k_\pm - pn} = pn \phi^{-1}\left( (1 \pm \eps) \frac{\log n}{p(n-1)} \right) \sim pn \phi^{-1}\left( \frac{\log n}{pn} \right) = \alpha_n \,.
    \]
    where $\sim$ can be justified using the subsubsequence principle (which allows us to assume that both arguments of~$\phi^{-1}$ converge to~$c \in (0, \infty]$; then we use continuity of function~$\phi^{-1}$ when~$c < \infty$, and the asymptotics~\eqref{eq:phi_inv} of~$\phi^{-1}$ when~$c = \infty$). 
		In view of~\eqref{eq:alpha_grows} it also follows that~$\floor{k_-} -1 - pn\sim \alpha_n$, completing the proof of~\eqref{eq:LLNmaxdeg}.
\end{proof}

\begin{proof}[Proof of Proposition~\ref{prop:variance}]
Writing $N := (n-1)_{v_T - 1}$, let $T_1, \dots, T_N$ denote all~$T$-extensions of vertex~$v$ in~$K_n$. For convenience we will sometimes treat extensions as subgraphs of $K_n$ (remembering that several extensions correspond to the same subgraph). 
Let~$I_i$  denote the indicator random variable for the event that~${T_i \subseteq \Gnp}$. Since~$\E I_i = p^{e_T}$ and~$X_v = I_1 + \dots + I_N$, using~$e_T=v_T - 1$ it follows that the expectation of~$X_v$~satisfies
\[
  \mu_T=\E X_v = N p^{e_T} = (n-1)_{e_T} p^{e_T} = (pn)^{e_T}(1 + O(1/n)) \,.
\]

We now turn to the variance~$\sigma_T^2=\Var X_v$. 
Writing~$\Cov(I_i, I_j) = \E \left( I_i - p^{e_T} \right) \left( I_j - p^{e_T} \right)$, 
by observing that  $\Cov(I_i, I_j) = 0$ whenever $E(T_i) \cap E(T_j) = \emptyset$ it follows that
  \begin{equation}
    \label{eq:var_sum}
    \sigma_T^2  = \E \Big( \sum_i (I_i - p^{e_T}) \Big)^2 = \sum_{i,j} \Cov(I_i, I_j) = \sum_{i,j : E(T_i) \cap E(T_j) \neq \emptyset} \Cov(I_i, I_j)\,.
  \end{equation}
  We will see that the leading term in \eqref{eq:var_sum} comes from 
  the pairs $(i,j)$ for which~$T_i$ and~$T_j$ overlap in exactly one edge that is incident to the root~$v$. The number of such pairs is~$a^2 n^{2e_T - 1}(1 + o(1))$, since, having chosen $T_i$ (in one of $N \sim n^{e_T}$ ways), there are $a$ choices of the overlapping edge $e$ in $T_i$ and $a$ choices of which edge of $T$ is mapped to $e$ in $T_j$, prescribing exactly one vertex in $T_j$ and leaving $n^{e_T - 1}(1 + o(1))$ choices for the remaining vertices of $T_j$. 
  Given $k \in \{ 2, \cdots, e_T \}$, the number of pairs $(i,j)$ with~${e_{T_i \cap T_j} = k}$ is~${O(n^{2e_T - k})}$, since fixing $k$ edges in $T_i$, due to the tree structure fixes at least~$k$ vertices of~$T_j$. Moreover, the number of pairs $(i,j)$ such that $T_i$ and $T_j$ share exactly one edge, but this edge is not incident to the root $v$, is $O(n^{2e_T - 2})$. 
  Since for any pair sharing $k$ edges we~have
  \begin{equation*}
    \Cov\left( I_i, I_j \right) = p^{2e_T - k} (1 - p^{k}) \asymp p^{2e_T - k}q \,,
  \end{equation*}
it follows that the variance of~$X_v$ satisfies 
  \begin{align*}
    \sigma_T^2 &= a^2 n^{2e_T - 1} p^{2e_T - 1} q \left( 1 + o(1) \right) + O\left(n^{2e_T - 2}p^{2e_T-1}q \right) + O\left( \sum_{k=2}^{e_T} (pn)^{2e_T - k} q\right) \\
    &= a^2 n^{2e_T - 1}p^{2e_T - 1}q \left( 1 + O\left(\frac{1}{pn}\right) \right) \sim \frac{a^2 \mu_T^2 q}{pn} \,,
  \end{align*}
  completing the proof of Proposition~\ref{prop:variance}. 
\end{proof}

\ifarxiv
\section{Optimizing \texorpdfstring{$f_{a,b}$}{f{a,b}} }
\label{app:optimising}
In this appendix we consider optimization of the functions~$f_{a,b}$ over the set~$\Lambda$, which give the limit \eqref{eq:sph_symm} in Theorem~\ref{thm:symm_trees} for the  tree~$T_{a,b}$.
For the reader's convenience we recall the relevant definitions:
\begin{gather*}
f_{a,b}(x_1,\dots ,x_k)\, :=\, \sum_{\text{distinct } i_1, \ldots, i_a\in [k]}\quad  \prod_{j\in [a]}\, x_{i_j}^b \, ,\\ 
\Lambda := \bigcup_{k \ge 1} \Bigcpar{(x_1,\dots ,x_k)\in [0,\infty)^k\, :\, \sum_{1 \le i \le k}x_i\, \le\, 1\, } \, . 
\end{gather*}
In particular, in this appendix we shall determine~$\sup f_{a,b}(\Lambda)$ explicitly in the two special cases~$a=1$ and~$(a,b)=(2,2)$, 
more precisely that~${\sup f_{1,b}(\Lambda) = 1}$ and~${\sup f_{2,2}(\Lambda) = 1/8}$, see~\eqref{eq:f1b} and~\eqref{eq:f22} below.

A natural first guess would be that the supremum is attained by $\xx = (1/a, \cdots, 1/a) \in \Lambda_a$. However, already for $a=6$ and $b=2$ we have $f_{6,2}(1/7,\dots,1/7) \approx 3.6 \cdot 10^{-7} > 3.3 \cdot 10^{-7} \approx f_{6,2}(1/6,\dots,1/6)$, which indicates the determining $\sup f_{a,b}(\Lambda)$ is non-trivial.
Since the sets $\Lambda_n$ are compact and $f_{a,b}$ are continuous, we have
\begin{equation}
  \label{eq:supsupmax}
  \sup f_{a,b}(\Lambda) = \sup_{n \ge 1} \max f_{a,b}(\Lambda_n) \qquad \text{for} \qquad \Lambda_n := \bigcpar{(x_1,\dots ,x_n)\in \Lambda } .
\end{equation}
(It can be shown---although we know no trivial proof of this---that the supremum on the right hand side is always attained by some $n$.)

\subsection{Special case \texorpdfstring{$a=1$}{a=1}}
It is easy to see that for $a = 1$ we have 
\begin{equation}\label{eq:f1b}
\sup f_{1,b}(\Lambda) = f_{1,b}(1) = 1,
\end{equation}
because for any $\xx \in \Lambda$ we have $f_{1,b}(\xx) \le \left( \sum_i x_i \right)^b \le 1$.

\subsection{Special case \texorpdfstring{$a=2$}{a=2} and \texorpdfstring{$b=2$}{b=2}}
This subsection is organized as follows: we fist narrow down the potential maximizers for $a = 2$ and arbitrary $b \ge 2$, and then determine the explicit maximizer in the case $a = b = 2$. 

Note that $f_{a,b}(\Lambda_n) = 0$ for $n < a$, and clearly $\max f_{a,b}(\Lambda_n)$ is attained by a sequence with $\sum_{i \ge 1} x_i = 1$. In order to determine maximizers over~$\Lambda_n$, we can without loss of generality assume that $x_i > 0$ for every $i \in [n]$, since otherwise we can pass to the set~$\Lambda_{n-1}$. If a vector $\xx =(x_1, \cdots, x_n) \in (0,1)^n$ is a maximizer in $\Lambda_n$, then $(x_1, \cdots, x_{n-1})$ is a stationary point of an $(n-1)$-variable function $F(x_1, \cdots, x_{n-1}) = \frac{1}{a!}f_{a,b}(x_1, \cdots, x_{n-1}, x_n)$ with $x_n := 1 - \sum_{i = 1}^{n-1} x_i$ (we divide by $a!$ to have summation over unordered sets of indices), that is, at $(x_1, \cdots, x_{n-1})$  all partial derivatives of $F$ are zero.

\begin{proposition}
  \label{prop:opt_a_two}
  For $a = 2$ and arbitrary $b \ge 2$ there is $n \ge 2$ such that 
  \begin{equation*}
    \sup f_{2,b}(\Lambda) = \max f_{2,b}(\Lambda_n)
  \end{equation*}
  and the maximum in $\Lambda_n$ is attained by either a sequence $(1/n, \cdots, 1/n)$ or $(y, \cdots, y, z) \in (0,1)^n$, where $y < z$ satisfy equations $(n-1)y + z = 1$ and
  \begin{equation}
\label{eq:nontriv} (n-2) \sum_{j = 0}^{b-2} y^{j + 1}z^{b-2-j} = z^{b-1}.
  \end{equation}
\end{proposition}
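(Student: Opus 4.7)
The plan is to fix an $n$ achieving the supremum, reduce to an interior maximizer on the simplex $\sum_i x_i = 1$, and then analyze the critical-point structure via Lagrange multipliers followed by an AM-GM argument. By~\eqref{eq:supsupmax} together with the parenthetical remark preceding the proposition, I may fix the smallest $n \ge 2$ with $\sup f_{2,b}(\Lambda) = \max f_{2,b}(\Lambda_n)$. Any maximizer $\xx^* \in \Lambda_n$ then satisfies $x_i^* > 0$ for every $i$ (else we could pass to a smaller $n$) and $\sum_i x_i^* = 1$ (else rescaling $\xx^* \mapsto c\xx^*$ with $c > 1$ would strictly increase $f_{2,b}$, since $f_{2,b}(c\xx^*) = c^{2b}f_{2,b}(\xx^*)$). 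The Lagrange condition $\partial_i f_{2,b}(\xx^*) = \lambda$ simplifies to $bx_i^{b-1}(T - x_i^b) = \lambda/2$ for every $i$, where $T := \sum_i (x_i^*)^b$. Hence the univariate function $h(x) := Tx^{b-1} - x^{2b-1}$ takes the same value at every $x_i^*$. A short calculation gives $h'(x) = x^{b-2}[(b-1)T - (2b-1)x^b]$, so $h$ is unimodal on $(0,\infty)$ and thus at most two-to-one; this forces the $x_i^*$ to take at most two distinct positive values.

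If all $x_i^*$ coincide, then $\xx^* = (1/n, \ldots, 1/n)$, giving the first alternative. Otherwise let $y < z$ be the two values, appearing with multiplicities $m$ and $n-m$ respectively. Starting from the relation $T(y^{b-1} - z^{b-1}) = y^{2b-1} - z^{2b-1}$ obtained from $h(y) = h(z)$, dividing by $z - y$, substituting $T = my^b + (n-m)z^b$, and collecting the resulting terms according to the powers of $y$ and $z$, a routine bookkeeping calculation produces the clean identity
\[A \cdot \bigl[(n-m-1)z^b + (m-1)y^b\bigr]\, =\, y^{b-1}z^{b-1}, \qquad A := \sum_{l=0}^{b-2} y^l z^{b-2-l}.\]

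The main obstacle is then pinning down the multiplicity $m = n-1$. My plan is to apply AM-GM twice: first $A \ge (b-1)(yz)^{(b-2)/2}$, obtained from the $b-1$ terms of $A$, and second $(n-m-1)z^b + (m-1)y^b \ge 2\sqrt{(m-1)(n-m-1)}\,(yz)^{b/2}$. Multiplying these bounds and comparing with the displayed identity forces $2(b-1)\sqrt{(m-1)(n-m-1)} \le 1$, and since $b \ge 2$ and $(m-1)(n-m-1)$ is a nonnegative integer, this forces $(m-1)(n-m-1) = 0$, i.e., $m \in \{1, n-1\}$. To exclude the case $m = 1$ for $n \ge 3$, I would plug $m = 1$ into the identity to get $(n-2)\sum_{l=0}^{b-2}y^l z^{b-1-l} = y^{b-1}$, and then use $z > y$ to bound the left-hand side from below by $(n-2)z^{b-1} \ge z^{b-1} > y^{b-1}$, a contradiction. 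Hence $m = n-1$, and cancelling $y^{b-1}$ in the identity yields precisely~\eqref{eq:nontriv}. For $n = 2$, the identity collapses to $0 = y^{b-1}z^{b-1}$, which has no positive solution, so no two-value critical point exists and the uniform vector $(1/2,1/2)$ is the unique maximizer.
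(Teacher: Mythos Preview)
Your proof is correct and follows the same broad outline as the paper (reduce to an interior critical point on the simplex, show the coordinates take at most two values, pin down the multiplicities), but the execution diverges in the key technical steps. For the two-value claim, the paper fixes $x_n = \max_i x_i$, writes $u = x_k/x_n$, and verifies that $(1-u^{2b-1})/(1-u^{b-1})$ is strictly increasing on $(0,1)$; your unimodality argument for $h(x) = Tx^{b-1} - x^{2b-1}$ is equivalent but a touch cleaner. The larger difference is how you rule out intermediate multiplicities: the paper expands $T\cdot A = \sum_{j=0}^{2b-2} y^j z^{2b-2-j}$ monomial by monomial and observes that for $s \le n-2$ the left side dominates the right term-by-term except for the single missing $(yz)^{b-1}$, which is beaten by the surplus $(n-s-1)z^{2b-2}$; this forces $s = n-1$ in one stroke. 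Your double AM--GM yielding $(m-1)(n-m-1)=0$ is slicker, at the cost of a separate elimination of $m=1$. One further asymmetry: the paper's proof is self-contained in that it establishes $\max f_{2,b}(\Lambda_n) = O(n^{2-2b})$ at the end, and hence that the overall supremum is attained; you instead import this from the parenthetical remark preceding the proposition. That is not circular (the remark is asserted for general $a,b$), but it means your argument depends on a fact the paper states without proof in general and only proves for $a=2$ within this very proposition.
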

\begin{proof}
  Denoting the partial derivative by a shorthand $(F)'_k = \frac{\partial}{\partial x_k}F(x_1, \ldots, x_{n-1})$, we have
\begin{align}
  \notag  (F)'_k &= b x_k^{b-1} \sum_{i \ne k} x_i^b - b x_n^{b-1} \sum_{i \ne n} x_i^b \\
  \label{eq:Fprime} &= b(x_k^{b-1} - x_n^{b-1}) \sum_{i=1}^n x_i^b - b(x_k^{2b-1} - x_n^{2b-1}) \,.
\end{align}
By symmetry we can assume that 
\begin{equation}
  \label{eq:xn_max}
  x_n = \max \{ x_1, \cdots, x_n\}.
\end{equation}
Let $S = \left\{ k \in [n] : x_k \ne x_n \right\}$. If $S = \emptyset$, then we get a stationary point $\left( 1/n, \cdots, 1/n \right)$. To determine the remaining stationary points, we claim that if $(F)'_k = 0$ for every $k \in S$, then all numbers $x_k, k \in S$ are equal. 

Let $A = \sum_{i=1}^n x_i^b$ and $B = x_n^b$. Then $(F)'_k = 0$, for $k \in S$, is equivalent to
\begin{equation*}
  \frac{x_k^{2b-1} - x_n^{2b-1}}{x_k^{b-1} - x_n^{b-1}} = A
\end{equation*}
Denoting $u = \frac{x_k}{x_n}$, this is equivalent to
\begin{equation}
\label{eq:equiv}
  \frac{1 - u^{2b-1}}{1 - u^{b-1}} = \frac{A}{B}.
\end{equation}
Since $x_k < x_n$ by \eqref{eq:xn_max}, we have that $u \in (0,1)$. By identity $1- u^n = (1 - u)(1 + \cdots + u^{n-1})$, the left hand side of \eqref{eq:equiv} equals 
\begin{equation*}
  \frac{1 - u^{2b-1}}{1 - u^{b-1}} = \frac{1 + \cdots + u^{2b-2}}{1 + \cdots + u^{b-2}} = 1 + u^{b-1} + \frac{u^{2b-2}}{1 + \cdots + u^{b-2}} = 1 + u^{b-1} + \frac{1}{\left( \frac{1}{u} \right)^{b} + \cdots + \left( \frac{1}{u} \right)^{2b - 2}},
\end{equation*}
which, as is not hard to check, is a strictly increasing function of $u$ on $(0,1)$. Therefore equation \eqref{eq:equiv} has a unique solution which implies that all $x_k, k \in S$ are equal.

Let $s = |S| \in [n-1]$. Let $y$ be the common value of $x_i, i \in S$ and $z$ be the common value of $x_i, i \notin S$. Then $(F)'_k = 0, k \in S$ in view of \eqref{eq:Fprime} and the identity $a^n - b^n = (a-b) \sum_{i=0}^{n-1} a^i b^{n-1-i}$ implies 
\begin{equation}
\label{eq:twopolys}
\left( s y^b + (n-s) z^b \right) \sum_{j = 0}^{b-2} y^{j}z^{b - 2 - j} = \sum_{i=0}^{2b-2} y^{i}z^{2b-2-i}.
\end{equation}
The polynomials (in variables $y, z$) on both sides of the equation are homogeneous of degree $2b-2$. The left-hand side has all coefficients at least one, except for the missing term $(yz)^{b-1}$. Hence the left-hand side minus the right-hand side is at least
\[
  (n-s-1)z^{2b-2} - (yz)^{b-1}.
\]
Whenever $s \le n-2$, the latter is positive (because $y < z$), giving a contradiction. Hence the only nontrivial stationary point (in addition to $(1/n, \cdots, 1/n)$) corresponds to $s = n-1$. In this case we have $(n-1)y + z = 1$ and \eqref{eq:twopolys} is equivalent to
\begin{equation*}
  (n-2) \sum_{j = 0}^{b-2} y^{b+j}z^{b-2-j} = (yz)^{b-1},
\end{equation*}
which, divided by $y^{b-1}$ is \eqref{eq:nontriv}.

Note that such stationary point has at least $n-1$ of the coordinates at most $1/n$, so we have
\begin{equation*}
  \max_{\xx \in \Lambda_n} f_{2,b}(\xx) = O(n^{2-2b}),
\end{equation*}
which implies that the supremum on the right-hand side of~\eqref{eq:supsupmax} is attained by some~$n$.
\end{proof}

After these preparations, we now (as announced) focus on the special case $a = b = 2$.
The stationary point $(1/n, \cdots, 1/n)$ gives value of $f_{2,2}$ equal to $(n)_2 n^{-4} = \frac{n-1}{n^3} =: \alpha_n$. It easily follows that $\max_n \alpha_n = \alpha_2 = \frac{1}{8}$.
Moreover, equation \eqref{eq:nontriv} becomes $(n-2)y = z$, which together with $(n-1)y + z = 1$ yields a stationary point
\begin{equation}
  \label{eq:sol_nontriv}
  \left(\frac{1}{2n-3}, \cdots, \frac{1}{2n-3}, \frac{n-2}{2n-3}\right) \in \Lambda_n \,,
\end{equation}
which gives value of $f_{2,2}$ equal to
\[
  2(n-1)\frac{(n-2)^2}{(2n-3)^4} + (n-1)_2 \frac{1}{(2n-3)^4} = \frac{(n-1)_2}{(2n-3)^3} =: \beta_n \,.
\]
It is easy to check that $\beta_2 = 0$ and $\beta_{n+1}/\beta_n \le 1$ for $n \ge 3$, hence
$  \max_n \beta_n = \beta_3 = \frac{2}{27} < \frac{1}{8}$.
We conclude that
\begin{equation}\label{eq:f22}
  \sup f_{2,2}(\Lambda) = f_{2,2}(1/2, 1/2) = 1/8.
\end{equation}

\fi
\ifdraft
\section{Unbalanced graphs}
\label{sec:unbal}
\begin{proposition}
    \label{prop:unbal}
		Given an unbalanced rooted graph~$H$ with $r$ root vertices~$R$, let~$G$ be the largest subgraph that attains the maximum in~$m_R(H) := \max_{G \subseteq H: R \subseteq V(G)} \frac{e_G}{v_G - r}$. For any edge probability $p=p(n)$ satisfying $p \ge n^{-1/m_R(H)}$, i.e., above the existence threshold, the following holds: 
		if for some sequence~$\alpha_n$ we have~${M_{G,n}/\alpha_n \pto 1}$, then ${M_{H,n}/(\alpha_n n^{v_H - v_G} p^{e_H - e_G}) \pto 1}$ holds. 
\end{proposition}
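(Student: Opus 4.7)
The plan is to decompose every $H$-extension of a vertex~$v$ into a $G$-extension ``core''~$\psi$ of~$v$ plus a completion along the remaining edges~$E(H)\setminus E(G)$. For each $G$-extension~$\psi$ of~$v$ in~$\Gnp$, let
\[
  Y_\psi \,:=\, \bigabs{\{\phi:V(H)\to[n]:\phi\text{ injective},\ \phi|_{V(G)}=\psi,\ \phi(E(H)\setminus E(G))\subseteq E(\Gnp)\}}
\]
be the number of ways to complete~$\psi$ to a full $H$-extension of~$v$. This yields the exact identity $X_{H,v}=\sum_{\psi}Y_\psi$ with the sum over all $G$-extensions~$\psi$ of~$v$, so the goal reduces to showing that $Y_\psi = (1+o(1))\mu$ whp uniformly over all such~$\psi$, where $\mu:=n^{v_H-v_G}p^{e_H-e_G}$ is the (common) mean of~$Y_\psi$.

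The key step is to apply a Spencer-type extension theorem~\cites{S90,SW19} to the rooted graph~$(H\setminus E(G),\,V(G))$ with~$v_G$ many roots. By maximality of~$G$, every subgraph~$F\subseteq H\setminus E(G)$ with $V(G)\subseteq V(F)$ and~$F\cup G\subsetneq H$ satisfies $e_F/(v_F-v_G)<m_R(H)$ (by rearranging the defining inequality ${(e_F+e_G)/(v_F-r)<e_G/(v_G-r)}$); combined with the hypothesis $p\ge n^{-1/m_R(H)}$ this gives $n^{v_F-v_G}p^{e_F}\ge n^\eps$ for a fixed $\eps>0$, so the minimum subgraph mean of $(H\setminus E(G),V(G))$ is a positive power of~$n$ and in particular $\gg\log n$. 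The extension theorem then yields, whp and uniformly over all injective $v_G$-tuples~$\tau\in[n]^{v_G}$, that the corresponding extension count is $(1+o(1))\mu$. Specialising to $\tau=\psi(V(G))$ for each $G$-extension~$\psi$ gives the desired uniform concentration~$Y_\psi=(1+o(1))\mu$.

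Plugging this into the decomposition yields $X_{H,v}=(1+o(1))\mu\cdot X_{G,v}$ whp uniformly in~$v\in[n]$. Taking maxima over~$v$ and invoking the hypothesis $M_{G,n}/\alpha_n\pto 1$ then gives $M_{H,n}/(\alpha_n\mu)\pto 1$, as required.

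The main technical obstacle is the uniform concentration of~$Y_\psi$ in the case where $(H\setminus E(G),V(G))$ is balanced but not strictly balanced (this occurs, for example, when $H\setminus E(G)$ consists of several disjoint balanced ``arms'' of equal density attached to~$V(G)$). In that case the extension theorem must be applied to each strictly balanced block of $H\setminus E(G)$ relative to~$V(G)$ separately, and the resulting counts multiplied together; this is routine but requires careful bookkeeping. A minor additional point is the negligible loss from completions~$\phi$ that fail to be injective because a non-root vertex of~$H$ is mapped into~$\psi(V(G))$, which contributes only~$O(\mu/n)$ per~$\psi$ and is absorbed into the~$o(1)$ error.
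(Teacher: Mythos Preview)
Your approach is essentially the same as the paper's: decompose $X_{H,v}$ as a sum over $G$-extensions and invoke the extension count concentration from \cite{SW19} for the rooted graph $(H\setminus E(G),V(G))$, using maximality of $G$ to verify that the minimum subgraph mean is $n^{\Omega(1)}$.

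Two small corrections. First, the density inequality $e_F/(v_F-v_G)<m_R(H)$ is needed for \emph{all} $F$ with $e_F>0$, including $F\cup G=H$; your restriction $F\cup G\subsetneq H$ is an oversight, but the same rearrangement goes through for $F\cup G=H$ because $H$ is unbalanced and $G$ is the \emph{largest} maximizer. Second, the ``main technical obstacle'' you flag is not actually an obstacle: the $1$-statement of \citeref{SW19}{Theorem~4} applies to arbitrary rooted graphs (not only strictly balanced ones) once $\Phi_{G,H}:=\min_{G\subsetneq F\subseteq H}\mu_{G,F}$ is a positive power of~$n$, which is exactly what your density argument establishes. So there is no need to decompose $H\setminus E(G)$ into strictly balanced blocks; the paper invokes \cite{SW19} directly in one step.
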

\begin{proof}
Let $H \setminus G$ be a rooted graph obtained from $H$ by removing the edges of $G$ and treating the vertices of $V(G)$ as the root and recall that $\mu_{G,H}$ is the expected number of copies of $H \setminus G$ (without multiplicity) rooted at a $v_G$-tuple, in particular
\[
\mu_{G,H} \sim \frac{n^{v_H - v_G}p^{e_H - e_G}}{\aut(H \setminus G)},
\]
where~$\aut(H \setminus G)$ in our notation is the same as $\aut(G, H)$ in~\cite{SW19}.
Using maximality of~$G$, it is easy to show that every subgraph of~$H$ that properly contains~$G$ satisfies 
\[
\frac{e_J - e_G}{v_J - v_G} < m(H)
\]
The maximum of the LHS of the latter inequality over all subgraphs~$G \subsetneq J \subseteq H$ is denoted by $m(G,H)$ in~\cite{SW19}, and so we have~$p \ge n^{-1/m(H)} \ge n^{-1/m(G,H) + \eta}$ for some constant~${\eta > 0}$.
Therefore Remark~1(iv) in~\cite{SW19} implies that $\Phi := \min_{G \subseteq F \subseteq H: e_F > e_G} \mu_{G,F} = n^{\Omega(1)}$, which means that the condition of $1$-statement in \cite{SW19}*{Theorem~4} is satisfied for some $\eps = \eps_n \to 0$, implying that whp in $\Gnp$ at every $v_G$-tuple there are $(1 \pm \eps)\mu_{G,H}$ copies of~$H \setminus G$ without multiplicity and therefore~$(1 + o(1))n^{v_H - v_G}p^{e_H-e_G}$ copies with multiplicities. 
A simple counting argument (first counting rooted copies of~$G$, and then extensions of each such copy to rooted copies of~$H$) readily establishes the conclusion of Proposition~\ref{prop:unbal}. 
\end{proof}
%}
\fi

\end{document}